\newtheorem{thm}{Theorem}[section]
\newtheorem{prop}[thm]{Proposition}
\newtheorem{cor}[thm]{Corollary}
\newtheorem{lem}[thm]{Lemma}
\newcommand{\cD}{{\mathcal{D}}}
\newcommand{\cS}{{\mathcal{S}}}
\newcommand{\cR}{{\mathcal{R}}}
\newcommand{\cH}{{\mathcal{H}}}
\newcommand{\cL}{{\mathcal{L}}}
\newcommand{\cA}{{\mathcal{A}}}
\newcommand{\cC}{{\mathcal{C}}}
\newcommand{\cF}{{\mathcal{F}}}
\newcommand{\cW}{{\mathcal{W}}}
\newcommand\CC{\mathbb{C}}
\newcommand\TT{\mathbb{T}}
\newcommand\DD{\mathbb{D}}
\newcommand{\geqsim}{\,\raisebox{-0.6ex}{$\buildrel > \over \sim$}\,}
\begin{document}

\title[Trace estimates of Toeplitz operators and applications]{Trace estimates of Toeplitz operators on Bergman spaces and applications to composition operators}

 \author{O. EL-Fallah}
 \address{Mohammed V University in Rabat, Faculty of sciences, CeReMAR -LAMA- B.P. 1014 Rabat, Morocco}
 \email{omar.elfallah@gmail.com; o.elfallah@um5r.ac.ma}
 
 \thanks{Research partially supported by "Hassan II Academy of Sciences and Technology" for the first author.}
 
  \author{M. El Ibbaoui}
 \address{Mohammed V University in Rabat, Faculty of sciences, CeReMAR -LAMA- B.P. 1014 Rabat, Morocco}
 \email{elibbaoui@gmail.com}
 








\keywords
{Bergman spaces, Fock spaces, Hardy space, Toeplitz operators, Composition operators, Univalent functions, Harmonic measures.}

\subjclass[2010]{47B06, 47B33, 47B35, 30H10, 30H20}

\begin{abstract}
Let $\Omega$ be a subdomain of  $\CC$ and let $\mu$ be a positive Borel measure on $\Omega$. In this paper, we study the asymptotic behavior of  the eigenvalues of  compact Toeplitz operator $T_\mu$ acting on Bergman spaces on $\Omega$. Let $(\lambda _n(T_\mu))$ be the decreasing sequence of the eigenvalues of $T_\mu$ and let $\rho$ be an increasing function such that $\rho (n)/n^A$ is decreasing for some $A>0$. We give an explicit necessary and sufficient geometric condition on $\mu$ in order to have $\lambda _n(T_\mu)\asymp 1/\rho (n)$. 
As applications, we consider composition operators $C_\varphi$, acting on some standard analytic spaces on the unit disc $\DD$. First, we give a general criterion ensuring that the singular values of $C_\varphi$ satisfy $s_n(C_\varphi ) \asymp 1/\rho(n)$. Next, we focus our attention on composition operators with univalent symbols, where we express our general criterion in terms of the harmonic measure of $\varphi (\DD)$. We finally study the case where $\partial \varphi (\DD)$ meets the unit circle in one point and give several concrete examples. Our method is based on upper and lower estimates of the trace of $h(T_\mu)$, where $h$ is suitable concave or convex functions.
\end{abstract}

\maketitle


\section{Introduction}
Spectral properties of Toeplitz operators associated with positive measures  play an important role in spectral theory of several operators: Hankel operators, composition operators and integration operators.  In this paper, we are interested in the  behavior of the eigenvalues of compact Toeplitz  operators acting on analytic spaces on a subdomain $\Omega$ of $ \CC$ with applications to composition operators.\\
Let $\Omega$ be a domain of $\CC$. We denote by  $H(\Omega)$ the class of all holomorphic functions on   $\Omega $. Let $\omega:\ \Omega \to (0,\infty)$ be a continuous weight on $\Omega$. The weighted Bergman space associated with $\omega$ is given by 
$$
\cA^2_{\omega}= \{ f \in H (\Omega):\ \ \| f \| _\omega = \left( \displaystyle \int _\Omega |f(z)|^2dA_\omega(z) \right)^{1/2} <\infty \},
$$
where $dA_\omega (z) = \omega ^2(z)dA(z)$ and $dA$ is the Lebesgue measure on $\CC$.\\
Clearly, $\cA ^2 _\omega$ is a reproducing kernel space. The reproducing kernel of $\cA ^2_\omega$ will be denoted by $K$ ( or $K^\omega$ if necessary).\\
In this paper, we call the standard Bergman spaces, denoted by $\cA^2_\alpha$, the Bergman spaces on $\DD$ associated with $\omega ^2(z):= \frac{\alpha+1}{\pi}(1-|z|^2)^\alpha$, where $\alpha >-1$. The standard Fock spaces $\cF^2_\alpha$ corresponds to $\Omega = \CC$ and $\omega ^2(z) =\frac{\alpha}{\pi} e^{-\alpha |z|^2}$, where $\alpha >0$.\\
The Toeplitz operator $T_\mu$, acting on $\cA ^2_\omega$, induced by a positive Borel measure $\mu$ on $\Omega$ is given by
$$
T_\mu (f) (z) = \displaystyle \int _\Omega f(\zeta)K (z, \zeta)\omega ^2(\zeta)d\mu(\zeta).
$$

The boundedness, compactness and membership to Schatten classes of Toeplitz operators were studied in several papers (see for instance \cite {Has, OP, Lue, LR, IZ, AP, SY, EMMN}). It is proved, under some regularity conditions on $\omega$, that $T_\mu$ is bounded (resp. compact) if and only if $\mu (R_n)/A(R_n)=O(1)$ (resp. $o(1)$), where $(R_n)$ is a suitable lattice of $\Omega$ with respect to $\omega$.\\
\indent Our goal in this paper is to study the asymptotic behavior of the eigenvalues of compact Toeplitz operators on $\cA^2_\omega$. First, we fix some notations. The class of weights on $\Omega$ considered in this paper, denoted by $\cW (\Omega)$, contains all standard weights. Some examples are listed in section \ref{Pre}.
For  $\omega \in \cW (\Omega)$,  we associate  $\cL _\omega $, which consists of suitable lattices  of $\Omega$, with respect to $\omega$, the  definitions of $\cW$  and $\cL _\omega$ are given in section \ref {Pre}.\\
\indent Throughout this paper we suppose that $T_\mu$ is compact. The decreasing sequence of the eigenvalues of $T_\mu$ will be denoted by $(\lambda _n(T_\mu))$. It is proved in \cite {EMMN}, that $ \lambda _n(T_\mu)= O(1/\log ^\gamma (n))$ for some $\gamma >0$ if and only if there exists $c>0$ such that
$$
\displaystyle \sum _n \exp \left ( -c \left (\frac   {A(R_n)}{\mu (R_n)} \right ) ^{1/\gamma} \right )<\infty,
$$
for some $(R_n)_n \in \cL _\omega$.\\
In this paper we are interested in compact Toeplitz operators, $T_\mu$, such that $1/\lambda _n(T_\mu) = O(n^A)$ for some $A>0$.\\

 Recall that since $T_\mu$ is compact, we have $ \displaystyle \lim _{n\to +\infty}\mu (R_n)/A(R_n) =0.$
Let $(R_n(\mu))$ be an enumeration of $(R_n)$ such that the sequence $
a_n(\mu): = \mu (R_n(\mu))/A(R_n(\mu)),
$
is decreasing. First, we will prove the following result.\\

\indent {\bf Theorem A.}
{\it
 Let $(R_n)\in \cL _\omega$, where  $\omega \in \cW$. Let  $\rho: [1,+\infty ) \to (0,+\infty )$ be an increasing function such that $\rho (x)/x^A$ is decreasing for some $A>0$.
Let $\mu $ be a positive Borel measure on $\Omega $ such that $T_\mu$ defines a compact operator on $\cA ^2_\omega$ . Then
\begin{enumerate}
\item [(1)] $\lambda _n(T_\mu) =O\left ( 1/\rho (n) \right )  \iff  a_n(\mu) = O \left (1/\rho (n) \right )$.\\
\item [(2)] $\lambda _n(T_\mu) \asymp 1/\rho (n)  \iff  a_n(\mu) \asymp 1/\rho (n)$.\\
\end{enumerate} 
}
A preliminary version of this theorem, in the case of standard Bergman spaces of the unit disc, was announced in \cite {EE}. Before going on, two remarks on Theorem A are in order.\\
\begin{itemize}
\item [(i)] The growth condition on $\rho$ is, in some sense, necessary. Indeed, let 
 $\rho$ be an increasing function such that $\rho (x) = o (\rho (2x))$ when $x\to +\infty$. One can construct  (see Subsection \ref {Remarks 2}) a Toeplitz operator $T_\mu$ such that for any lattice $(R_n)_n$ we have 
$$
 \displaystyle \limsup _{n\to \infty}\frac{\lambda _{n}(  T_\mu ) }  { a_{n}(\mu)} = +\infty.
$$
where $a_{n}(\mu)$ is the decreasing rearrangement of $(\mu (R_n) /A(R_n))$.\\
\item [(ii)] In general, the sequence $(a_n(\mu))_n$ is not sufficient to give asymptotic estimates of $( \lambda _n(T_\mu))_n$. Indeed, one can construct two positive Borel measures $\mu$ and $\nu$ on the unit disc $\DD$ such that 
$$a_n(\mu) = a_n(\nu)\quad \mbox{and}\quad  \displaystyle \limsup _{n\to \infty}\lambda _n(T_\mu) /\lambda _n(T_\nu) = \infty.$$
\end{itemize}
Next, we analyze the connection between the behavior of the eigenvalues of $T_\mu$ and the behavior of the Berezin transform of $T_\mu$. Recall that the Berezin transform of a Toeplitz operator $T_\mu$ acting on $\cA^2_\omega$ is given by 
$$
{\tilde \mu}(z)= \frac{  \langle T_\mu K_z,  K_z  \rangle  }{\| K_z\| ^2}, \quad (z\in \Omega).
$$
Let $(R_n)_{n\geq 1} \in \cL _\omega$ and let $z_n$ be the center of $R_n$.
It is known that $T_\mu $ is compact if and only if 
$$\displaystyle \lim _{n\to \infty}{\tilde \mu }(z_n)=0.$$
As before, let $(z_n(\mu))$ be an enumeration of $(z_n)$ such that  the sequence $(b_n(\mu))_n$, defined by 
$$b_n(\mu):= {\tilde \mu}(z_n(\mu)),$$
is decreasing.\\

First, we consider Toeplitz operators $T_\mu $ such that $1/\lambda _n(T_\mu) = O(n^\gamma)$ for some $\gamma \in (0,1)$. We have the following\\

{\bf Theorem B.}{\it \ 
Let $\omega \in \cW$ and let $\mu$ be a positive Borel measure on $\Omega$ such that $T_\mu$ is compact. Let $\rho : [1,+\infty) \to (0,+\infty)$ be an increasing positive function such that $\rho (x)/x^\gamma$ is decreasing for some $\gamma \in (0,1)$. We have
$$
\lambda _n(T_\mu) \asymp 1/\rho (n)\quad \iff \quad b_n(\mu) \asymp 1/\rho (n).
$$
}\\
\indent The case  $\lambda _n(T_\mu)  \lesssim 1/n^A $ for some $A>1$, is rather different. Indeed, to have a description of the behavior of the eigenvalues of such Toeplitz operators in terms of $(b_n(\mu))$ it  is  necessary  that 
\begin{equation}\label {NCB}
C_p(\cA^2_\omega, (R_n) ):= \sup _{n\geq 1}\displaystyle \sum _{j\geq 1}{\tilde \nu} ^p _n(z_j) < \infty,\quad (p\in (0,1)),
\end{equation}
where $d\nu _n = dA_{|R_n}$ (see Theorem \ref {Berezinconcave}).\\

We will  prove the following converse.\\

{\bf Theorem C.}{\it \ Let $\omega \in \cW,\ (R_n)_n \in \cL _\omega$. Let $\mu$ be a positive Borel measure on $\Omega$ such that $T_\mu$ is compact. Suppose that $C_p(\cA^2_\omega , (R_n)_n)<\infty$ for all $p\in (0,1)$. Let $\rho : [1,+\infty) \to (0,+\infty)$  be an increasing positive function satisfying  $\rho (t)/t^\gamma$ is increasing for some $\gamma >1$ and $\rho (t)/t^\beta$ is decreasing for some large $\beta$. Then we have
$$
\lambda _n(T_\mu) \asymp 1/\rho (n)\quad \iff \quad b_n(\mu) \asymp 1/\rho (n).
$$
}
The proofs of these theorems are based on  upper and lower estimates of the trace of $h(T_\mu)$ for convex and concave functions $h$.\\

As application, we consider composition operators on $\cH_\alpha = \{f \in H(\DD): \  f' \in \cA^2_\alpha \}$, which was the original motivation of this work. Let $\varphi $ be an analytic self map of $\DD$. The composition operator on $\cH _\alpha$ induced by a symbol $\varphi$ is defined by 
$$C_\varphi (f)= f\circ \varphi, \quad (f\in \cH _\alpha).$$

Using Theorem A and a standard connection between composition operators and Toeplitz operators, we give estimates of the singular values $s_n(C_\varphi, \cH_\alpha)$ of general composition operators $C_\varphi$,  when $1/s_n(C_\varphi, \cH_\alpha)$ doesn't increase faster than all polynomials.
These estimates are given in terms of the mean values of generalized counting function associated with $\varphi$. \\
We also express these estimates in terms of the harmonic measure of $\varphi (\DD)$, when $\varphi$ is univalent and $\varphi (\DD)$ is a Jordan domain.\\

 Next, we consider composition operators induced by  univalent symbol $\varphi$ such that $\partial \varphi (\DD) \cap \partial \DD$ is reduced to one point. Namely,
We suppose that $\partial \varphi (\DD)$  has, in a neighborhood of +1, a polar equation $1-r=\gamma(|\theta|)$, where $\gamma:]0,\pi]\rightarrow]0,1]$ is a differential increasing function with $\gamma(0)=0$,  and satisfying the following conditions
\begin{equation}\label{g1}
\frac{\gamma (t)}{t} \ \mbox{is increasing}\ ,\quad  \gamma '(t)= O(\gamma (t)/t)\ \ (t\to 0^+),
\end{equation}
and
\begin{equation}\label{g3}
 \gamma (t) = O\left (t/\log ^\beta(1/t)\right )\ \ \text{for some}\ \beta > 1/2.\\
\end{equation}
\\
\indent Recall that by Tsuji-Warschwski's theorem, (see \cite {Sha}),  $C_\varphi$ is compact if and only if
\begin{equation*}\label{TW}
\displaystyle \int _0\frac{\gamma(s)}{s^2}ds =\infty.
\end{equation*}
\indent It is proved in \cite {EEN} that the composition operator  $C_\varphi $ on ${\mathcal {H}}_{\alpha}$ is in $p$-Schatten class ($p>0$) if and only if 
\begin{equation}\label{EEN}
\displaystyle \int _0 \frac{e^{-\frac{p\alpha}{2}\Gamma(t)}}{\gamma (t)}dt <\infty,
\end{equation}
where
\begin{equation*}\label{compactnessfunction}
  \Gamma(t)=\frac{2}{\pi}\int_{t}^1 \frac{\gamma(s)}{s^{2}}\,ds.
\end{equation*}

We have the following result.\\

{\bf Theorem D.} {\it  Let $\alpha>0$ and let $\Omega, \gamma, \varphi$ as before.  Suppose that $\displaystyle \int _0 \frac{\gamma (t)}{t^2}dt = \infty$. We have 
\begin{itemize}
\item [(1)]If $\displaystyle \lim _{t\to 0^+}\frac{\gamma(t)\log (1/t)}{t} =\infty $, then
$$   s_n(C_\varphi, \cH_\alpha) = O(1/n^A)\quad \mbox{for all } \ A>0.$$
\item [(2)]If $\displaystyle \frac{\gamma(t)\log (1/t)}{t} = O(1)$, then 
$$
s_n(C_\varphi , \cH _\alpha) \asymp \exp \left (-\frac{\alpha}{2}\Gamma (x_n)\right),
$$
where $x_n$ is given by $\displaystyle \int _{x_n}^2\frac{dt}{\gamma (t)}=n$.\\
\end{itemize}
}
As examples, we obtain
\begin{cor} \label {sing} With the same notations as above, we have
\begin{enumerate}
\item If $\gamma (t)= \kappa t/{\log (e/t)}$ with $\kappa >0$, then 
$$s_n(C_\varphi , \cH _\alpha ) \asymp \frac{1}{n^{{\alpha \kappa }/{2\pi }}}.$$
\item If $\gamma (t) = \kappa {t}/{\log (e/t)\log \log (e^2/t)}$ with $\kappa  >0$, then 
$$s_n(C_\varphi , \cH _\alpha ) \asymp \frac{1}{(\log n)^{\alpha \kappa /\pi} }.$$
\end{enumerate}
\end{cor}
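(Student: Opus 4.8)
The plan is to derive both estimates from part (2) of Theorem D, so the first task is to check that each $\gamma$ falls under its hypotheses. For both symbols the compactness condition $\int_0 \gamma(t)/t^2\,dt=\infty$ holds: the substitution $u=\log(e/t)$ turns the integrand into $\kappa/u$ in case (1) and $\kappa/\big(u\log(1+u)\big)$ in case (2), and both integrals diverge at $+\infty$. The structural requirements \eqref{g1}--\eqref{g3} are routine ($\gamma(t)/t$ is the reciprocal of a product of logarithms, hence increasing; $\gamma'(t)=O(\gamma(t)/t)$; and $\gamma(t)=O(t/\log^\beta(1/t))$ with $\beta=1$). Moreover $\gamma(t)\log(1/t)/t\to\kappa$ in case (1) and $\to 0$ in case (2), so in both cases $\gamma(t)\log(1/t)/t=O(1)$ and we are in the regime of Theorem D(2). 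It then remains to evaluate $\exp(-\tfrac{\alpha}{2}\Gamma(x_n))$, with $x_n$ defined by $\int_{x_n}^2 dt/\gamma(t)=n$, and the computational engine throughout will be the change of variable $u=\log(e/t)$, under which $\log\log(e^2/t)=\log(1+u)$.

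For case (1) this substitution gives $\Gamma(t)=\frac{2\kappa}{\pi}\int_1^{\log(e/t)}du/u=\frac{2\kappa}{\pi}\log\log(e/t)$ exactly, while the defining equation for $x_n$ becomes $\frac{1}{2\kappa}\big((\log(e/x_n))^2-(\log(e/2))^2\big)=n$, i.e. $(\log(e/x_n))^2=2\kappa n+O(1)$. Substituting, $\Gamma(x_n)=\frac{\kappa}{\pi}\log\!\big(2\kappa n+O(1)\big)=\frac{\kappa}{\pi}\log(2\kappa n)+o(1)$, whence $\exp(-\tfrac{\alpha}{2}\Gamma(x_n))=(2\kappa n)^{-\alpha\kappa/(2\pi)}e^{o(1)}\asymp n^{-\alpha\kappa/(2\pi)}$, which is the claimed estimate.

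For case (2) the same substitution yields $\Gamma(t)=\frac{2\kappa}{\pi}\int_1^{\log(e/t)}\frac{du}{u\log(1+u)}$ and $\int_{x_n}^2 dt/\gamma(t)=\frac{1}{\kappa}\int_{\log(e/2)}^{\log(e/x_n)}u\log(1+u)\,du=n$. I would evaluate these by comparison with the elementary antiderivatives of $1/(u\log u)$ and $u\log u$: since $\log(1+u)=\log u+O(1/u)$, one gets $\int_1^U\frac{du}{u\log(1+u)}=\log\log U+O(1)$ and $\int^U u\log(1+u)\,du=\frac{U^2}{2}\log U\,(1+o(1))$. Writing $U=\log(e/x_n)$, the first identity gives $\Gamma(x_n)=\frac{2\kappa}{\pi}\log\log U+O(1)$, and the second gives $U^2\log U=2\kappa n\,(1+o(1))$. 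Taking logarithms in the last relation, $2\log U+\log\log U\sim\log n$, so $\log U\sim\tfrac12\log n$; in particular $\log U\asymp\log n$. Feeding this back, $\exp(-\tfrac{\alpha}{2}\Gamma(x_n))=(\log U)^{-\alpha\kappa/\pi}e^{O(1)}\asymp(\log n)^{-\alpha\kappa/\pi}$, as claimed.

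The only delicate point is the two-sided bookkeeping in case (2): Theorem D gives a genuine $\asymp$, so every approximation must be controlled multiplicatively rather than merely to leading order. Concretely, the $O(1)$ error in $\Gamma$ contributes only a bounded exponential factor, and the passage from $U^2\log U\asymp n$ to $\log U\asymp\log n$ must be justified as a two-sided comparison (the term $\log\log U$ being lower order, so $2\log U\asymp\log n$), guaranteeing $(\log U)^{-\alpha\kappa/\pi}\asymp(\log n)^{-\alpha\kappa/\pi}$. I expect this asymptotic inversion of $U^2\log U\asymp n$, kept at the level of $\asymp$, to be the main—though entirely elementary—obstacle.
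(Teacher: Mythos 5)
Your proposal is correct and follows exactly the route the paper intends: the corollary is stated as an immediate consequence of Theorem D(2), and your verification of the hypotheses together with the substitution $u=\log(e/t)$ to compute $\Gamma$ and $x_n$ reproduces the intended calculation, with the exponents $\alpha\kappa/2\pi$ and $\alpha\kappa/\pi$ coming out correctly. The two-sided bookkeeping you flag in case (2) is handled exactly as you describe (the additive $O(1)$ in $\Gamma$ gives a bounded multiplicative factor, and $\log U\sim\tfrac12\log n$ suffices for the final $\asymp$), so there is no gap.
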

The article is organized as follows: In section 2, we recall some classical results on compact operators and introduce the weighted Bergman spaces considered throughout this paper. In Section 3, we show how to obtain estimates of the eigenvalues of a compact operator from trace estimates. Section 4 is devoted to proving the estimates of $ tr (h (T_\mu)) $ where $ h $ satisfies some concave/convex conditions. It is important to note that the proof presented in this paper, in particular in the concave case, is different from luecking's proof \cite{Lue} and does not require off-diagonal kernel estimates. This section contains the proof of Theorem A. In Section 5, we study  the behavior of the eigenvalues of $T_\mu$ in terms of its Berezin transform. In section 6, we consider composition operators $C_\varphi$ with general symbol $\varphi$ and give estimates of the singular values of $C_\varphi$ in terms of the generalized Nevanlinna function associated with $\varphi$. Section 7, is devoted to composition operators with univalent symbols. We express the asymptotic behavior of the singular values of $C_\varphi$ in terms of the harmonic measure of $\varphi (\DD)$ and we give explicit examples. In the last section we consider examples of composition operators acting on the Hardy space and on the classical Dirichlet space.\\

\noindent {\bf Notations:} Throughout this paper, we will use the following notations
\begin{itemize}
\item $x\lesssim y$ if there exists a constant $C>0$ such that $x\leq Cy$.
\item $x\asymp y$ if $x\lesssim y$ and $y \lesssim x$.
\item $C(x_1,..,x_n)$ is a constant which depends on $x_1,..,x_n$.
\end{itemize}


\section{Preliminaries}\label {Pre}

\subsection{Compact operators}
Let $H$ be a complex Hilbert space and let $T$ be a bounded operator on $H$. The class of compact operators on $H$ will be denoted $\cS _\infty$ or ($S_\infty (H)$ if necessary). Let $T \in \cS _\infty$. The sequence $(s_n(T))_{n\geq 1}$ (or $(s_n(T,H))_n$) denotes the non increasing sequence of eigenvalues of $(T^*T)^{1/2}$. If $T$ is positive $(s_n(T))_{n\geq 1}$ is the sequence of eigenvalues of $T$ and we write in this case  $s_n(T)=\lambda _n(T)$.\\

By the spectral decomposition of compact operators, every compact operator  $T$ on $H$ can be written as follows
$$
Tf= \displaystyle \sum _{n}s _n (T) \langle f, f_n\rangle g_n, \quad (f\in H),
$$
where $(f_n)$ and $(g_n)_{n\geq 1}$ are  orthonormal systems of $H$.\\
 So, it is easy to see that
$$
s _n(T) =\inf \{ \| T-R\|,\ \dim R(H) < n\}.
$$
In particular, if $T$ and $S$ are two compact operators such that $T= XS$, where $X$ is a contraction, then 
$$ 
s _n(T)\leq s _n(S), \ (\mbox{for all }\ n \geq 0).
$$
Recall that a compact operator $T$ on $H$ belongs to the $p-$Schatten class $\cS_p$ (for $p>0$) if 
$$
\| T\|_p := \left ( \displaystyle \sum _{n\geq 1} s_n(T)^p\right )^{1/p} <\infty.
$$
 The following  result is known as the monotonicity Weyl's Lemma. 
\begin{lem}\label {operator}
Let $T,S$ be two positive bounded operators on a complex Hilbert space $H$ such that $T\leq S$. If $S$ is compact, then $T$ is compact and $\lambda _n(T) \leq \lambda _n (S)$ for all $n\geq 1$.
\end{lem}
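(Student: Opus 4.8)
The plan is to reduce everything to the positive square roots $T^{1/2}$ and $S^{1/2}$ and then invoke the ideal property recorded just above the lemma, namely that composing a compact operator with a contraction does not increase its singular values. The point of passing to square roots is that the order relation $T\leq S$ translates into a norm domination for $T^{1/2}$ and $S^{1/2}$, which is exactly the hypothesis of the factorization lemma of Douglas.

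First I would observe that $T\leq S$ means precisely $\|T^{1/2}f\|^2=\langle Tf,f\rangle\leq\langle Sf,f\rangle=\|S^{1/2}f\|^2$ for every $f\in H$. By Douglas's factorization lemma there is then a contraction $X$ on $H$ with $T^{1/2}=XS^{1/2}$; concretely one sets $X(S^{1/2}f):=T^{1/2}f$ on the range of $S^{1/2}$, the displayed norm inequality shows this is well defined and contractive, and one extends $X$ by continuity to $\overline{\mathrm{ran}\,S^{1/2}}$ and by zero on its orthogonal complement. Next, since $S$ is positive and compact, the spectral theorem gives that $S^{1/2}$ is compact with $\lambda_n(S^{1/2})=\lambda_n(S)^{1/2}$. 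As $T^{1/2}=XS^{1/2}$ with $X$ a contraction, the ideal property forces $T^{1/2}$ to be compact, hence $T=(T^{1/2})^2$ is compact; and the same property yields $s_n(T^{1/2})=s_n(XS^{1/2})\leq s_n(S^{1/2})$ for all $n$. Since $T^{1/2}$ and $S^{1/2}$ are positive, their singular values are their eigenvalues, so this reads $\lambda_n(T)^{1/2}\leq\lambda_n(S)^{1/2}$, giving $\lambda_n(T)\leq\lambda_n(S)$ after squaring.

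The only genuinely nontrivial ingredient is the factorization $T^{1/2}=XS^{1/2}$ through a contraction; everything else is bookkeeping with the spectral theorem and the ideal property already stated in the excerpt. I expect this to be the main (though standard) obstacle, in that one must either cite Douglas's lemma or carry out the short well-definedness/contractivity verification sketched above.

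An alternative route that avoids Douglas's lemma is to establish compactness of $T$ separately and then use the variational formula $\lambda_n(T)=\max_{\dim V=n}\ \min_{f\in V,\ \|f\|=1}\langle Tf,f\rangle$: for a fixed $n$-dimensional subspace $V$, if $f_V$ realizes the minimum for $S$ on $V$, then $\min_{f\in V}\langle Tf,f\rangle\leq\langle Tf_V,f_V\rangle\leq\langle Sf_V,f_V\rangle=\min_{f\in V}\langle Sf,f\rangle$, and taking the maximum over all such $V$ yields $\lambda_n(T)\leq\lambda_n(S)$ directly. I would nonetheless prefer the square-root argument, since it produces both compactness and the eigenvalue bound simultaneously from the single tool the authors have just highlighted.
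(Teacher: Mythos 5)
Your proof is correct and complete: the reduction to square roots, the Douglas-type factorization $T^{1/2}=XS^{1/2}$ through a contraction, and the appeal to the approximation-number formula $s_n(T)=\inf\{\|T-R\|:\dim R(H)<n\}$ together yield both the compactness of $T$ and the inequality $\lambda_n(T)\leq\lambda_n(S)$. There is nothing to compare against, however, since the paper states this lemma as the classical ``monotonicity Weyl's Lemma'' and gives no proof; either of your two routes (the factorization argument or the min--max characterization, the latter supplemented by a separate compactness argument such as $\langle Tf_k,f_k\rangle\leq\langle Sf_k,f_k\rangle\to 0$ along weakly null sequences) is a standard and valid derivation.
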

We will also need the following  general result \cite {Rot}.
\begin{lem}\label {Traceinequality}
Let $(T_n)_{n\geq 1}$ be a sequence of positive compact operators on a Hilbert space $H$ and let $T= \displaystyle \sum _{n\geq 1}T_n$ (with norm-operator convergence). Let $h: [0,+\infty) \to [ 0,+\infty )$ be an increasing function  such that $h(0)=0$. Then 
\begin{itemize}
\item [(1)] If $h$ is convex, then $
 \mbox { Tr}  \left (h(T) \right )\geq \displaystyle \sum _n \mbox{Tr}  (h(T_n)).
$
\item[(2)] If $h$ is concave, then
$
\mbox{Tr}   \left (h(T) \right )\leq \displaystyle \sum _n \mbox{Tr}  (h(T_n)).
$
\end{itemize}
\end{lem}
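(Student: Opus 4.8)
The plan is to reduce the statement to a single elementary two-operator inequality for the truncated trace functional $N_s(X):=\mathrm{Tr}(X-s)_{+}$ (with $X\ge 0$ compact and $s>0$), and then to recover the general statement by an integral representation of $h$, an induction, and a monotone limit. Throughout, all traces are read as elements of $[0,+\infty]$: for positive compact $X$ with eigenvalues $(\lambda_k(X))_k$ one has $\mathrm{Tr}(h(X))=\sum_k h(\lambda_k(X))$, so every inequality below is between non-negative (possibly infinite) sums and no trace-class hypothesis is needed.

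First I would record the two integral representations valid for increasing $h$ with $h(0)=0$: if $h$ is concave there is a positive measure $\kappa$ on $(0,+\infty]$ with $h(x)=\int \min(x,t)\,d\kappa(t)$ (convention $\min(x,+\infty):=x$), while if $h$ is convex there is a positive measure $\nu$ on $[0,+\infty)$ with $h(x)=\int (x-t)_{+}\,d\nu(t)$ (the atom at $0$ carrying the linear part). Both come from $h(x)=\int_0^x h'(u)\,du$ and Fubini. Applying these spectrally and using Tonelli, the lemma reduces to the two building blocks $h(x)=\min(x,s)$ and $h(x)=(x-s)_{+}$. Using the identity $\min(x,s)=x-(x-s)_{+}$ and additivity of the trace on a sum of positive operators, both building-block cases (concave subadditivity and convex superadditivity) become equivalent to the single inequality
$$N_s(A+B)\ \ge\ N_s(A)+N_s(B),\qquad A,B\ge 0\ \text{compact},\ s>0,$$
which I will call $(\star)$.

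The core of the argument is $(\star)$, and this is the step I expect to demand the most care. I would use the variational description
$$N_s(X)=\max\big\{\mathrm{Tr}(PX)-s\,\mathrm{rank}(P):\ P\ \text{a finite-rank orthogonal projection}\big\},$$
which follows from Ky Fan's maximum principle: for a rank-$m$ projection, $\mathrm{Tr}(PX)\le\sum_{k=1}^m\lambda_k(X)$, and the right-hand side minus $sm$ is maximised over $m$ exactly at $m=\#\{k:\lambda_k(X)>s\}$, with equality at $P=\mathbf 1_{(s,+\infty)}(X)$. Let $P_A=\mathbf 1_{(s,+\infty)}(A)$ and $P_B=\mathbf 1_{(s,+\infty)}(B)$ be the optimal projections, of finite ranks $r_A,r_B$, and let $P$ be the projection onto $\mathrm{ran}\,P_A+\mathrm{ran}\,P_B$, so that $\mathrm{rank}(P)\le r_A+r_B$ and $P\ge P_A$, $P\ge P_B$. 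Since $A\ge 0$ and $P-P_A\ge 0$ we get $\mathrm{Tr}(PA)\ge\mathrm{Tr}(P_AA)$, and similarly $\mathrm{Tr}(PB)\ge\mathrm{Tr}(P_BB)$; combining with $-s\,\mathrm{rank}(P)\ge -s(r_A+r_B)$ gives
$$N_s(A+B)\ge \mathrm{Tr}(P(A+B))-s\,\mathrm{rank}(P)\ge\big(\mathrm{Tr}(P_AA)-sr_A\big)+\big(\mathrm{Tr}(P_BB)-sr_B\big)=N_s(A)+N_s(B),$$
which is exactly $(\star)$. The subtle point is that the eigenvalue counting functions are \emph{not} superadditive pointwise; superadditivity survives only after integration in $s$, and passing to the projection onto the \emph{sum} of the two ranges is precisely what captures this.

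Finally I would globalise. Iterating the two-operator inequality (with $A$ a partial sum and $B$ the next term, both positive and compact) gives the statement for each finite sum $S_N=\sum_{n=1}^N T_n$. To let $N\to\infty$, note $S_N\uparrow T$ with $\|S_N-T\|\to 0$; by Lemma~\ref{operator} the eigenvalues satisfy $\lambda_k(S_N)\le\lambda_k(S_{N+1})\le\lambda_k(T)$, while norm convergence gives $\lambda_k(S_N)\to\lambda_k(T)$, so $\lambda_k(S_N)\uparrow\lambda_k(T)$ for each $k$. As $h$ is continuous and increasing on $(0,+\infty)$ (eigenvalues equal to $0$ cause no issue, since then $\lambda_k(S_N)=0$ for all $N$), we have $h(\lambda_k(S_N))\uparrow h(\lambda_k(T))$, and monotone convergence in $k$ yields $\mathrm{Tr}(h(S_N))\to\mathrm{Tr}(h(T))$. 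Passing to the limit in the finite-sum inequalities then gives $\mathrm{Tr}(h(T))\ge\sum_n\mathrm{Tr}(h(T_n))$ in the convex case and $\mathrm{Tr}(h(T))\le\sum_n\mathrm{Tr}(h(T_n))$ in the concave case, which completes the proof.
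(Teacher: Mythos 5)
Your proof is correct. Note, however, that the paper does not actually prove this lemma: it is quoted from the reference \cite{Rot} (Rotfel'd), so there is no internal argument to compare against. What you have written is, in essence, the standard proof of that cited result, and all the individual steps check out: the integral representations $h(x)=\int (x-t)_{+}\,d\nu(t)$ (convex) and $h(x)=\int \min(x,t)\,d\kappa(t)$ (concave) reduce everything, via Tonelli and the identity $\min(x,s)=x-(x-s)_{+}$ together with additivity of the trace, to the superadditivity $N_s(A+B)\ge N_s(A)+N_s(B)$ of the truncated trace; your derivation of that inequality from the Ky Fan variational principle, using the projection onto $\mathrm{ran}\,P_A+\mathrm{ran}\,P_B$ and the positivity of $\mathrm{Tr}\bigl((P-P_A)A\bigr)=\mathrm{Tr}\bigl(A^{1/2}(P-P_A)A^{1/2}\bigr)$, is exactly right, and you correctly identify it as the crux (pointwise superadditivity of eigenvalue counts is false, and only the integrated quantity behaves). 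The passage to infinite sums via Weyl monotonicity plus norm convergence and monotone convergence is also sound. Two cosmetic caveats, neither of which affects the applications in the paper: the concave integral representation presupposes that $h$ is (right-)continuous at $0$, which excludes degenerate examples like $h=\mathbf{1}_{(0,\infty)}$ (for which the claimed inequality nevertheless holds, reducing to subadditivity of rank); and when $\mathrm{Tr}(A)=\infty$ the rearrangement $\mathrm{Tr}(\min(A+B,s))\le\mathrm{Tr}(\min(A,s))+\mathrm{Tr}(\min(B,s))$ should be checked directly rather than obtained by subtracting infinite traces — it is then trivially true since the right-hand side is $+\infty$. With those remarks your argument is a complete, self-contained substitute for the external citation.
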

The following classical result will be used in section \ref {Trace}.
\begin{lem}\label{A0}
Let $p\geq 1$ and let $(a_n)_{n\geq 1}, (b_n)_{n\geq 1}$ be two positive decreasing sequences. Suppose that 
$$
\displaystyle \sum _{k=1}^na_k^{1/p}\leq \displaystyle \sum _{k=1}^nb_k^{1/p}, \quad ( \text{for all}\,\, n\geq 1).
$$
Then, for every increasing positive function $h$ such that $h(t^p)$ is convex, we have
\begin{equation}
\displaystyle \sum _{k=1}^n h\left ( a_n\right ) \leq \displaystyle \sum _{k= 1} ^nh(b_n).
\end{equation}
\end{lem}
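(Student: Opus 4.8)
The plan is to recognize the statement as the Hardy--Littlewood--P\'olya principle that \emph{weak majorization} is preserved by increasing convex functions, and to prove it by reducing the general $h$ to elementary convex pieces. Put $\alpha_k=a_k^{1/p}$ and $\beta_k=b_k^{1/p}$; these are positive decreasing sequences, and the hypothesis reads $A_m:=\sum_{k=1}^m\alpha_k\le \sum_{k=1}^m\beta_k=:B_m$ for every $m\ge 1$. Setting $g(t):=h(t^p)$, the function $g$ is convex by assumption and increasing on $[0,\infty)$ (since $h$ is increasing and $t\mapsto t^p$ is increasing for $p\ge 1$), and the conclusion becomes
$$\sum_{k=1}^n g(\alpha_k)\le \sum_{k=1}^n g(\beta_k),\qquad (n\ge 1).$$

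First I would reduce to a one-parameter family of test functions. Writing $g'_+$ for the nondecreasing, right-continuous right derivative of the convex function $g$, and $\mu$ for the nonnegative Lebesgue--Stieltjes measure it generates, one has the standard representation
$$g(t)=g(0)+g'_+(0)\,t+\int_{(0,\infty)}(t-c)_+\,d\mu(c),\qquad (t\ge 0),$$
where $g'_+(0)\ge 0$ because $g$ is increasing. Since both sides of the desired inequality are \emph{finite} sums and the integrand is nonnegative, Tonelli's theorem lets me substitute this representation and interchange $\sum_{k=1}^n$ with $\int d\mu(c)$. The constant term cancels, the linear term contributes $g'_+(0)(B_n-A_n)\ge 0$, and it therefore suffices to establish the inequality for each elementary truncation $g_c(t):=(t-c)_+$ with $c\ge 0$.

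The key step is then an elementary computation for $g_c$. Because $(\alpha_k)$ is decreasing, let $m=m(c,n)$ be the number of indices $k\le n$ with $\alpha_k>c$; then
$$\sum_{k=1}^n(\alpha_k-c)_+=\sum_{k=1}^m(\alpha_k-c)=A_m-mc.$$
On the other side, dropping the nonnegative tail $k>m$ and using $(\beta_k-c)_+\ge \beta_k-c$ gives
$$\sum_{k=1}^n(\beta_k-c)_+\ge \sum_{k=1}^m(\beta_k-c)=B_m-mc\ge A_m-mc,$$
the last inequality being exactly the majorization hypothesis $B_m\ge A_m$. This yields $\sum_{k\le n}g_c(\alpha_k)\le\sum_{k\le n}g_c(\beta_k)$, and summing the three contributions completes the argument.

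I expect the only real obstacle to be the bookkeeping around the integral representation: justifying the decomposition of a possibly non-smooth convex $g$ into the pieces $(t-c)_+$ and the interchange of sum and integral (both handled by Tonelli, since $n$ is finite and the integrand is nonnegative), together with noting where the \emph{increasing} hypothesis on $h$ is genuinely used --- it enters precisely through $g'_+(0)\ge 0$, which is what makes the linear term nonnegative under weak (rather than full) majorization. If one prefers to avoid the measure-theoretic representation altogether, the same conclusion can be reached by a direct Abel summation on the finite partial sums, but the reduction to $(t-c)_+$ seems the cleanest route.
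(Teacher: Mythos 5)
Your proof is correct. The paper itself does not argue the lemma at all: it simply cites Corollary 3.3 of Chapter IV of [GGK] (Gohberg--Goldberg--Kaashoek), which is the classical Hardy--Littlewood--P\'olya/Weyl principle that weak submajorization of decreasing sequences is preserved by increasing convex functions --- exactly the statement you identify after the substitution $\alpha_k=a_k^{1/p}$, $\beta_k=b_k^{1/p}$, $g=h(\cdot^p)$. What you add is a self-contained proof of that principle: the integral representation $g(t)=g(0)+g'_+(0)t+\int_{(0,\infty)}(t-c)_+\,d\mu(c)$ reduces everything to the hinge functions $(t-c)_+$, for which the inequality follows from monotonicity of $(\alpha_k)$ and the partial-sum hypothesis via $\sum_{k\le n}(\alpha_k-c)_+=A_m-mc\le B_m-mc\le\sum_{k\le n}(\beta_k-c)_+$; the increasingness of $h$ is used exactly where you say, to make the linear term $g'_+(0)(B_n-A_n)$ nonnegative, since only weak (not full) majorization is assumed. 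The two minor points worth stating explicitly if you write this up are that $g$ must be taken convex and defined at $0$ (harmless here, since one may replace $0$ by any base point below $\min_k\min(\alpha_k,\beta_k)$ if $h$ is only given on $(0,\infty)$), and that the index set $\{k\le n:\alpha_k>c\}$ is an initial segment $\{1,\dots,m\}$ precisely because $(\alpha_k)$ is decreasing. Your route buys self-containedness at the cost of a little measure-theoretic bookkeeping; the paper's route is a one-line appeal to a standard reference.
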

\begin{proof}
This is a direct consequence of Corollary 3.3 of Chapter IV  in \cite {GGK}.
\end{proof}


\subsection{Weighted Bergman spaces}\label {W} In this subsection we recall briefly the definition of the class of weights $\cW$ introduced in \cite {EMMN} . Let $\Omega $ be a domain (bounded or not) of $\CC $ and let $\partial \Omega$ denotes the boundary of $\Omega$.  Let $\partial _\infty \Omega = \partial \Omega$ if $\Omega$ is bounded and $\partial _\infty \Omega = \partial \Omega \cup \{\infty\}$ if $\Omega $ is not bounded. Let $\omega$ be a positive continuous weight on $ \Omega $. In what follows, we suppose that the reproducing kernel $K$ of $\cA ^2_\omega$ satisfies  the following two conditions
\begin{equation}\label {neva}
\displaystyle \lim _{z\to \partial _\infty \Omega }\| K_z\| = \infty.
 \end{equation}
 And for every $\zeta  \in \Omega$
 \begin{equation}\label {C2}
| K(\zeta, z)| = o(\|K_z\|)\qquad (z\to \partial _\infty \Omega ).
\end{equation}
Let $$
\tau ^2(z) (=  \tau^2 _\omega (z) ):=  \frac{1}{\omega ^2(z) \|K_z\| ^2}, \ \ \ (z\in \Omega).
$$
We say that  $\omega \in \cW$  (or $\cW (\Omega)$) if there exists constants  $a,C >0$ such that for $z,\zeta \in \Omega$ satisfying $| z-\zeta| \leq a\tau (z)$, we have
 \begin{align} \label {C3}
 \|K _z \|  \|K_\zeta \| \leq C |K(\zeta, z)|, \quad 
\frac{1}{C}\tau  (\zeta ) \leq \tau (z)\leq C\tau  (\zeta ),
\end{align}
and
\begin{align}\label {C4}
\tau (z) = O \left ( \min (1, dist(z,  \partial _\infty \Omega))\right ).
\end{align}
Now, we give some examples
\begin{itemize}
\item Standard Bergman spaces on the unit disc $\DD $.  Let $\alpha >-1$.
$$ {\cA}_{\alpha}^{2}:=\left\{ f \in H(\mathbb{D})\text{ : }\|f\|_\alpha ^2=\int_\mathbb{D}|f(z)|^{2}\,dA_{\alpha}(z)<\infty\right\}, .$$
 The reproducing kernel is given by $$
K^\alpha _z(w)=\displaystyle \frac{1}{ (1-\overline {z}w)^{2+\alpha }    }$$
and 
$$
 \   \tau ^{2}_\alpha (z):= \tau ^2(z)= {(1+\alpha)}(1-|z|^2)^2.
$$
\item Weighted Bergman spaces on $\DD$.
Let $ \cD$ be the class of Oleinik-Perel'man weights on $\DD$ (see \cite {OP, AP, EMMN}). It is easy to see from \cite {LR, AP}, that if $\omega \in \cD$, then $\omega \in \cW$, $$\|K^\omega _z\|^{2}\asymp \omega^{-2}(z)\Delta (\log (1/\omega (z))\ \mbox{ and}\  \tau _\omega (z)^2\asymp \frac{1}{ \Delta (\log (1/\omega (z)))}.$$
For more general situation see \cite{HLS}.
\item Standard Fock spaces. Let $\alpha >0$.
 \begin{equation}\label {F}
\cal F ^2_\alpha := \cal F ^2_\alpha (\CC )= \{ f \in H (\CC): \ \| f\| ^2 = \frac{\alpha}{\pi}\displaystyle \int _{\CC }|f(z)|^2 e ^{-\alpha |z|^2 }dA(z)<\infty \}.
 \end{equation}
 Then the reproducing kernel is given by $K(z,w) = e ^{\alpha z{\bar w} }$ and $\tau (z)\asymp 1$.\\

\item  Weighted Fock spaces.  In this case $\cA^2_\omega$ will be denoted by $\cF ^2_\omega$.\\
Let  $\omega^{2} (z)= e^{-\Psi (| z|^2)}$ be a positive weight on $\CC$.  We say that $\omega \in \cR$ if  $\Psi: [0,+\infty)\to (0,+\infty)  \in \cC ^3$ and satisfies the following conditions
\begin{equation} \label {SY1}
\Psi' >0, \quad \Psi '' \geq 0, \quad \Psi '''\geq 0,
\end{equation}
and
\begin{equation}\label {SY2}
\Phi '' (x) = O(x^{-1/2}\left (\Phi '(x)\right )^{1+\eta} )\quad \mbox{for some} \ \eta <1/2,
\end{equation}
where $\Phi (x)= x\Psi '(x)$. This class of spaces was considered by K. Seip and E. H. Youssfi in \cite {SY}. One can see that since polynomials are dense in $\cF ^2_\omega$, then conditions (\ref{neva}) and (\ref{C2}) are satisfied. It is proved in  \cite {SY} that
$$
\tau_\omega(z)^{-2} =: K(z,z) \omega ^2(z) \asymp \Phi '(|z|^2).
$$
Using Lemma 3.2 of \cite {SY}, it is not hard to prove that $\omega \in \cW$.\\

\end{itemize}
It is  proved in \cite {EMMN}, that if $\omega \in \cW$ then there exist $B_\omega>1 ,\delta_\omega \in (0, a/4B_\omega)$ such that for all $\delta \in (0,\delta _\omega)$ there exists $(z_n)\in \Omega$ such that  
\begin{itemize}
\item $\Omega =\displaystyle  \cup _{n\geq 1}D (z_n, \delta \tau _\omega (z_n))= \cup _{n\geq 1}D (z_n, B_\omega \delta \tau _\omega (z_n))$.\\
\item $D (z_n,  \frac{\delta}{B_\omega } \tau _\omega (z_n)) \cap D (z_m,  \frac{\delta}{B_\omega } \tau _\omega (z_m))= \emptyset $, for $n\neq m$.\\
\item $ z \in D (z_n, \delta \tau _\omega (z_n))$ implies that $D (z, \delta \tau _\omega (z)) \subset D (z_n, B_\omega \delta \tau _\omega (z_n))$.\\
\item There exists an integer $N$ such that every $D (z_n,B_\omega \delta \tau _\omega (z_n))$ cuts at most $N$ sets of the
family $\left (D (z_m,B_\omega \delta \tau _\omega (z_m))\right )_m)$. We say that $\left ( D (z_n,B_\omega \delta \tau _\omega (z_n))\right )_n$ is of finite multiplicity.\\
\end{itemize}
We say that $(R_n)_n \in  \cL _\omega $ if  $(R_n)_n= ( D (z_n, \delta \tau _\omega (z_n)))_n$ and satisfies the above conditions. \\

In the following, we consider $\omega \in \cW$ and suppose that $T_\mu $ is compact. That is  $\mu (R_n)/A(R_n) = o(1)$. As mentioned before $(R_n(\mu))$ will denote an enumeration of $(R_n)_n$ such that $$a_n(\mu) := \mu (R_n(\mu))/A(R_n(\mu)),$$ is decreasing.\\
\begin{lem}\label {restriction} Let $\omega \in \cW$ and let $(R_n)\in \cL _\omega$.
Let $\mu$ be a positive Borel measure on $\Omega$ such that $T_\mu $ is compact on $\cA^2_\omega$. Denote by  $b=  \frac{1+B_\omega}{2}$ and by $\mu _n$ the restriction of $\mu$ to $\displaystyle \cup _{k\leq n}R_k(\mu)$.  Let $\nu _n, \nu$ be the following  measures 
$$
 d\nu _n=\displaystyle \sum _{j=1}^n a_j(\mu)dA_{ |bR_j(\mu)}\ \mbox{and}\  d\nu =\displaystyle \sum _{j\geq1} a_j(\mu)dA_{ |bR_j(\mu)}.
$$
Then, there exists a constant $C= C(\omega, (R_n))>0$ such that
\begin{enumerate}
\item [\mbox{(1)}] $T_{\mu _n}\leq C T_{\nu_n}$ and  $T_{\mu }\leq C T_{\nu}$.\\
\item [(2)] $\| T_\mu\| \leq C a_1(\mu)$.\\
\item [(3)] $ \lambda _k (T_{\mu_n}) \leq C \lambda _k (T_{\nu_n})$, ($k\geq 1$).\\
\item [(4)] $
\lambda _k(T_{\mu _n}) \leq \lambda _k(T_\mu ) \leq \lambda _k(T_{\mu _n}) + Ca_{n+1}(\mu)
$, ($k\geq 1$).
\end{enumerate}
\end{lem}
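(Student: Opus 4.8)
The proof reduces everything to the quadratic-form identity $\langle T_\mu f,f\rangle=\int_\Omega|f|^2\omega^2\,d\mu$, valid for every positive Borel measure by the reproducing property and Fubini, which recasts each assertion as a comparison of integrals. The one analytic ingredient I would record first is the weighted sub-mean-value inequality attached to $\omega\in\cW$: there exist $r>0$ and $C>0$, depending only on $\omega$, such that for all $f\in\cA^2_\omega$ and all $\zeta\in\Omega$,
\[
|f(\zeta)|^2\omega^2(\zeta)\le\frac{C}{\tau^2(\zeta)}\int_{D(\zeta,r\tau(\zeta))}|f|^2\omega^2\,dA .
\]
This is exactly where the axioms (C3)--(C4) of $\cW$ enter, and it is the only genuinely technical point of the lemma. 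Since $b=(1+B_\omega)/2$ is chosen strictly between $1$ and $B_\omega$, shrinking $r$ lets me guarantee $D(\zeta,r\tau(\zeta))\subset bR_j(\mu)$ whenever $\zeta\in R_j(\mu)$ (using $\tau(\zeta)\asymp\tau(z_j)$); together with $\tau^{-2}(\zeta)\asymp A(bR_j(\mu))^{-1}$ this gives $\sup_{\zeta\in R_j(\mu)}|f(\zeta)|^2\omega^2(\zeta)\lesssim A(bR_j(\mu))^{-1}\int_{bR_j(\mu)}|f|^2\omega^2\,dA$.

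For (1), I would use that $\Omega=\bigcup_jR_j(\mu)$ gives $\int_\Omega|f|^2\omega^2\,d\mu\le\sum_j\int_{R_j(\mu)}|f|^2\omega^2\,d\mu$, estimate each summand by $\big(\sup_{R_j(\mu)}|f|^2\omega^2\big)\,\mu(R_j(\mu))$, and insert the previous bound together with $\mu(R_j(\mu))/A(bR_j(\mu))=b^{-2}a_j(\mu)$. Summing over $j$ produces $\langle T_\mu f,f\rangle\lesssim\sum_ja_j(\mu)\int_{bR_j(\mu)}|f|^2\omega^2\,dA=\langle T_\nu f,f\rangle$, that is $T_\mu\le CT_\nu$; restricting the sum to $j\le n$, and using that $\mu_n$ is carried by $\bigcup_{j\le n}R_j(\mu)$, yields $T_{\mu_n}\le CT_{\nu_n}$.

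Assertions (2) and (3) are then formal. For (2), bounding $a_j(\mu)\le a_1(\mu)$ in $\langle T_\nu f,f\rangle$ and invoking the finite multiplicity $N$ of $(B_\omega R_j(\mu))_j$, hence of the smaller $(bR_j(\mu))_j$, one gets $\langle T_\nu f,f\rangle\le Na_1(\mu)\|f\|^2$, so $\|T_\mu\|\le C\|T_\nu\|\le CNa_1(\mu)$. For (3), $T_{\nu_n}$ is compact, being a finite sum of Toeplitz operators with bounded densities and compact supports in $\Omega$; so (1) and the monotonicity Weyl Lemma~\ref{operator} give $\lambda_k(T_{\mu_n})\le\lambda_k(CT_{\nu_n})=C\lambda_k(T_{\nu_n})$.

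Finally, for (4) the left inequality follows from $\mu_n\le\mu$ and Lemma~\ref{operator}. For the right one I would split $\mu=\mu_n+\mu'$ with $\mu'$ the restriction of $\mu$ to $\Omega\setminus\bigcup_{k\le n}R_k(\mu)$, so that $T_\mu=T_{\mu_n}+T_{\mu'}$ and the elementary eigenvalue inequality $\lambda_k(A+B)\le\lambda_k(A)+\|B\|$ for positive operators gives $\lambda_k(T_\mu)\le\lambda_k(T_{\mu_n})+\|T_{\mu'}\|$. Since $\mu'(R_j(\mu))=0$ for $j\le n$ and $\mu'(R_j(\mu))\le\mu(R_j(\mu))$ for $j>n$, every lattice cell has $\mu'$-density at most $a_{n+1}(\mu)$, i.e. $a_1(\mu')\le a_{n+1}(\mu)$; applying (2) to $\mu'$ then yields $\|T_{\mu'}\|\le Ca_1(\mu')\le Ca_{n+1}(\mu)$, which closes the estimate. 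As indicated, the whole argument is bookkeeping with the covering and standard eigenvalue inequalities once the weighted sub-mean-value inequality is in hand.
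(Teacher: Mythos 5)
Your proof is correct and follows essentially the same route as the paper: the sub-mean-value inequality $|f(\zeta)|^2\omega^2(\zeta)\lesssim A(bR_j)^{-1}\int_{bR_j}|f|^2\omega^2\,dA$ for $\zeta\in R_j(\mu)$ is exactly the paper's inequality (4.2)--(4.3), and parts (2)--(4) are deduced from it by the same finite-multiplicity bound, the monotonicity Weyl lemma, and the perturbation estimate $\lambda_k(T_\mu)\le\lambda_k(T_{\mu_n})+\|T_{\mu-\mu_n}\|$ with $a_1(\mu-\mu_n)\le a_{n+1}(\mu)$. No substantive differences.
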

\begin{proof}
By the subharmonicity inequality applied to the function $z\to |f(z)/K(z,\zeta)|^2$, there exists a constant $C_1>0$, which depends on $\omega$ and $(R_n)_n$, such that 
\begin {equation}\label{IM}
 |f(\zeta)|^2\omega ^2(\zeta) \leq \frac{C_1}{A(R_n)}\displaystyle \int _{b R_n} |f(z)|^2\omega ^2(z)dA(z),\qquad (\zeta \in R_n).
\end{equation}
It gives that
\begin{equation}\label {IP}
\displaystyle \int  _{R_n}|f(\zeta)|^2\omega ^2(\zeta) d\mu (\zeta) \leq C_1a_n(\mu)\displaystyle \int _{bR_n} |f(z)|^2\omega ^ 2(z)dA (z).
\end{equation}
This implies 
\[
\begin {array}{lll}
\langle T_{\mu _n} f, f \rangle &= &\displaystyle \int _\Omega |f(\zeta)|^2\omega ^2(\zeta) d\mu _n (\zeta)\\
& \leq & \displaystyle \sum _{j= 1}^n \displaystyle \int _{R_j(\mu)} |f(\zeta)|^2\omega ^2(\zeta) d\mu (\zeta)\\
&\leq & C_1\displaystyle \sum _{j=1}^n a_j(\mu)\displaystyle \int _{bR_j(\mu)} |f(z)|^2\omega ^2(z)dA(z) \\
& = & C_1\displaystyle \int _\Omega|f(z)|^2\omega ^2(z)d\nu _n(z)\\
&=&\langle C_1T_{\nu _n} f, f \rangle.
\end{array}
\]
This means that $T_{\mu _n} \leq  C_1 T_{\nu _n}$, which proves the part $(1)$ of the lemma. \\

Let $N$ be the multiplicity of  $\left ( D(z_n,B_\omega \delta \tau _\omega (z_n))\right )_n$.
From part $(1)$ we have 
$$0 \leq T_\mu \leq C_1T_\nu \leq NC_1a_1(\mu)Id_{\cA^2_\omega}.$$ Then $\| T_\mu\| \leq NC_1 a_1(\mu)$.\\

Clearly, part $(3)$ is a consequence of part $1$ and Lemma \ref {operator}.\\

Since $\mu_n \leq \mu $, we have $T_{\mu _n} \leq T_\mu$. Then by Lemma \ref {operator}, we get $\lambda _k(T_{\mu _n}) \leq \lambda _k(T_\mu )$.  For the second inequality note that 
$
\lambda _k(T_\mu ) \leq \lambda _k(T_{\mu _n}) + \| T_\mu -T_{\mu _n}\|.
$
Using the part $(2)$, applied to $\mu -\mu _n$, we obtain
$$\| T_\mu -T_{\mu _n}\| =\| T_{\mu-\mu _n}\| \leq Ca_1{(\mu -\mu_{n})}\leq Ca_{n+1}(\mu).$$
Combining the two last inequalities, we obtain 
$ \lambda _k(T_\mu ) \leq \lambda _k(T_{\mu _n}) + Ca_{n+1}(\mu).$
\end{proof}
\section{A general argument}
Let $\beta >0$ and let $\delta >0$. The  function $h_{\beta, \delta}$ defined on $[0,\infty)$ is given by $$ h_{\beta, \delta}(t)= (t^\beta -\delta)^+:= \max (t^\beta -\delta,0) .$$
The functions $h_{\beta, \delta}$, will play an important role in our study. First,  note that $h_{\beta, \delta}$  is convex for $\beta \geq 1$ and if $\beta \in (0,1)$, we have $h_{\beta, \delta}(t^p)$ and   $h_{\beta, \delta}^p$ are convex if and only if $p\geq 1/\beta$.\\
The following two lemmas will be used in the sequel to obtain estimates of eigenvalues of positive compact operator $T$ from upper and lower estimates of the trace of $h(T)$ for some suitable functions $h$. 
\begin{lem}\label {A1}
Let $\beta \in (0,1]$ and let $(a_n)_{n\geq 1}$ be a decreasing sequence. Let $\rho : [1,+\infty) \to (0,+\infty)$ be an increasing positive function such that $\rho (x)/x^\gamma$ is decreasing for some $\gamma \in  (0,1/\beta)$.
Suppose that there exists $ B >0$ such that for every  $\delta \in (0,1)$, we have
\begin{equation}\label {AIconvex}
\displaystyle \sum _{n\geq 1} h_{\beta, \delta}\left (\frac{1}{B\rho (n)}\right ) \leq \displaystyle \sum _{n\geq 1} h_{\beta, \delta}(a_n) \leq \displaystyle \sum _{n\geq 1} h_{\beta, \delta} \left (\frac{B}{\rho (n)}\right ).
\end{equation}
Then $a_n \asymp {1}/{\rho (n)}$.
\end{lem}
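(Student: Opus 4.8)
The plan is to establish the two one-sided comparisons $a_n\lesssim 1/\rho(n)$ and $a_n\gtrsim 1/\rho(n)$ separately, extracting the first from the right-hand inequality in \eqref{AIconvex} and the second from the left-hand one. Throughout I would record and exploit the structural consequences of the assumption that $\rho(x)/x^\gamma$ is decreasing with $\gamma\beta<1$. First, for $n\le m$ one has $\rho(n)\ge \rho(m)(n/m)^\gamma$, which together with $\gamma\beta<1$ yields the two-sided estimate
\[
\frac{m}{\rho(m)^\beta}\ \le\ \sum_{n=1}^m\frac{1}{\rho(n)^\beta}\ \lesssim\ \frac{m}{\rho(m)^\beta},
\]
the lower bound being trivial and the upper one following from $\sum_{n=1}^m n^{-\gamma\beta}\asymp m^{1-\gamma\beta}$. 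Second, the function $x\mapsto x/\rho(x)^\beta=x^{1-\gamma\beta}\,(x^{\gamma}/\rho(x))^{\beta}$ is increasing, with the quantitative gain $\big(M/\rho(M)^\beta\big)\big/\big(m/\rho(m)^\beta\big)\ge (M/m)^{1-\gamma\beta}$ for $M\ge m$. I will also use the doubling bound $\rho(\lambda m)\le \lambda^\gamma\rho(m)$ for $\lambda\ge 1$, and I may assume $\rho(n)\to\infty$ (otherwise the sums in \eqref{AIconvex} cannot be finite for all small $\delta$, and then $1/\rho(n)\asymp 1$ and the claim is elementary); finitely many indices are absorbed into the constants.

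For the upper bound I would fix $m$ and specialize the right inequality in \eqref{AIconvex} to $\delta=\delta_m:=(B/\rho(m))^\beta$. On the left, monotonicity of $(a_n)$ gives
\[
\sum_{n\ge1}h_{\beta,\delta_m}(a_n)\ \ge\ \sum_{n=1}^m\big(a_n^\beta-\delta_m\big)^+\ \ge\ m\,\big(a_m^\beta-\delta_m\big)^+,
\]
while on the right only the indices with $\rho(n)<\rho(m)$ contribute, so that $\sum_n h_{\beta,\delta_m}(B/\rho(n))\le B^\beta\sum_{n=1}^m\rho(n)^{-\beta}\lesssim B^\beta m/\rho(m)^\beta$ by the summation estimate above. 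Dividing by $m$ gives $(a_m^\beta-\delta_m)^+\lesssim B^\beta/\rho(m)^\beta$, whence $a_m^\beta\lesssim \delta_m + B^\beta/\rho(m)^\beta\lesssim 1/\rho(m)^\beta$ and finally $a_m\lesssim 1/\rho(m)$.

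The lower bound is the delicate step, and I expect it to be the main obstacle. Here I would apply the left inequality with $\delta=a_m^\beta$, so that its right-hand side collapses to $\sum_{n<m}(a_n^\beta-a_m^\beta)\le\sum_{n<m}a_n^\beta$, which by the already-proved upper bound is $\lesssim m/\rho(m)^\beta$. The difficulty is that a crude lower estimate of the left-hand side is \emph{also} of size $m/\rho(m)^\beta$, so a direct comparison only relates the constants and does not pin down $a_m$. To circumvent this I would let the truncation index float: let $M$ be the largest index with $a_m\le 2^{-1/\beta}(B\rho(M))^{-1}$ (if no such $M\ge1$ exists, then $a_m> 2^{-1/\beta}(B\rho(1))^{-1}\ge 2^{-1/\beta}(B\rho(m))^{-1}$ and we are done). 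For every $n\le M$ the cutoff removes at most half of the term, giving
\[
\sum_{n\ge1}h_{\beta,a_m^\beta}\!\Big(\frac{1}{B\rho(n)}\Big)\ \ge\ \sum_{n=1}^M\frac{1}{2B^\beta\rho(n)^\beta}\ \ge\ \frac{M}{2B^\beta\rho(M)^\beta}.
\]

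Combining this with the upper estimate of the right-hand side yields $M/\rho(M)^\beta\lesssim m/\rho(m)^\beta$, and the power gain $(M/m)^{1-\gamma\beta}$ recorded above then forces $M\le C\,m$ for a constant $C$ depending only on $B,\beta,\gamma$. By maximality of $M$ one has $a_m> 2^{-1/\beta}(B\rho(M+1))^{-1}\ge 2^{-1/\beta}(B\rho((C+1)m))^{-1}$, and the doubling bound $\rho((C+1)m)\le(C+1)^\gamma\rho(m)$ closes the argument with $a_m\gtrsim 1/\rho(m)$. The crux is precisely this monotone floating of the truncation index together with the strict power gain coming from $\gamma\beta<1$: without that gain the two sides of the inequality would be merely comparable and the bound on $a_m$ would fail to close.
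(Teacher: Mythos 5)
Your proof is correct, and while it follows the same overall strategy as the paper's --- extract $a_m\lesssim 1/\rho(m)$ from the right-hand inequality with a well-chosen cutoff, then bootstrap that bound into the lower one, using $\gamma\beta<1$ to produce a strict power gain that closes the loop --- the execution is genuinely different. The paper first reduces to $\beta=1$ and then works entirely with the counting function $N(\delta)=\mathrm{Card}\{n:\ a_n\geq\delta\}$ and the inverse function $\rho^{-1}$: it proves $\sum_{\rho(n)\leq x}1/\rho(n)\asymp\rho^{-1}(x)/x$, gets $N(\delta)\lesssim\rho^{-1}(2B/\delta)$ for the upper bound, and for the lower bound uses the concentration estimate $\sum_{a_n\geq\delta}a_n\asymp\sum_{\delta\leq a_n\leq A\delta}a_n$ for $A$ large to conclude $\rho^{-1}(1/2B\delta)\lesssim N(\delta)$. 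You avoid both $N(\delta)$ and $\rho^{-1}$, working with partial sums over indices and a floating truncation index $M$; the role of the paper's ``large $A$'' trick is played by your inequality $(M/m)^{1-\gamma\beta}\leq \big(M\rho(m)^\beta\big)/\big(m\rho(M)^\beta\big)\lesssim 1$, which forces $M\leq Cm$ and then $a_m\gtrsim 1/\rho(M+1)\gtrsim 1/\rho(m)$ by the doubling bound. Your version is arguably more elementary and handles general $\beta\in(0,1]$ without the preliminary reduction. Two routine points you leave implicit should be recorded: the cutoffs $\delta_m=(B/\rho(m))^\beta$ and $\delta=a_m^\beta$ must lie in $(0,1)$, which holds for all but finitely many $m$ once $\rho(m)\to\infty$ and $a_m\to0$ are in hand; and $a_m>0$ for every $m$ (needed so that $\delta=a_m^\beta>0$ and so that your $M$ is finite), which follows by letting $\delta\to0^+$ in the left-hand hypothesis, since $\sum_n(B\rho(n))^{-\beta}\gtrsim\sum_n n^{-\gamma\beta}=\infty$ while $\sum_n(a_n^\beta-\delta)^+$ would remain bounded if the sequence vanished from some index on.
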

\begin{proof}
Without loss of generality, we suppose that $\rho$ is strictly increasing and $\beta =1$. Let $\delta >0$ and let $h_\delta (t) = (t-\delta )^+$. By (\ref {AIconvex}) we have
$$
 \displaystyle \sum _{a_n \geq  2 \delta }a_n  \leq  2 \displaystyle \sum _{a_n \geq 2\delta }(a_n -\delta)   \leq 2 \displaystyle \sum _{n }h_\delta(a_n ) \leq 2 \displaystyle \sum _{n }h_\delta(B/\rho (n) )\leq \displaystyle \sum _{\rho (n) \leq  B/\delta}\frac{2B}{\rho (n)}.
$$
and 
$$
 \displaystyle \sum _{\rho (n) \leq  \frac{1}{2B \delta } }\frac{1}{B\rho (n)}  \leq  2 \displaystyle \sum _{n }(\frac{1}{B\rho (n)} -\delta )^+  \leq 2 \displaystyle \sum _{n }(a_n-\delta )^+ \leq  2 \displaystyle \sum _{a_n \geq   \delta }a_n. 
 $$
 These inequalities can be written as follows
\begin{equation}\label {AI1}
\displaystyle \sum _{\rho (n) \leq 1/2B\delta}\frac{1}{\rho (n)} \lesssim \displaystyle \sum _{a_n \geq \delta }a_n  \lesssim \displaystyle \sum _{\rho (n) \leq 2B/\delta}\frac{1}{\rho (n)}.
\end{equation}
We have
$$ \sum _{\rho (n) \leq x}\frac{1}{\rho (n)}\asymp \frac{\rho^{-1}(x)}{x}$$
Indeed, obviously we have 
$$
\frac{\rho^{-1}(x)}{x}\lesssim  \sum _{\rho (n) \leq x}\frac{1}{\rho (n)}.
$$

\noindent Conversely, using the fact that $\rho (x)/x^\gamma$ is decreasing, we have 
$$\sum _{\rho (n) \leq x}\frac{1}{\rho (n)}= \sum _{\rho (n) \leq x}\frac{n^\gamma}{\rho (n)}\frac{1}{n^\gamma}\lesssim \frac{\left(\rho^{-1}(x)\right)^{\gamma}}{x}\sum_{\rho (n) \leq x}\frac{1}{n^{\gamma}}\asymp \frac{\rho^{-1}(x)}{x}.$$

\noindent Then $$ \sum _{a_n \geq \delta }a_n  \lesssim \frac{\delta}{B}\rho^{-1}\left(\frac{2B}{\delta}\right).$$

\noindent Let $N(\delta):=\text{Card}\{n:\ a_n\geq \delta\} $. Since $\delta N(\delta)\leq \sum_{a_{n}\geq \delta}a_{n}$, we obtain
\begin{equation}\label{maj}
 N(\delta)\lesssim \frac{1}{B}\rho^{-1}\left(\frac{2B}{\delta}\right)
\end{equation}
Let $A>1$. Since $x^{1/\gamma}/\rho^{-1}(x)$ is decreasing, we have $\rho ^{-1} (x/A) \leq A^{-\frac{1}{\gamma}}\rho^{-1}(x)$. Then
$$\sum _{a_n \geq A\delta }a_n\lesssim  \frac{A\delta}{B}\rho^{-1}\left(\frac{2B}{A\delta}\right)\lesssim \left(\frac{ 1}{A}\right)^{\frac{1-\gamma}{\gamma}  }\frac{ \delta }{B}\rho^{-1}\left(\frac{2B}{\delta}\right) $$  
Then, for sufficiently large $A$, we have 
$$\sum _{a_n \geq \delta }a_n \asymp \sum _{\delta \leq a_n \leq A\delta   }a_n. $$
Using the left inequality of (\ref{AI1}), we get 
\begin{equation}\label{min}
\delta \rho ^ {-1}(1/2B\delta) \lesssim \displaystyle \sum _{\rho (n) \leq 1/2B\delta}\frac{1}{\rho (n)} \lesssim  \sum _{\delta \leq a_n \leq A\delta   }a_n \lesssim A\delta  N(\delta).
\end{equation}
Combining \eqref{maj} and \eqref{min} we obtain $a_n \asymp 1/\rho (n)$.
\end{proof}
The following lemma will be used in section \ref{Berezin}.
\begin{lem}\label {A2}
Let $(a_n)_{n\geq 1}$ be a positive decreasing sequence. Let $\rho : [1,+\infty) \to (0,+\infty)$ be an increasing positive function. Suppose  that there exist $\beta >1 $ and $\gamma >1$ such that $\rho (t)/t^\gamma$ is increasing and $\rho (t)/t^\beta$ is decreasing.\\
Let $ p \in (0,1/\beta )$ and suppose that there exists $ B >0$ such that for every increasing concave function satisfying $h(t)/t^p$ is increasing, we have
\begin{equation}\label {AIconcave}
\frac{1}{B}\displaystyle \sum _{n\geq 1} h\left (\frac{1}{\rho (n)}\right ) \leq \displaystyle \sum _{n\geq 1} h(a _n) \leq B\displaystyle \sum _{n\geq 1} h \left (\frac{1}{\rho (n)}\right ).
\end{equation}
Then $a_n \asymp {1}/{\rho (n)}$.
\end{lem}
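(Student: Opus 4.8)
The plan is to follow the strategy of Lemma \ref{A1}, but with a \emph{concave} cut-off in place of $(t^\beta-\delta)^+$, and to read off the asymptotics of the counting function $N_a(\delta):=\#\{n:a_n>\delta\}$. Write $c_n:=1/\rho(n)$ and $M(\delta):=\#\{n:c_n\ge\delta\}\asymp\rho^{-1}(1/\delta)$, so that the conclusion $a_n\asymp 1/\rho(n)$ is just $a_n\asymp c_n$. First I would record the consequences of the hypotheses on $\rho$: comparing $\rho(2t)/\rho(t)$ forces $1<\gamma\le\beta$, hence $\beta p<1$ and $\rho(t)/t=(\rho(t)/t^\gamma)\,t^{\gamma-1}$ is increasing. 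From these one gets the elementary regularity estimates used throughout: $\sum_{n>N}1/\rho(n)\lesssim N/\rho(N)$ (since $\gamma>1$), $\sum_{n\le N}\rho(n)^{-p}\asymp N\rho(N)^{-p}$ (since $\beta p<1$), and $\rho^{-1}(cx)\asymp\rho^{-1}(x)$; the last one also yields $M(\epsilon\delta)\le \epsilon^{-1/\gamma}M(\delta)$.

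As test functions I would take, for each $\delta>0$,
\[
h_\delta(t)=\min\bigl(t,\ \delta^{1-p}t^{p}\bigr),
\]
which equals $t$ for $t\le\delta$ and $\delta^{1-p}t^{p}$ for $t\ge\delta$; it is increasing, concave, vanishes at $0$, and $h_\delta(t)/t^{p}$ is non-decreasing, so it is admissible (to make $h_\delta(t)/t^p$ strictly increasing one replaces the exponent $p$ by a fixed $p'\in(p,1/\beta)$, which affects nothing below). A direct computation with the regularity estimates gives
\[
S_c(\delta):=\sum_n h_\delta(c_n)=\delta^{1-p}\sum_{c_n\ge\delta}c_n^{p}+\sum_{c_n<\delta}c_n\asymp\delta\,M(\delta).
\]
Since $h_\delta(t)\ge\delta$ for $t\ge\delta$, the right inequality of \eqref{AIconcave} gives $\delta N_a(\delta)\le\sum_n h_\delta(a_n)\le B\,S_c(\delta)\lesssim\delta M(\delta)$, hence $N_a(\delta)\lesssim M(\delta)$; the usual conversion (using that $\rho(t)/t^\beta$ is decreasing) then yields $a_n\lesssim c_n$.

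The lower bound $a_n\gtrsim c_n$ is the genuinely new point. Because every concave $h$ with $h(0)=0$ obeys $h(t)\gtrsim t$ near $0$, the quantity $S_a(\delta)=\delta^{1-p}\sum_{a_n>\delta}a_n^{p}+\sum_{a_n\le\delta}a_n$ feels the \emph{whole} tail of $(a_n)$, so a single scale $\delta$ cannot control $N_a(\delta)$ directly; the remedy is a concentration argument. Using the just-proved $a_n\le Cc_n$ together with $a_n<\epsilon\delta$ on the far tail, one shows
\[
\sum_{a_n<\epsilon\delta}a_n\ \le\ \sum_n\min(\epsilon\delta,Cc_n)\ \lesssim\ \epsilon\,\delta\,M(\epsilon\delta)\ \lesssim\ \epsilon^{\,1-1/\gamma}\,\delta M(\delta),
\]
which is a small fraction of $S_c(\delta)$ once $\epsilon$ is small ($\gamma>1$ is used here). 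Likewise $a_n\le Cc_n$ and $\sum_{n\le N}\rho(n)^{-p}\asymp N\rho(N)^{-p}$ give the quantitative smallness of the power part, $\delta^{1-p}\sum_{a_n>\delta}a_n^{p}\lesssim (N_a(\delta)/M(\delta))^{1-\beta p}\,\delta M(\delta)$. Inserting these two bounds and the trivial $\sum_{\epsilon\delta\le a_n\le\delta}a_n\le\delta N_a(\epsilon\delta)$ into the left inequality $\tfrac1B S_c(\delta)\le S_a(\delta)$, dividing by $\delta M(\delta)$, and absorbing the far tail by choosing $\epsilon$ small, I obtain a constant $c>0$ with
\[
c\ \le\ C\Bigl(\tfrac{N_a(\delta)}{M(\delta)}\Bigr)^{1-\beta p}+\tfrac{N_a(\epsilon\delta)}{M(\delta)}.
\]
Since $N_a(\delta)\le N_a(\epsilon\delta)$ and the map $x\mapsto Cx^{1-\beta p}+x$ is increasing and tends to $0$ at $0$, this forces $N_a(\epsilon\delta)\ge x_0 M(\delta)$ for all $\delta$; via $M(\epsilon\delta)\asymp M(\delta)$ this says $N_a(\lambda)\gtrsim M(\lambda)$ for every $\lambda$, which converts (again using $\rho(t)/t^\beta$ decreasing) into $a_n\gtrsim c_n$. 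Combining the two bounds gives $a_n\asymp 1/\rho(n)$.

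I expect the crux to be exactly the lower bound: the far-tail estimate and the quantitative smallness of the power term are what prevent $(a_n)$ from being uniformly too small while still matching all the concave functionals, and everything else is bookkeeping with the regularity of $\rho$. The only delicacy is the scale shift $\delta\mapsto\epsilon\delta$ forced by the tail, but since $\rho^{-1}(c\,x)\asymp\rho^{-1}(x)$ this shift costs only absolute constants and does not affect the final comparison.
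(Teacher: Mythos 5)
Your proof is correct and follows essentially the same route as the paper's: the same concave test functions $h_\delta(t)=\min\left(t,\delta^{1-p}t^{p}\right)$, the same computation $\sum_n h_\delta\left(1/\rho(n)\right)\asymp\delta\,\rho^{-1}(1/\delta)$, and the same counting-function conversion for the upper and lower bounds. The only (inessential) variation is in the lower bound, where the paper truncates the head at $K\delta$ and shows that $\delta^{1-p}\sum_{a_n\geq K\delta}a_n^{p}$ is an arbitrarily small fraction of $\delta\rho^{-1}(1/\delta)$, while you bound the full head by $\left(N_a(\delta)/M(\delta)\right)^{1-\beta p}\delta M(\delta)$ and close with a bootstrap inequality.
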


\begin{proof}
Let $\delta >0$ and let $h$ be the concave function given by 

\[
h(t) =\left \{
\begin{array}{ccc}
& t  &  t\in (0,\delta) \\\\
 & \delta ^{1-p}t^p &  t\geq \delta.\\
\end{array}
\right.
\]
Clearly $h(t)/t^p$ is increasing. Then (\ref {AIconcave}) implies
\begin{multline}\label {AI2}
\frac{1}{B}\left ( \displaystyle \sum _{\rho (n) > 1/\delta}\frac{1}{\rho (n)} + \delta ^{1-p}\sum _{\rho (n) \leq 1/\delta} \frac{1}{\rho(n) ^p} \right )  \leq \displaystyle \sum _{a_n > \delta }a_n + \delta ^{1-p } \displaystyle \sum _{a_n \geq \delta }a_n^p  \\ \leq B\left ( \displaystyle \sum _{\rho (n) > 1/\delta}\frac{1}{\rho (n)} + \delta ^{1-p}\sum _{\rho (n) \leq 1/\delta} \frac{1}{\rho(n) ^p} \right )
 \end{multline}
Let $N(\delta) =\text{Card} \{ n: a_n \geq \delta \}$.
It is clear that 
$\delta N(\delta)\leq\delta ^{1-p } \displaystyle \sum _{a_n \geq \delta }a_n^p  $. Then
$$ \delta N(\delta)\lesssim \displaystyle \sum _{\rho (n) > 1/\delta}\frac{1}{\rho (n)} + \delta ^{1-p}\sum _{\rho (n) \leq 1/\delta} \frac{1}{\rho(n) ^p}  .$$
Using the fact that $\rho (n)/n^\gamma$ is increasing and $\rho (n)^p /n^{p \beta}$ is decreasing with $\gamma  >1$ and $ p\beta \in (0,1)$, we get 
$$
\displaystyle \sum _{\rho (n) \geq 1/\delta}\frac{1}{\rho (n)} \asymp \delta \rho ^{-1}(1/\delta) \ \mbox{and }\ \delta ^{1-p}\sum _{\rho (n) \leq 1/\delta} \frac{1}{\rho(n) ^p} \asymp \delta \rho ^{-1}(1/\delta) .
$$
Then $\delta N(\delta) \lesssim \delta \rho ^{-1}(1/\delta)$, which implies that $a_n \lesssim 1/\rho (n)$. 

For the reverse inequality we repeat the argument used in the proof of Lemma \ref {A1}. Indeed, one can verify that 
$$
\displaystyle \sum _{a_n < \delta /K}a_n +\delta ^{1-p} \displaystyle \sum _{a_n \geq K\delta}a^p _n \leq C(K)\delta \rho ^{-1}(1/\delta),\ \mbox{with}\  \displaystyle \lim _{K\to \infty}C(K) = 0.
$$
So, from (\ref {AI2}) we obtain 
\begin{eqnarray*}
\delta ^{1-p}/B \sum _{\rho (n) \leq 1/\delta} \frac{1}{\rho(n) ^p} & \leq  & \displaystyle \sum _{a_n < \delta }a_n + \delta ^{1-p } \displaystyle \sum _{a_n \geq \delta }a_n^p  \\
&\leq & \displaystyle \sum _{a_n < \delta /K}a_n + \delta ^{1-p} \displaystyle \sum _{a_n \geq K\delta}a^p_n +\displaystyle \sum _{ \delta /K\leq a_n < \delta}a_n+ \delta ^{1-p } \displaystyle \sum _{\delta \leq a_n < K\delta }a_n^p \\
 & \leq  & C(K)\delta \rho ^{-1}(1/\delta) +C(K,p) \delta N(\delta/K),
\end{eqnarray*}
Taking into account that $$\delta ^{1-p}\sum _{\rho (n) \leq 1/\delta} \frac{1}{\rho(n) ^p} \asymp \delta \rho ^{-1}(1/\delta),$$
 we get , for large $K$, that 
$$
\rho ^{-1}(1/\delta) \lesssim N(\delta /K),
$$
which implies that ${1}/{\rho (n)}\lesssim a_n$. This completes the proof.\\
\end{proof}

\section{Estimates of the trace of $ h(T_\mu)$}\label {Trace}
\subsection{Convex case}
The following result is implicitly proved in \cite {EMMN}. Here we give a direct and short proof.
\begin{thm}\label {Traceconvex}
Let $\omega \in \cW$ and let $(R_n)\in \cL _\omega$.  Let $\mu$ be a positive Borel measure on $\Omega$ such that $T_\mu$ is compact on $\cA ^2_\omega$. Let $h$ be a convex increasing function such that $h(0)=0$. We have 
$$ \displaystyle \sum _{n}h\left (\frac{1}{B}a_n(\mu)\right )\leq \displaystyle \sum _{n}h\left (\lambda _n (T_\mu )\right ) \leq \displaystyle \sum _{n}h\left (Ba_n(\mu)\right ).$$
Where  $B$ is a positive constant which depends on $\omega$ and $(R_n)$.
\end{thm}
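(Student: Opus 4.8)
The two inequalities are proved separately, and the only facts about the kernel I would use are two elementary pointwise estimates (in particular no off-diagonal decay is needed). Write $k_{z}=K_{z}/\|K_{z}\|$ for the normalized reproducing kernel and recall $K(z,z)\omega^{2}(z)=1/\tau^{2}(z)$, so that $\mathrm{Tr}(T_{\mu|E})=\int_{E}K(z,z)\omega^{2}\,d\mu$ for every Borel set $E$. Since $\tau(z)\asymp\tau(z_{j})$ on $R_{j}=D(z_{j},\delta\tau(z_{j}))$ and $A(R_{j})\asymp\tau^{2}(z_{j})$, this already gives $\mathrm{Tr}(T_{\mu|R_{j}})\asymp a_{j}(\mu)$. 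Moreover, for $\delta$ small enough that $|\zeta-z_{j}|\le a\tau(z_{j})$ when $\zeta\in R_{j}$, the kernel comparison (\ref{C3}) yields $|k_{z_{j}}(\zeta)|^{2}\omega^{2}(\zeta)\ge c/A(R_{j})$ for some $c=c(\omega)>0$, whence
\[
\langle T_{\mu|R_{j}}k_{z_{j}},k_{z_{j}}\rangle=\int_{R_{j}}|k_{z_{j}}|^{2}\omega^{2}\,d\mu\ge c\,a_{j}(\mu).
\]

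For the upper bound the plan is to first establish the Ky Fan majorization $\sum_{k=1}^{n}\lambda_{k}(T_{\mu})\le C\sum_{k=1}^{n}a_{k}(\mu)$ for all $n$, and then transfer it through $h$. Splitting $\mu=\mu_{n}+(\mu-\mu_{n})$ and using that $S\mapsto\sum_{k=1}^{n}\lambda_{k}(S)$ is a norm on self-adjoint compacts (hence subadditive), I get $\sum_{k=1}^{n}\lambda_{k}(T_{\mu})\le\sum_{k=1}^{n}\lambda_{k}(T_{\mu_{n}})+\sum_{k=1}^{n}\lambda_{k}(T_{\mu-\mu_{n}})$. The first term is at most $\mathrm{Tr}(T_{\mu_{n}})\le\sum_{j=1}^{n}\mathrm{Tr}(T_{\mu|R_{j}(\mu)})\asymp\sum_{j=1}^{n}a_{j}(\mu)$, and the second is at most $n\|T_{\mu-\mu_{n}}\|\le Cn\,a_{n+1}(\mu)\le C\sum_{k=1}^{n}a_{k}(\mu)$ by Lemma \ref{restriction}(2). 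With the decreasing sequences $\lambda_{k}(T_{\mu})$ and $Ca_{k}(\mu)$, Lemma \ref{A0} applied with $p=1$ (so $h(t^{p})=h(t)$ is the given convex $h$) gives $\sum_{k=1}^{n}h(\lambda_{k}(T_{\mu}))\le\sum_{k=1}^{n}h(Ca_{k}(\mu))$; letting $n\to\infty$ yields the right-hand inequality with $B=C$.

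For the lower bound I would exploit that the enlarged discs $D(z_{n},B_{\omega}\delta\tau_{\omega}(z_{n}))$ have finite multiplicity $N$, so the intersection graph of $(R_{n})$ has bounded degree and its index set splits into $M\le N+1$ color classes $I_{1},\dots,I_{M}$ with $R_{j}\cap R_{j'}=\emptyset$ for distinct $j,j'$ in the same class. Fixing a class $I_{c}$, disjointness gives $\sum_{j\in I_{c},\,j\le n}T_{\mu|R_{j}}=T_{\mu|\cup_{j}R_{j}}\le T_{\mu}$, so by Weyl monotonicity (Lemma \ref{operator}) and the convex Rotfel'd inequality (Lemma \ref{Traceinequality}(1)),
\[
\mathrm{Tr}(h(T_{\mu}))\ge\sum_{\substack{j\in I_{c}\\ j\le n}}\mathrm{Tr}\big(h(T_{\mu|R_{j}})\big)\ge\sum_{\substack{j\in I_{c}\\ j\le n}}h\big(\|T_{\mu|R_{j}}\|\big)\ge\sum_{\substack{j\in I_{c}\\ j\le n}}h\big(c\,a_{j}(\mu)\big),
\]
using $\mathrm{Tr}(h(S))\ge h(\lambda_{1}(S))=h(\|S\|)$ in the middle and $\|T_{\mu|R_{j}}\|\ge\langle T_{\mu|R_{j}}k_{z_{j}},k_{z_{j}}\rangle\ge c\,a_{j}(\mu)$ at the end. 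Letting $n\to\infty$ and summing over the $M$ classes gives $M\,\mathrm{Tr}(h(T_{\mu}))\ge\sum_{j}h(c\,a_{j}(\mu))$; since $h$ is convex with $h(0)=0$ it is star-shaped, $h(t/M)\le h(t)/M$, so $\mathrm{Tr}(h(T_{\mu}))\ge\sum_{j}h\big(\tfrac{c}{M}a_{j}(\mu)\big)$, which is the left-hand inequality once $B\ge M/c$. Taking $B=\max(C,M/c)$ serves both bounds, and $B$ depends only on $\omega$ and $(R_{n})$.

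The step I expect to be the crux is the lower bound: a single disc $R_{j}$ may carry all of its $\mu$-mass near its boundary, so shrinking to the pairwise disjoint inner discs would throw that mass away. The coloring together with Rotfel'd's inequality is exactly what resolves this—it keeps the \emph{full} discs $R_{j}$ (and hence all of $\mu(R_{j})$) while still producing, within each color, an operator dominated by $T_{\mu}$—and the only cost, the multiplicity factor $M$, is absorbed by the star-shapedness of $h$.
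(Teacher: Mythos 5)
Your proof is correct, and it diverges from the paper's in a way worth noting. For the lower bound the two arguments are essentially the same engine — Rotfel'd's convex trace inequality applied to the restrictions $\mu_{|R_j}$, followed by testing $T_{\mu_{|R_j}}$ against the normalized kernel $k_{z_j}$ — except that the paper avoids your coloring step by observing directly that $\sum_j \mu_{|R_j}\le N\mu$ as measures, hence $\sum_j T_{\mu_{|R_j}}\le N T_\mu$, and then applies Rotfel'd to the whole family at once; your coloring plus the star-shapedness of $h$ reaches the same constant-loss conclusion and is equally valid (the bounded-degree hypothesis on the enlarged discs does give an $(N+1)$-coloring). The upper bound is where you take a genuinely different route: the paper dominates $T_\mu$ by the "smoothed" Toeplitz operator $T_\nu$ with $d\nu=\sum_j a_j(\mu)\,dA_{|bR_j(\mu)}$, expands $\lambda_n(T_\nu)=\langle T_\nu f_n,f_n\rangle$ in an eigenbasis of $T_\nu$, and pushes $h$ inside via Jensen's inequality against the sub-probability weights $\frac1N\int_{R_k}|f_n|^2\,dA_\omega$, finishing with $\sum_n|f_n(z)|^2=\|K_z\|^2$. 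You instead first prove the Ky Fan weak majorization $\sum_{k\le n}\lambda_k(T_\mu)\lesssim\sum_{k\le n}a_k(\mu)$ — by splitting off $\mu_n$, bounding $\mathrm{Tr}(T_{\mu_n})$ by the exact trace formula $\int K(z,z)\omega^2\,d\mu_n$, and controlling the tail by $n\|T_{\mu-\mu_n}\|\le Cn\,a_{n+1}(\mu)$ — and then transfer it through $h$ by Lemma \ref{A0} with $p=1$. This is more elementary (no Jensen over the lattice, no auxiliary measure $\nu$) and it front-loads the partial-sum estimate that the paper only extracts later (Corollary \ref{GA} and Theorem \ref{partialsum}) via the harder concave case; the trade-off is that your route is specific to convex $h$, which is all this theorem needs. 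The only cosmetic slip is calling $S\mapsto\sum_{k\le n}\lambda_k(S)$ a norm on self-adjoint compacts — it is merely sublinear there — but on the positive operators you apply it to it coincides with the Ky Fan norm, so subadditivity holds as claimed.
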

\begin{proof}
We will use here the same notations as in Lemma \ref {restriction}. Since $\displaystyle \lim _{n\to \infty}a_n(\nu) = 0$, $T_\nu$ is a compact operator.
Let $(f_n)_{n\geq1}$ be an orthonormal basis of  eigenfunctions of $T_\nu$. We have
\begin{eqnarray*}
\displaystyle \sum _{n\geq 1} h(\lambda _n(T_\mu))& \leq &\displaystyle \sum _{n\geq 1} h(C\lambda _n(T_\nu )) \\
& =& \displaystyle \sum _{n\geq 1} h(C\langle T_\nu  f_n, f_n \rangle) \\
& = &  \displaystyle \sum _{n\geq 1} h\left (C\displaystyle \int _\Omega  |f_n(z)|^2   \omega ^2 (z)d\nu(z)  \right) \\
& \leq  &  \displaystyle \sum _{n\geq 1} h\left (\displaystyle \sum _k NCa_k(\mu)\displaystyle \int _{R_k(\mu)} |f_n(z)|^2   dA_\omega(z)  \right ), \\
\end{eqnarray*}
where $N$ is the multiplicity of $(bR_n)_n$.\\
Since $h$ is convex, by Jensen's inequality, we get 
\begin{eqnarray*}
 \sum _{n\geq 1} h(\lambda _n(T_\mu) )& \leq   & \sum_{n,k}h(N^2Ca_k(\mu)) \frac{1}{N}\int _{R_k(\mu)} |f_n(z)|^2 dA_\omega(z) \\
 &= & \displaystyle \sum_{k}h(N^2Ca_k(\mu))  \frac{1}{N}  \int _{R_k(\mu)} \sum _n |f_n(z)|^2 dA_\omega(z) \\
&\lesssim & \sum_{k}h(N^2Ca_k(\mu))     \displaystyle \int _{R_k(\mu)} \| K_z \| ^2dA_\omega(z) \\
&\lesssim &   \sum_{k}h(N^2Ca_k(\mu)).
\end{eqnarray*}
Conversely, let ${\bar \mu} _j =  \mu _{|R_j(\mu)}$ and put ${\bar \mu} = \displaystyle \sum _{j\geq 1}{\bar \mu} _j$. We have  $ T_{{\bar \mu}} \leq N T_\mu$. So, by Lemma \ref {restriction} and Lemma \ref {Traceinequality}, we have
\[
\begin{array}{lll}
 \mbox{Tr} (h(T_\mu)) & \geq  & \mbox{Tr} (h(\frac{1}{N}T_{\bar \mu}))
 \geq   \displaystyle \sum _{j\geq 1}\mbox{Tr } \left (  h(\frac{1}{N}T_{{\bar \mu} _j})\right )\\
& \geq &  \displaystyle \sum _{j\geq 1}  h(\frac{1}{N}\lambda _1(T_{{\bar \mu} _j}))
 =   \displaystyle \sum _{j\geq 1}h( \frac{1}{N}\|T_{{\bar \mu} _j}\|)\\
& \geq &  \displaystyle \sum _{j\geq 1}h\left (\frac{1}{N} \langle T_{{\bar \mu} _j} \frac{K_{z_j}}{\| K_{z_j}\|}, \frac{K_{z_j}}{\| K_{z_j}\|} \rangle \right ) 
\end{array}
\]
where $z_j$ is the center of  $R_j(\mu)$.\\
Now, since 
\begin{eqnarray*}\label {T2}
\langle T_{{\bar \mu} _j} \frac{K_{z_j}}{\| K_{z_j}\|},  \frac{K_{z_j}}{\| K_{z_j}\|}\rangle  =  \displaystyle  \int _{R_j(\mu)} \left | \frac{K_{z_j}(\zeta)}{\| K_{z_j}\|} \right |^2\omega^2(z)d\mu(z) \asymp  a_j(\mu),\\
\end{eqnarray*}
we obtain the second inequality. This ends the proof.
\end{proof}

\subsection{Concave case}
Theorem A will be obtained from the following result.
\begin{thm}\label {TConcave} Let $\omega \in \cW$ and let $(R_n)\in \cL _\omega$.  Let $\mu$ be a positive Borel measure on $\Omega$ such that $T_\mu$ is compact on $\cA ^2_\omega$. Let $h$ be a concave increasing function such that $h(0)=0$. We have 
$$\frac{1}{B}\displaystyle \sum _{n\geq 1}h\left (a_n(\mu)\right )\leq \displaystyle \sum _{n\geq 1}h\left (\lambda _n(T_\mu)\right ) \leq  B\displaystyle \sum _{n\geq 1}\displaystyle \sum _{k\geq 0} h\left (a_n(\mu) e^{-\gamma k}\right ).$$
In addition, if $h(t)/t^{p}$ is increasing for some $p \in (0,1)$, then 
$$
 \displaystyle \sum _ {n\geq 1} h\left (\lambda _n(T_\mu)\right ) \leq \frac{B}{p}\displaystyle \sum _{n\geq 1} h\left (a_n(\mu)\right ),
$$
where  $B, \gamma >0$ are constants which depend only on $\omega$ and $(R_n)$.
\end{thm}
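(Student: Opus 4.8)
The plan is to prove the two upper inequalities and the lower inequality separately, reducing each to a \emph{single-box} (local) estimate and then reassembling by means of Lemma \ref{Traceinequality}. For the two upper bounds I would start from Lemma \ref{restriction}(1), which gives $T_\mu\le CT_\nu$ with $d\nu=\sum_j a_j(\mu)\,dA_{|bR_j(\mu)}$; by Lemma \ref{operator} we get $\lambda_n(T_\mu)\le C\lambda_n(T_\nu)$, and since $h$ is increasing, $\sum_n h(\lambda_n(T_\mu))\le \mathrm{Tr}\,h(CT_\nu)$. Writing $CT_\nu=\sum_j (Ca_j(\mu))\,S_j$ with $S_j:=T_{\chi_{bR_j(\mu)}dA}$ (each $S_j$ is positive and trace class, and the series converges in operator norm by Lemma \ref{restriction}(2)) and applying the concave part of Lemma \ref{Traceinequality}, I obtain
\[
\sum_n h(\lambda_n(T_\mu))\le \sum_j \sum_{m\ge 1} h\bigl(Ca_j(\mu)\,\lambda_m(S_j)\bigr).
\]
Everything then hinges on a uniform geometric decay of the eigenvalues of the single-box operators $S_j$.

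The key local lemma I would establish is: there exist $C',\gamma>0$, depending only on $\omega$ and $(R_n)$, with $\lambda_{m+1}(S_j)\le C'e^{-\gamma m}$ for all $j,m$. Fix the center $z_j$ and let $L_m=\{f\in\cA^2_\omega:\ f^{(i)}(z_j)=0,\ 0\le i\le m-1\}$, of codimension $m$. For $f\in L_m$ the function $f$ vanishes to order $m$ at $z_j$, so a Schwarz/Cauchy estimate on the slightly larger disc $2bR_j$ gives $|f(z)|\le 2^{-m}\max_{\partial(2bR_j)}|f|$ on $bR_j$; combining $|f(w)|\le\|K_w\|\,\|f\|$ with the near-diagonal comparabilities $\|K_w\|\asymp\|K_{z_j}\|$, $\omega(w)\asymp\omega(z_j)$, $\tau(w)\asymp\tau(z_j)$ from \eqref{C3} and \eqref{C4}, one finds $\langle S_jf,f\rangle=\int_{bR_j}|f|^2\omega^2dA\le C'4^{-m}\|f\|^2$. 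The min-max principle then yields $\lambda_{m+1}(S_j)\le C'4^{-m}$, uniformly in $j$; crucially this uses only on/near-diagonal information, which is exactly why no off-diagonal kernel estimates are needed. Inserting this decay and using $h(\kappa t)\le \kappa\,h(t)$ for $\kappa\ge1$ (concavity with $h(0)=0$) gives the general upper bound $\sum_n h(\lambda_n(T_\mu))\le B\sum_n\sum_{k\ge0}h(a_n(\mu)e^{-\gamma k})$. When moreover $h(t)/t^p$ is increasing, the hypothesis forces $h(a_ne^{-\gamma k})\le e^{-\gamma pk}h(a_n)$, so summing the geometric series $\sum_{k\ge0}e^{-\gamma pk}=(1-e^{-\gamma p})^{-1}\le C_\gamma/p$ produces the refined bound with constant $B/p$.

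For the lower bound, set $\alpha_j:=\lambda_1(T_{\mu_{|R_j(\mu)}})=\|T_{\mu_{|R_j(\mu)}}\|$; testing against $K_{z_j}/\|K_{z_j}\|$ exactly as in the proof of Theorem \ref{Traceconvex}, together with Lemma \ref{restriction}(2), gives $\alpha_j\asymp a_j(\mu)$, so by concavity it suffices to prove $\sum_n h(\lambda_n(T_\mu))\gtrsim \sum_j h(\alpha_j)$. I would colour the lattice into a bounded number $N'$ of subfamilies of pairwise disjoint boxes; for a subfamily $I_c$ the measure $\mu^{(c)}=\sum_{j\in I_c}\mu_{|R_j(\mu)}$ satisfies $\mu^{(c)}\le\mu$, hence $T_{\mu^{(c)}}\le T_\mu$ and $\sum_n h(\lambda_n(T_{\mu^{(c)}}))\le\sum_n h(\lambda_n(T_\mu))$ by Lemma \ref{operator}, so it is enough to treat one disjoint subfamily. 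There I would bound the eigenvalue counting function from below: testing $T_{\mu^{(c)}}$ on the span of the separated normalized reproducing kernels $K_{z_j}/\|K_{z_j}\|$ with $\alpha_j>t$ and invoking min-max, I expect $\#\{n:\lambda_n(T_{\mu^{(c)}})>ct\}\gtrsim\#\{j\in I_c:\alpha_j>t\}$ for a fixed $c>0$; the layer-cake identity $\sum_m h(\sigma_m)=\int_0^\infty h'(t)\,\#\{m:\sigma_m>t\}\,dt$ then converts this into $\sum_n h(\lambda_n(T_{\mu^{(c)}}))\gtrsim\sum_{j\in I_c}h(\alpha_j)$, and summing over the $N'$ colours concludes.

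The main obstacle is precisely this last counting-function lower bound. It cannot be deduced from the convex estimate of Theorem \ref{Traceconvex}: those inequalities only control the tail integrals $\int_\delta^\infty\#\{\cdot>s\}\,ds$, and controlling these does not control the concave sum $\int_0^\infty h'(t)\,\#\{\cdot>t\}\,dt$ weighted by the \emph{decreasing} function $h'$ (for instance $y=(1,0,0,\dots)$ versus $x=(1/m,\dots,1/m,0,\dots)$ have comparable tail integrals while $\sum h(x_n)$ blows up for $h=\sqrt{\,\cdot\,}$). Consequently the lower bound genuinely requires quantitative near-orthogonality of the reproducing kernels at a separated lattice — a Riesz-sequence lower bound, beyond the mere Bessel (finite-overlap) bound — and the technical heart is to control the Gram matrix of the kernels $K_{z_j}/\|K_{z_j}\|$ well enough to run the min-max argument.
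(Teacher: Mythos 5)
Your two upper bounds are essentially correct and close to the paper's argument: the paper also decomposes $\mu\le\sum_j\mu_{|R_j(\mu)}$, applies the concave half of Lemma \ref{Traceinequality}, dominates each piece by $T_{\nu_j}$ with $d\nu_j=a_j(\mu)dA_{|bR_j(\mu)}$, and invokes the single-box decay $\lambda_k(T_{\nu_j})\lesssim e^{-\gamma k}$ (Lemma \ref{IPS}); the only difference is that you reprove that decay by a min-max argument on the codimension-$m$ subspace of functions vanishing to order $m$ at $z_j$, whereas the paper deduces it from the Lin--Rochberg bound $\|T_{\nu_j}\|_p^p\le C/p$ and optimizes in $p$. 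Your route is a legitimate, arguably more self-contained, alternative, and your derivation of the $B/p$ constant from $h(te^{-\gamma k})\le e^{-p\gamma k}h(t)$ matches the paper.

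The lower bound, however, has a genuine gap, and you have correctly located it yourself: the counting-function inequality $\#\{n:\lambda_n(T_{\mu^{(c)}})>ct\}\gtrsim\#\{j\in I_c:\alpha_j>t\}$ is asserted, not proved, and the route you propose for it is not available under the hypotheses of the theorem. A Riesz-sequence lower bound for the normalized kernels $K_{z_j}/\|K_{z_j}\|$ at a lattice in $\cL_\omega$ cannot hold: condition \eqref{C3} forces $|\langle K_{z_j},K_{z_k}\rangle|\gtrsim\|K_{z_j}\|\,\|K_{z_k}\|$ for neighbouring centres, so adjacent normalized kernels are nearly \emph{parallel}, not nearly orthogonal, and passing to a subfamily of pairwise disjoint boxes does not change this (disjointness only separates centres at scale $\asymp\tau$, which is exactly the near-diagonal regime). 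To get genuine near-orthogonality you would have to sparsify drastically and, to control what is lost, you would need off-diagonal decay of $K$ — which the class $\cW$ does not provide and which the paper explicitly advertises avoiding. The paper's proof of the lower bound is structurally different and sidesteps the issue entirely: it introduces the renormalized measure $d\tilde\mu=\sum_k(NKa_k(\mu))^{-1}\mu_{|R_k(\mu)}$, shows $\|T_{\tilde\mu}\|\le1$ for $K$ large, expands $\langle T_\mu f_n,f_n\rangle$ over the eigenbasis $(f_n)$ of $T_\mu$ itself, applies the concavity inequality $h(\sum_kc_kx_k)\ge\sum_kc_kh(x_k)$ (valid since $\sum_kc_k=\langle T_{\tilde\mu}f_n,f_n\rangle\le1$), and then sums over $n$ first using $\sum_n|f_n(z)|^2=\|K_z\|^2$ to produce $\sum_kh(CKa_k(\mu))\int_{R_k}\|K_z\|^2\omega^2\,d\tilde\mu\asymp\sum_kh(a_k(\mu))$. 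This uses only the on-diagonal information already packaged in Lemma \ref{restriction}, and is the missing idea your proposal needs.
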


We will need the following lemma in the proof of Theorem \ref {TConcave}.
\begin{lem}\label {IPS}
Let $\omega \in \cW$ and let $(R_n)\in \cL _\omega$. Let $\nu _n= dA _{|R_n}$, then $T_{\nu_n} $is compact on $\cA^2_\omega$ and 
$$
\lambda _k (T_{\nu_n}) \leq Be^{-\gamma k},
$$
where $B,\gamma >0$  depend on $\omega$ and $(R_n)$.
\end{lem}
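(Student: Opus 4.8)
The plan is to realize $T_{\nu_n}$ as $R^{*}R$ for the restriction operator $R:\cA^2_\omega\to L^2(R_n,dA_\omega)$, $Rf=f_{|R_n}$, and then to bound the singular values of $R$ by approximating it by low‑rank operators built from Taylor polynomials. First observe that $\langle T_{\nu_n}f,f\rangle=\int_{R_n}|f|^2\omega^2\,dA=\|Rf\|^2$, so $T_{\nu_n}=R^{*}R$ and $\lambda_k(T_{\nu_n})=s_k(R)^2$. Compactness is immediate from finite trace:
\[
\mathrm{Tr}(T_{\nu_n})=\int_{R_n}\|K_\zeta\|^2\omega^2(\zeta)\,dA(\zeta)=\int_{R_n}\tau^{-2}(\zeta)\,dA(\zeta)\asymp \frac{A(R_n)}{\tau^2(z_n)}\asymp \delta^2,
\]
where I use $\tau(\zeta)\asymp\tau(z_n)$ on $R_n$ from \eqref{C3} and $A(R_n)\asymp\tau^2(z_n)$. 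Hence $R$ is Hilbert--Schmidt, so $T_{\nu_n}$ is trace class and in particular compact.

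For the decay I use the min-max formula $s_{k+1}(R)=\inf\{\|R-S\|:\ \mathrm{rank}(S)\le k\}$. Write $r=\delta\tau(z_n)$ and use the dilation factor $b=(1+B_\omega)/2\in(1,B_\omega)$, so that $\overline{D(z_n,br)}\subset B_\omega R_n\subset\Omega$ for $\delta$ small (guaranteed by the lattice properties and \eqref{C4}). For $f\in\cA^2_\omega$ let $P_kf$ be its Taylor polynomial of degree $<k$ at $z_n$ and set $S_kf=(P_kf)_{|R_n}$; since the range of $S_k$ lies in the $k$-dimensional span of $\{(z-z_n)^j\}_{0\le j<k}$, we have $\mathrm{rank}(S_k)\le k$. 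Writing $f-P_kf=\sum_{j\ge k}c_j(z-z_n)^j$ and estimating $c_j$ by Cauchy's inequality on the circle $|z-z_n|=br$, for $\zeta\in R_n$ one gets
\[
|f(\zeta)-P_kf(\zeta)|\le\sum_{j\ge k}|c_j|\,r^{j}\le\frac{b^{-k}}{1-b^{-1}}\sup_{|z-z_n|=br}|f(z)|,
\]
because $r/(br)=b^{-1}$. This is the source of the geometric decay.

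It remains to control the supremum by $\|f\|_\omega$ and to return to the weighted $L^2$ norm on $R_n$. Applying the mean value inequality \eqref{IM} centered at $z$ (valid since $\tau(z)\asymp\tau(z_n)$ and the relevant disks stay in $\Omega$) yields $|f(z)|^2\le C\,\omega^{-2}(z)\,A(R_n)^{-1}\|f\|_\omega^2$ for $|z-z_n|=br$. Inserting this and integrating gives
\[
\|Rf-S_kf\|^2=\int_{R_n}|f-P_kf|^2\omega^2\,dA\lesssim b^{-2k}\,\frac{\sup_{bR_n}\omega^2}{\inf_{bR_n}\omega^2}\,\|f\|_\omega^2 .
\]
Since $\omega$ is comparable to $\omega(z_n)$ on $bR_n$ (a standard feature of the class $\cW$, already used implicitly in \eqref{IM}), the oscillation ratio is bounded by a constant depending only on $\omega$ and $(R_n)$. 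Thus $s_{k+1}(R)\le Cb^{-k}$, so $\lambda_{k+1}(T_{\nu_n})=s_{k+1}(R)^2\le C^2b^{-2k}$; setting $\gamma=2\log b$ and absorbing the $k=1$ term (bounded by $\|T_{\nu_n}\|\lesssim\delta^2$) into the constant gives $\lambda_k(T_{\nu_n})\le Be^{-\gamma k}$ with $B,\gamma$ depending only on $\omega$ and $(R_n)$.

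The main obstacle is the transfer in the last paragraph: converting the analytic/pointwise decay into the weighted $L^2$ norm on $R_n$ \emph{uniformly in $n$}. This requires controlling the oscillation of $\omega$ on the dilated disks $bR_n$ and verifying that every constant (the $C_1$ of \eqref{IM}, the radius ratio $b$, and the weight comparability) is independent of $n$. By contrast, the geometric factor $b^{-2k}$ is elementary once the Cauchy estimate is set up on the circle $|z-z_n|=br$.
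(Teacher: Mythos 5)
Your overall strategy (writing $T_{\nu_n}=R^{*}R$ for the restriction operator and producing rank-$k$ approximations of $R$ from Taylor polynomials at $z_n$) is a legitimate classical route and is genuinely different from the paper's, which simply invokes Theorem~3.8 of Lin--Rochberg to get $\|T_{\nu_n}\|_p^p\le B(1-\delta^p)^{-1}\lesssim 1/p$ for $p\in(0,1/2)$ and then optimizes $k\lambda_k^p(T_{\nu_n})\le C/p$ over $p$. However, your argument has a genuine gap exactly at the step you flag as the main obstacle: the assertion that ``$\omega$ is comparable to $\omega(z_n)$ on $bR_n$'' is \emph{not} a feature of the class $\cW$, and it fails for examples the lemma must cover. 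The axioms \eqref{C3}--\eqref{C4} give $\tau(\zeta)\asymp\tau(z_n)$ and $|K(\zeta,z_n)|\asymp\|K_\zeta\|\,\|K_{z_n}\|$ near $z_n$; since $\omega^2(z)=1/(\tau^2(z)\|K_z\|^2)$, comparability of $\omega$ on $bR_n$ would require comparability of $\|K_z\|$ there, which is not assumed. For the standard Fock space $\cF^2_\alpha$ one has $\omega^2(z)=\tfrac{\alpha}{\pi}e^{-\alpha|z|^2}$ and $\tau\asymp 1$, so on $D(z_n,b\delta)$ the oscillation ratio $\sup_{bR_n}\omega^2/\inf_{bR_n}\omega^2\asymp e^{4\alpha b\delta|z_n|}$ is unbounded in $n$. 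Your estimate then only yields $s_{k+1}(R)\le C_n b^{-k}$ with $C_n\to\infty$, which is not the uniform-in-$n$ statement of the lemma. (Nor is this comparability ``used implicitly in \eqref{IM}'': that inequality needs only subharmonicity of $|f/K_\zeta|^2$ together with \eqref{C3}.)

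The argument can be repaired by twisting with the reproducing kernel before Taylor-expanding: by \eqref{C3}, $K_{z_n}$ does not vanish on $B_\omega R_n$, so set $g=f\,\|K_{z_n}\|/K_{z_n}$, which satisfies $|g(z)|\asymp\tau(z_n)\,|f(z)|\,\omega(z)$ there; approximate $g$ by its Taylor polynomial $Q_k$ of degree $<k$ at $z_n$ and take $S_kf=\bigl(Q_k\,K_{z_n}/\|K_{z_n}\|\bigr)_{|R_n}$, whose range lies in the $k$-dimensional space $\mathrm{span}\{(z-z_n)^jK_{z_n}\}_{0\le j<k}$. The Cauchy estimate on $|z-z_n|=br$ combined with the pointwise bound $|f(z)|^2\omega^2(z)\lesssim\|f\|_\omega^2/A(R_n)$ then gives $\|Rf-S_kf\|^2\lesssim b^{-2k}\|f\|_\omega^2$ with constants depending only on $\omega$ and the lattice, after which the rest of your proof goes through. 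As written, though, the key uniformity claim is false and the proof does not establish the lemma for all $\omega\in\cW$.
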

\begin{proof}
Following the same proof of Theorem 3.8 of \cite {LR}, there exist $B>0$ and $\delta \in (0,1)$ such that 
$$
\| T _{\nu _n}\|_p^p \leq B(1-\delta ^p)^{-1}\leq \frac{C(\delta, B)}{p}, \qquad (p\in (0,1/2)).
$$
It implies that   $k\lambda ^p_k(T_{\nu _n})\ \leq  \frac{C}{p}$, where $C=C(\delta, B)$. Then, for $\frac{1}{p} = \frac{k}{eC}$, we obtain the desired result, with $\gamma = \frac{1}{eC}$.
\end{proof}

\begin{proof}[Proof of Theorem \ref {TConcave}]
We use the same notations as in Lemma \ref {restriction}. Let ${\tilde \mu}$ be the positive Borel measure given by
$$d{\tilde \mu} = \displaystyle \sum_k\frac{1}{NKa_k(\mu)}\mu _{|R_k(\mu)},\quad \mbox{where} \ K>0$$
and with the convention $\frac{1}{NKa_k(\mu)}\mu _{|R_k(\mu)}=0$ if $a_k(\mu)=0$.
By Lemma \ref {restriction},  $T_{{\tilde \mu}}$ is bounded and  $$\| T_{{\tilde \mu}}\| \leq  C\displaystyle \sup _{n}\frac{{\tilde \mu}(R_n)}{A(R_n)} .$$
We have
\begin{eqnarray*}
\frac{{\tilde \mu}(R_n)}{A(R_n)}& = & \frac {1}{NK}\displaystyle \sum _{k:\ R_k\cap R_n \neq \emptyset }\frac{\mu(R_n\cap R_k)}{a_k(\mu)A(R_n)}\\
&\leq & \frac {1}{NK}\displaystyle \sum _{k:\ R_k\cap R_n \neq \emptyset }\frac{\mu(R_k)}{a_k(\mu)A(R_n)}\\
& \asymp &  \frac{1}{NK}\displaystyle \sum _{k:\ R_k\cap R_n \neq \emptyset }\frac{\mu(R_k)}{a_k(\mu)A(R_k)}\\
&\lesssim & \frac{1}{K}.
 \end{eqnarray*}
Then for a large $K$ we have $\| T_{{\tilde \mu}}\| \leq 1$.\\
Let $(f_n)_{n\geq1}$ be an orthonormal basis of $\cA^{2}_\omega$ of  eigenfunctions of $T_\mu$. We have
\begin{eqnarray*}
\displaystyle \sum _{n\geq 1} h(\lambda _n(T_\mu) ) & =& \displaystyle \sum _{n\geq 1} h( \langle T_\mu f_n, f_n \rangle) \\
& = &  \displaystyle \sum _{n\geq 1} h\left (\displaystyle \int _{\Omega} |f_n(z)|^2  \omega ^2(z) d\mu (z) \right) \\
& \geq  &  \displaystyle \sum _{n\geq 1} h\left (\frac{1}{N}\displaystyle \sum _k \displaystyle \int _{R_k(\mu)}    |f_n(z)|^2   \omega ^2(z)d\mu (z) \right ) \\
& = &  \displaystyle \sum _{n\geq 1} h\left (\displaystyle \sum _kC Ka_k(\mu) \displaystyle \int _{R_k(\mu)}     |f_n(z)|^2   \omega ^2(z) d{\tilde \mu}(z)\right ) \\
&\geq &  \displaystyle \sum _{n\geq 1}\displaystyle \sum _kh(CKa_k(\mu)) \displaystyle \int _{R_k(\mu)}   |f_n(z)|^2   \omega ^2(z)d{\tilde \mu}(z)\\
&= & \displaystyle \sum _kh(CKa_k(\mu)) \displaystyle \sum _{n\geq 1} \displaystyle \int _{R_k(\mu)}  |f_n(z)|^2   \omega ^2(z)d{\tilde \mu}(z)\\
&= & \displaystyle \sum _kh(CKa_k(\mu)) \displaystyle \int _{R_k(\mu)}  \|K_z\|^2   \omega ^2(z) d{\tilde \mu}(z)\\
&\asymp &  \displaystyle \sum _kh(a_k(\mu)).\\
\end{eqnarray*}
Which gives the first inequality.\\

Let ${\bar \mu } _j  = \mu _{|R_j(\mu )}$. Since $\mu \leq  \displaystyle \sum _{j=1}^\infty {\bar \mu } _j$,  by Lemma \ref {restriction} and Lemma \ref {Traceinequality} we have 
$$
\mbox{Tr} (h(T_{\mu }))   \leq \mbox{Tr} (h( \displaystyle \sum _{j=1}^\infty T_{{\bar \mu } _j })) \leq   \displaystyle \sum _{j=1}^\infty  \mbox{Tr} (h (T_{{\bar \mu }  _j}) ).
$$
By Lemma  \ref {restriction},  $T_{{\bar \mu } _j} \leq C T_{\nu _j }$, where  $d\nu_j =a_j(\mu ) dA_{|bR_j(\mu)}$. Then by Lemma \ref {IP}, we have   
$$
\mbox{Tr}  (h( T_{{\bar \mu }  _j} )) \leq \mbox{Tr}  \left ( h(CT_{\nu _j}) \right ) \lesssim  \displaystyle \sum _{ k =1}^\infty h \left ( Ca_j (\mu )e^{-\gamma k} \right )  \lesssim  \displaystyle \sum _{k =1}^\infty h \left ( a_j (\mu )e^{-\gamma k} \right ) 
$$ 
Then we obtain the second inequality of Theorem \ref {TConcave}.\\

Now we prove the last inequality of Theorem \ref {TConcave} . Since $h(t)/t^p$ is increasing,  we have $$h(a_j(\mu)e^{-\gamma k}) \leq h(a_j(\mu))e^{-p\gamma k}.$$
 It implies that
\begin{eqnarray*}
\mbox{Tr} (h(T_{\mu })) & \lesssim &  \displaystyle \sum _{j=1}^\infty \displaystyle \sum _{k =1}^\infty h \left ( a_j \mu)e^{-\gamma k} \right ) \\
& \lesssim &  \displaystyle \sum _{j=1}^\infty \displaystyle \sum _{k =1}^\infty h(a_j(\mu))e^{-p\gamma k}\\
& \asymp &\frac{1}{p}  \displaystyle \sum _{j=1}^\infty h(a_j(\mu)).
\end{eqnarray*}
And the proof is complete.
\end{proof}

\subsection{Remarks}\label {Remarks 1}
\begin{enumerate}
\item It is proved by P. Lin and R. Rochberg in \cite {LR} that if $\omega \in \cD$ then $T_\mu \in S_p$ if and only if $(a_n(\mu))_n \in \ell ^p$ for all $p\geq 1$. They also proved, for $p\in (0,1)$, that if $(a_n(\mu))_n \in \ell ^p$ then $T_\mu \in S_p$. Since $\cD \subset \cW$ it is clear from Theorem  \ref {TConcave} that the converse is also true. (see \cite {APP} for radial weights). \\
\item For $\omega \in \cR$, the class of weights introduced by K. Seip and E. H. Youssfi in \cite {SY}, Theorem \ref {TConcave} completes the characterization of membership to Schatten classes given in \cite {SY}.\\
\item  The factor $1/p$ in Theorem \ref {TConcave} can not be replaced by $1/p^{1-\varepsilon}$. Indeed, let $\Omega = \DD$ and let $\omega =1$. Suppose that 
$$ \displaystyle \sum _{n\geq 1}\lambda _n^p(T_\mu) \leq \frac{B}{p^{1-\varepsilon}}\displaystyle \sum _{n\geq 1}a_n^p(\mu), \quad (\forall \ p>0),$$
for every positive Borel measure $\mu$ on $\DD$. Then if $\mu $ is of compact support, we have
$$n\lambda _n^p(T_\mu) \leq  \displaystyle \sum _{j\geq 1}\lambda _j^p(T_\mu) \leq \frac{C}{p^{1-\varepsilon}}, \quad  \forall \ p\in (0,1),$$
for some constant $C>0$. This implies that 
$$
\lambda _n(T_\mu)\leq e ^{-Kn^{\frac{1}{1-\varepsilon}}}.
$$
Now, for $d\mu = dA _{|{D(0,\delta)}}$ where $\delta \in (0,1)$, we have  
$$\lambda _n(T_\mu) = 2\displaystyle \int _0^\delta r^{2n+1}dr \asymp \frac{1}{n+1}\delta ^{2n+2},$$
which gives a contradiction.\\
\end{enumerate}
The following corollary is somewhat more general than Theorem A.
\begin{cor} \label {GA}Let $\omega \in \cW$ and let $(R_n)\in \cL _\omega$.  Let $\mu$ be a positive Borel measure on $\Omega$ such that $T_\mu$ is compact on $\cA ^2_\omega$. For $p\in (0,1)$, we have
$$
p ^{1+\varepsilon}C_1  \displaystyle \sum _{j=1}^n  a^p_j(\mu) \leq  \displaystyle \sum _ {j=1}^n \lambda ^p_j(T_\mu)  \leq \frac{C_2}{p} \displaystyle \sum _{j=1}^n  a^p_j(\mu),
$$
where  $C_1$ is a positive constant which depends on $\varepsilon, \omega, (R_n)$ and $C_2$ is a  positive constant which depends on $\omega$ and $(R_n)$.
\end{cor}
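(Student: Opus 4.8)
The two inequalities are of rather different natures, so I would treat them separately, keeping the endpoint exponent $p=1$ as a common backbone. First I would record the partial-sum comparison
$$\tfrac1B\sum_{j=1}^n a_j(\mu)\ \le\ \sum_{j=1}^n\lambda_j(T_\mu)\ \le\ B\sum_{j=1}^n a_j(\mu),\qquad(n\ge1).$$
This follows from Theorem~\ref{Traceconvex}: applied to the convex functions $h_{1,\delta}(t)=(t-\delta)^+$ for every $\delta>0$ it gives $\sum_j(\lambda_j-\delta)^+\le\sum_j(Ba_j-\delta)^+$, and the reverse with $\tfrac1B a_j$ in place of $a_j$; since control of $\sum_j(x_j-\delta)^+$ for all $\delta$ is equivalent to weak submajorisation of the decreasing rearrangements, the displayed inequalities follow (the same mechanism as in Lemma~\ref{A1}).

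\emph{Upper bound.} Here I would not need the backbone. Fix $n$ and set $\mu_n=\mu_{|\cup_{k\le n}R_k(\mu)}$. By Lemma~\ref{restriction}(4), $\lambda_m(T_\mu)\le\lambda_m(T_{\mu_n})+Ca_{n+1}(\mu)$, so the subadditivity of $t\mapsto t^p$ for $p\le1$ gives $\sum_{m=1}^n\lambda_m^p(T_\mu)\le\mathrm{Tr}(T_{\mu_n}^p)+nC^pa_{n+1}^p(\mu)$. Theorem~\ref{TConcave} (the last inequality, with $h(t)=t^p$) bounds the first term by $\lesssim\tfrac1p\sum_m a_m^p(\mu_n)$, and since $\mu_n$ is carried by $n$ boxes of finite multiplicity one has $\sum_m a_m^p(\mu_n)\lesssim\sum_{j\le n}a_j^p(\mu)$. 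As $a_{n+1}(\mu)\le a_m(\mu)$ for $m\le n$, the second term is $\le C^p\sum_{m\le n}a_m^p(\mu)$. Absorbing the bounded term into the dominant $1/p$ yields $\sum_{m\le n}\lambda_m^p(T_\mu)\le\tfrac{C_2}{p}\sum_{m\le n}a_m^p(\mu)$, with $C_2=C_2(\omega,(R_n))$.

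\emph{Lower bound (the crux).} Since $\lambda_m(T_{\mu_n})\le\lambda_m(T_\mu)$ and $a_m(\mu_n)\ge a_m(\mu)$ for $m\le n$ (the $n$ largest box-ratios of $\mu$ are retained by $\mu_n$), it suffices to show $\sum_{m=1}^n\lambda_m^p(T_{\mu_n})\ge p^{1+\varepsilon}C_1\sum_{j=1}^n a_j^p(\mu)$. Two facts are available: the full quasinorm is bounded below by Theorem~\ref{TConcave}, namely $\sum_{m\ge1}\lambda_m^p(T_{\mu_n})\ge\tfrac1B\sum_{j\le n}a_j^p(\mu)$; and the eigenvalues past index $n$ decay, since with $T_{\mu_n}\le CT_{\nu_n}$, $\nu_n=\sum_{j\le n}a_j(\mu)\,dA_{|bR_j(\mu)}$, stripping off the leading eigenvalue of each of the $n$ single-box pieces (a rank $\le n$ perturbation) and invoking Weyl's inequality together with Lemma~\ref{IPS} gives $\sum_{m>n}\lambda_m^p(T_{\mu_n})\lesssim\sum_{j\le n}a_j^p(\mu)\sum_{k\ge2}e^{-p\gamma k}$.

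I expect this last point to be the main obstacle. Both the head and this tail are of the same order $\tfrac1p\sum_j a_j^p(\mu)$, so the crude lower bound $\tfrac1B\sum_j a_j^p(\mu)$ on the full sum cannot simply be reduced by the tail; the $\ell^p$-mass of $T_{\mu_n}$ is genuinely spread over $\asymp n/p$ eigenvalues, and isolating the exact proportion lying in the first $n$ is what produces the sharp power $p^{1+\varepsilon}$. Rather than a soft majorisation argument I would run the backbone cumulatively over the dyadic value-blocks $G_\ell=\{j:a_j(\mu)\asymp 2^{-\ell}a_1(\mu)\}$, on each of which the $p=1$ comparison is near-tight, to produce enough genuinely large eigenvalues among the first $n$; summing the resulting $\ell^p$-contributions over the scales $\ell$ should yield the factor $p^{1+\varepsilon}$, with the $\varepsilon$-slack (and hence the dependence of $C_1$ on $\varepsilon$) entering precisely in this scale-by-scale bookkeeping.
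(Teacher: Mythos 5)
Your upper bound is sound and is essentially the paper's argument: truncate to $\mu_n$, apply the last inequality of Theorem~\ref{TConcave} with $h(t)=t^p$, and control the perturbation $Ca_{n+1}(\mu)$ from Lemma~\ref{restriction}(4) by $p$-subadditivity. The lower bound, however, has a genuine gap, and you have in fact located it yourself: your tail estimate $\sum_{m>n}\lambda_m^p(T_{\mu_n})\lesssim \frac1p\sum_{j\le n}a_j^p(\mu)$ (via the rank-$n$ stripping and Lemma~\ref{IPS}) is of the same order as the head, so nothing can be subtracted, and the replacement you offer --- running the $p=1$ partial-sum backbone over dyadic value-blocks $G_\ell$ --- is only a hope, not an argument. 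There is also a structural obstruction to that plan: the $p=1$ weak majorisation $\frac1B\sum_{j\le n}a_j\le\sum_{j\le n}\lambda_j$ transfers to $\sum_{j\le n}h(\cdot)$ only for \emph{convex} $h$ (this is Lemma~\ref{A0}), and $t\mapsto t^p$ with $p<1$ is concave, so the inequality you need at exponent $p$ simply does not follow from the $p=1$ backbone, block-by-block or otherwise (e.g.\ $a=(1,1)$, $\lambda=(2,0)$ satisfies the $p=1$ comparison but fails it for $p<1$). Indeed the paper's logic runs in the opposite direction: Theorem~\ref{partialsum} (the $h$-version of the partial-sum comparison) is \emph{deduced from} Corollary~\ref{GA}, not used to prove it.

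The missing idea is a self-improvement at a strictly smaller exponent. Fix $q=\frac{1}{1+\varepsilon}<1$ and apply Theorem~\ref{TConcave} to $T_{\mu_n}$ with exponent $pq$; since the eigenvalues decrease, $j\lambda_j^{pq}(T_{\mu_n})\le\frac{B_1}{pq}\sum_{k\le n}a_k^{pq}(\mu)$, and H\"older on the right gives the \emph{pointwise} decay $\lambda_j^p(T_{\mu_n})\le (B_1/q)^{1/q}\,n^{\frac1q-1}p^{-1/q}j^{-1/q}\sum_{k\le n}a_k^p(\mu)$. Summing this over $j>An$ produces a tail bounded by $C(B_1,q)\,p^{-1/q}A^{1-\frac1q}\sum_{k\le n}a_k^p(\mu)$, which --- unlike your tail bound --- decays in the cutoff multiplier $A$. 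One then writes $\frac1B\sum_{j\le n}a_j^p(\mu)\le\sum_{j\le An}\lambda_j^p(T_{\mu_n})+\sum_{j>An}\lambda_j^p(T_{\mu_n})\le A\sum_{j\le n}\lambda_j^p(T_\mu)+C(B_1,q)p^{-1/q}A^{1-\frac1q}\sum_{j\le n}a_j^p(\mu)$ and chooses $A\asymp p^{-1/(1-q)}$ to absorb the tail into the left-hand side; the resulting factor $1/A$ is exactly the source of the degenerating constant $p^{1+\varepsilon}C_1$ and of the dependence of $C_1$ on $\varepsilon$. Without this (or an equivalent mechanism giving a tail that is $o(1/p)$ as the cutoff grows), your lower bound does not close.
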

\begin{proof}
Applying Theorem \ref {TConcave} to $T_{\mu_n}$ and taking into account the multiplicity of $(R_n)$, there exists $B_1>0$ which depends only on $\omega$ and $(R_n)$ such that 
$$
\displaystyle \sum _{j=1}^n \lambda _j^p(T_{\mu_n}) \leq \frac{B_1}{p}\displaystyle \sum _{j=1}^n a_j^p(\mu).
$$ 
By Lemma \ref {restriction},  
$$
\lambda _j(T_{\mu }) \leq \lambda _j(T_{\mu _n}) +Ca_{n+1}(\mu).
$$
We obtain 
$$
\displaystyle \sum_{j=1}^n \lambda _j^p(T_{\mu}) \leq \frac{B_1}{p}\displaystyle \sum _{j=1}^n  (a_j^p(\mu)) +C ^p a_{n+1}^p(\mu)) \leq \frac{B_1(1+C)}{p}\displaystyle \sum _{j=1}^n a_j^p(\mu).
$$ 

\noindent Conversely, let $q\in  (0,1)$.  By Theorem \ref {TConcave}, applied to $T_{\mu_n}$, we have
$$
\displaystyle \sum _{j=1}^\infty  \lambda _j^{pq}(T_{\mu_n})  \leq   \frac{B_1}{pq}\displaystyle \sum _{j=1}^n a_j^{pq}(\mu).
$$
Then 
$$
\lambda _j^p(T_{\mu_n})\leq \left ( \frac{B_1}{jpq}\displaystyle \sum _{k=1}^n a_k^{pq}(\mu) \right ) ^{1/q}\leq \left ( \frac{B_1}{q}\right )^{1/q}  \frac{n^{\frac{1}{q}-1}}{p ^{1/q}j^{1/q}}\displaystyle \sum _{k=1}^n a_k^{p}(\mu) 
$$
Then, for $A>0$ we have
$$
 \displaystyle \sum _{j\geq An+1}  \lambda _j^p(T_{\mu_n}))  \leq   \frac{C(B_1,q)}  {p ^{1/q}A^{\frac{1}{q}-1}}  \displaystyle \sum _{k=1}^n a_k^{p}(\mu) \\
$$
Once time again, by Theorem \ref {TConcave}, we have 
\[
\begin{array}{lll}
 1/B \displaystyle \sum _{j=1}^n a_j^p(\mu)& \leq &\displaystyle \sum _{j=1}^\infty \lambda _j^p(T_{\mu_n})\\
 & \leq &\displaystyle \sum _{j=1}^{An} \lambda _j^p(T_{\mu_n}) +\displaystyle \sum _{j=An+1}^\infty \lambda _j^p(T_{\mu_n}) \\
  & \leq &A \displaystyle \sum _{j=1}^{n}\lambda _j^p(T_{\mu_n}) + \frac{C(B_1,q)}  {p ^{1/q}A^{\frac{1}{q}-1}}  \displaystyle \sum _{j=1}^n a_j^{p}(\mu) \\
  & \leq &A \displaystyle \sum _{j=1}^{n}\lambda _j^p(T_{\mu}) + \frac{C(B_1,q)}  {p ^{1/q}A^{\frac{1}{q}-1}}  \displaystyle \sum _{j=1}^n a_j^{p}(\mu) \\
\end{array}
\]
For $q= \frac{1}{1+\varepsilon}$ and for $A$ big enough we obtain the result.
\end{proof}

Now we can state the following important consequence of Corollary \ref {GA}.
\begin{thm}\label {partialsum}
Let $\omega \in \cW$ and let $(R_n)\in \cL _\omega$.  Let $\mu$ be a positive Borel measure on $\Omega$ such that $T_\mu$ is compact on $\cA ^2_\omega$.  Let $h$ be an increasing function on $[0, +\infty)$ such that $h(0) = 0$ and  $h(t^p)$ is convex for some $p>0$. We have
$$
\displaystyle \sum _{j=1}^n h(\frac{1}{B}a_j(\mu))) \leq  \displaystyle \sum _ {j=1}^n h(\lambda _j(T_\mu) ) \leq \displaystyle \sum _{j=1}^n h(Ba_j(\mu)) ,\quad (n\geq 1),
$$
where  $B>0$ is a positive constant which depends on $\omega$, $(R_n)$ and $p$.
\end{thm}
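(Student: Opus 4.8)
The plan is to deduce the result directly from Corollary \ref{GA} and the majorization Lemma \ref{A0}, so that no new trace estimate is needed; the whole argument is essentially a bookkeeping exercise converting power-sum comparisons into comparisons under $h$.

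First I would reduce to the case $p>1$. The hypothesis provides some $p_0>0$ with $h(t^{p_0})$ convex. Writing $g(s)=h(s^{p_0})$, which is convex and increasing (as $h$ is increasing and $s\mapsto s^{p_0}$ is increasing), one has $h(t^p)=g(t^{p/p_0})$ for every $p\ge p_0$; since $t\mapsto t^{p/p_0}$ is then convex increasing and $g$ is convex increasing, the composition $h(t^p)$ is again convex. Hence $h(t^p)$ is convex for every $p\ge p_0$, and I may fix once and for all some $p>\max(p_0,1)$ and set $q:=1/p\in(0,1)$. The constant $B$ produced below will depend on this $p$ (equivalently on $p_0$), which is exactly what the statement permits.

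The sequences $(\lambda_j(T_\mu))_j$ and $(a_j(\mu))_j$ are positive and decreasing, so Lemma \ref{A0} applies with exponent $p$ (note $1/p=q$) to any increasing positive function whose composition with $t\mapsto t^p$ is convex, in particular to $h$. It therefore suffices to produce a constant $B>0$ for which the two majorization inequalities
$$
\sum_{j=1}^n \lambda_j(T_\mu)^{q}\le \sum_{j=1}^n\bigl(Ba_j(\mu)\bigr)^{q},\qquad \sum_{j=1}^n\Bigl(\tfrac{1}{B}a_j(\mu)\Bigr)^{q}\le \sum_{j=1}^n\lambda_j(T_\mu)^{q}
$$
hold for all $n\ge 1$. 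Feeding the first into Lemma \ref{A0} (with its ``$a_k$'' equal to $\lambda_k(T_\mu)$ and ``$b_k$'' equal to $Ba_k(\mu)$) yields the upper bound $\sum_{j=1}^n h(\lambda_j(T_\mu))\le\sum_{j=1}^n h(Ba_j(\mu))$, while feeding the second (with ``$a_k$'' equal to $\tfrac1B a_k(\mu)$ and ``$b_k$'' equal to $\lambda_k(T_\mu)$) yields the lower bound. Both majorization inequalities are immediate from Corollary \ref{GA} applied with the exponent $q\in(0,1)$ and a fixed $\varepsilon$, say $\varepsilon=1$: it gives $\sum_{j=1}^n\lambda_j^q(T_\mu)\le \frac{C_2}{q}\sum_{j=1}^n a_j^q(\mu)$ and $q^{1+\varepsilon}C_1\sum_{j=1}^n a_j^q(\mu)\le\sum_{j=1}^n\lambda_j^q(T_\mu)$. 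Since $(Ba_j)^q=B^q a_j^q$, the first inequality holds as soon as $B^q\ge C_2/q$, and the second as soon as $B^{-q}\le q^{1+\varepsilon}C_1$; thus $B:=\max\bigl\{(C_2/q)^{1/q},\,(q^{1+\varepsilon}C_1)^{-1/q}\bigr\}$ works and depends only on $\omega$, $(R_n)$ and $p$.

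I would note that the only genuinely delicate point is the bookkeeping of the $p$-dependence of the constant. The constants coming from Corollary \ref{GA} degenerate like $1/q$ and $q^{1+\varepsilon}$ as $q\to 0$, but after raising to the power $1/q=p$ they are absorbed into a single finite $B=B(p)$. This is precisely why the theorem can only claim a $p$-dependent constant and not an absolute one, and why the reduction ensuring $q\in(0,1)$ (so that Corollary \ref{GA}, stated for exponents in $(0,1)$, is applicable) is a necessary preliminary rather than a cosmetic step.
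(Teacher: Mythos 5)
Your proof is correct and follows exactly the route the paper takes: its proof of this theorem is the one-line remark that it follows from Lemma \ref{A0} and Corollary \ref{GA}, and your argument simply supplies the details (the reduction to $p>1$, the choice $q=1/p$, and the absorption of the $q$-dependent constants from Corollary \ref{GA} into $B$ after raising to the power $1/q$). No gaps.
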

\begin{proof}
This is a consequence of Lemma \ref {A0} and Corollary \ref {GA}.
\end{proof}

\begin{proof}[\bf Proof of Theorem A] We prove $(1)$. Suppose that $a_n(\mu) = O(1/\rho (n))$. Let $p\in (0,1)$ such that $pA<1$. We have 
$$
\displaystyle \sum _{k=1}^n\frac{1}{\rho ^p(k)} \asymp n/\rho ^p(n).
$$
By Corollary \ref {GA}, since $a_n(\mu) = O(1/\rho (n))$, we obtain 
$$
n\lambda _n^p(T_\mu) \leq \displaystyle \sum _{k=1}^n\lambda _k^p(T_\mu) \lesssim \displaystyle \sum _{k=1}^na _k^p(\mu) \lesssim n/\rho ^p(n).
$$
This implies that $\lambda _n(T_\mu) = O(1/\rho (n))$. The reverse implication is obtained in the same way.\\
The second assertion comes from Theorem \ref{partialsum} and Lemma \ref{A1}.
\end{proof}

\subsection{Remarks on Theorem A}\label {Remarks 2}
In this section we provide two examples. The first one, shows that the condition $\rho (x) /x^A$ is decreasing for some $A>0$, is necessary and sharp. And in the second example we show that the sequence $(a_n(\mu))$ is not sufficient, in general, to describe the asymptotic behavior of the eigenvalues of $T_\mu$.\\
\begin{enumerate}
\item The conclusion of Theorem A is not valid if $\rho$ increases faster than all polynomials. Namely, suppose that 
\begin{equation} \label {Rq}
\displaystyle \lim _{x\to +\infty }\frac{\rho(2x)}{\rho (x)} =+\infty.
\end{equation}
Let 
\begin{equation}\label {CE}
d\mu (re^{it})= \frac{1}{\rho \left(1/(1-r)\right)}rdrdt.
\end{equation}
The Toeplitz operator $T_\mu$ defined on the unweighted Bergman space $\cA^2(\DD)$ is compact. 
Since $\mu$ is radial, it is easy to see that $f_n= (n+1)^{1/2}z^n$ is an eigenfunction of $T_\mu$ and  for all $M>1$ we have 
\[
\begin{array}{lll}
\lambda _n ( T_\mu )&= &2\pi \displaystyle \int _0^1r^{2n+1} \frac{1}{\rho \left(1/(1-r)\right)}dr\\
& \geq & 2\pi \displaystyle \int _0^{1-M/n}r^{2n+1} \frac{1}{\rho \left(1/(1-r)\right)}dr\\
&\geq & \frac{C(M)}{\rho (n/M)}.\\
\end{array}
\]
Let $p$ be an integer and let $(R_{n,j}(p))$ denotes the $p-$adic decomposition of $\DD$, that is
$$
R_{n,j}(p) = \Big\{z\in \DD \,;\ 1 -\frac{1}{p^{n}} \leq |z| < 1-\frac{1}{p^{n+1}}\ \text{and}\
\frac{2j \pi}{p^{n+1}} \leq \arg z < \frac{2(j+ 1) \pi}{p^{n+1}}\,\Big\},\ 0\leq j< p^{n+1}.
$$
Let $(R_n)_n$ be a lattice of $\cA ^2(\DD)$. It is clear that for $p$ big enough, then for all $n$ there exists $(k,j)$ such that $R_{k,j}(p) \subset R_n$. Note also that we have $A(R_{k,j}(p))\asymp A(R_n)$. Then we obtain 
$$
a_n(\mu) \lesssim a'_n(\mu), \quad n\geq 1,
$$
where $(a_n(\mu))_n$ (resp. $a'_n(\mu)$) is the decreasing rearrangement of $\left (\frac{\mu (R_n)}{A(R_n)}\right )_n$ (resp. $\left (\frac{\mu (R_{n,j}(p))}{A(R_{n,j}(p))}\right )_n$).
We have 
$$
\frac{   \mu (R_{   n,j   } (p) ) } { A( R_{n,j } (p)) }  \lesssim \frac{1}{\rho (p^n)}.
$$
Then we have 
$$
a_n(\mu) \lesssim a'_n(\mu) \lesssim\frac{ 1}{\rho (n/1+p)},\quad n\geq 1.
$$
Using (\ref {Rq}), we obtain
$$
\displaystyle \lim _{n\to \infty}\frac{\lambda (T_\mu)}{a_n(\mu)} =\infty.
$$
This proves our assertion.\\
\item  Now, we construct two positive Borel measures $\mu$ and $\nu$ on $\DD$ such that 
$$a_n(\mu) = a_n(\nu)\quad \mbox{and}\quad  \displaystyle \limsup _{n\to \infty}\lambda _n(T_\mu) /\lambda _n(T_\nu) = \infty.$$
To this end, let $\mu$ be the measure given in (\ref {CE}) and let $(R_n)_n$ be the dyadic ($p=2$) decomposition of $\DD$. Let $\nu$ be the measure given by
$$\nu = \displaystyle \sum _{n\geq 1} c_n\delta _{w_n},$$
where $(w_n)_n$  is an interpolating separated sequence of $\cA ^2= \cA ^2_0$  \cite {Sei}. The sequence $(w_n)_n$ satisfies 
$$
\left \| \displaystyle \sum c_n \frac{K_{w_n}}{\| K_{w_n}\|} \right \| ^2 \asymp \displaystyle \sum |c_n|^2.
$$
Let $R(w_n)$ be the unique disc $R_k$ such that $w_n \in R_k$ and put $c_n = a_n(\mu) A(R(w_n))$.  Let $\nu _n= \displaystyle \sum _{k\geq n} c_k\delta _{w_k}$, we have 
$$
\lambda _n (T_\nu) \leq \| T_{\nu _n} \| \lesssim  a_n(\mu). 
$$
This implies that $\lim \inf _{n\to \infty} \lambda _n(T_\nu)/\lambda _n(T_\mu) =  0$, while $a_n(\mu) = a_n(\nu )$.

\end{enumerate}
\section{The Berezin Transform}\label {Berezin}
\subsection{Preliminaries}
The Berezin transform of a bounded operator $T$ acting on $\cA^2_\omega$ is defined by 
$$
{\tilde T}(z)= \frac{\langle TK _z,K _z\rangle}{\| K _z\| ^2},\quad (z\in \Omega).
$$
If $T$ is  positive and compact then 
$$
\mbox{Tr} (T)= \displaystyle \int _{\Omega}{\tilde T}(z) \frac{dA(z)}{\tau^2(z)}.
$$
In particular $T\in \cS _1$ if and only if  ${\tilde T}\in L^1(\Omega , \frac{dA}{\tau^2})$.\\
The following general result is standard and is well known (at least for $h(t)=t^p$) (\cite {zhuOTFS}).
\begin{prop}\label {Upperberezin} Let $T$ be a positive compact operator on $\cA ^2_\omega$. Let $h$ be an increasing function such that $h(0) = 0$. We have 
\begin{enumerate}
 \item If $h$ is convex then 
$\displaystyle \sum _{n}h\left (\lambda _n(T) \right )  \geq \displaystyle \int _{\Omega }h\left ( {\tilde T}(z) \right )\frac{dA(z)}{\tau ^2(z)}.
$
  \item  If $h$ is concave then
$\displaystyle \sum _{n}h\left (\lambda _n(T) \right )  \leq \displaystyle \int _{\Omega }h\left ( {\tilde T}(z) \right )\frac{dA(z)}{\tau  ^2(z)}.
$
\end{enumerate}
\end{prop}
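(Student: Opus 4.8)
The plan is to reduce both inequalities to a single pointwise application of Jensen's inequality, using the spectral decomposition of $T$ together with the reproducing kernel identity. Since $T$ is positive and compact, I would first write $T = \sum_n \lambda_n(T)\langle \cdot, e_n\rangle e_n$, where $(e_n)_{n\geq 1}$ is an orthonormal basis of $\cA^2_\omega$ consisting of eigenvectors of $T$. Using the reproducing property $\langle f, K_z\rangle = f(z)$ (so that $\langle K_z, e_n\rangle = \overline{e_n(z)}$), one computes
$$
\langle T K_z, K_z\rangle = \sum_n \lambda_n(T)\,|\langle K_z, e_n\rangle|^2 = \sum_n \lambda_n(T)\,|e_n(z)|^2,
$$
while $\|K_z\|^2 = K(z,z) = \sum_n |e_n(z)|^2$. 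Consequently
$$
{\tilde T}(z) = \sum_n \lambda_n(T)\,w_n(z), \qquad w_n(z) := \frac{|e_n(z)|^2}{\|K_z\|^2},
$$
where the weights $w_n(z)$ are nonnegative and satisfy $\sum_n w_n(z) = 1$ for every $z\in\Omega$. Thus, pointwise in $z$, the Berezin transform ${\tilde T}(z)$ is a convex combination (a discrete average with probability weights $w_n(z)$) of the eigenvalues $(\lambda_n(T))_n$.

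Since $h(0)=0$ and $h$ is increasing, we have $h\geq 0$ on $[0,+\infty)$, so every quantity appearing below is nonnegative. Applying Jensen's inequality to the probability weights $(w_n(z))_n$ gives, in the convex case, $h({\tilde T}(z)) \leq \sum_n w_n(z)\,h(\lambda_n(T))$, and in the concave case the reverse inequality $h({\tilde T}(z)) \geq \sum_n w_n(z)\,h(\lambda_n(T))$.

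To conclude I would integrate these pointwise inequalities against $dA(z)/\tau^2(z)$. Recalling from the definition of $\tau$ that $dA(z)/\tau^2(z) = \omega^2(z)\|K_z\|^2\,dA(z)$, we have $w_n(z)\|K_z\|^2\omega^2(z) = |e_n(z)|^2\omega^2(z)$, so that by Tonelli's theorem (legitimate because all terms are nonnegative) the sum and integral may be exchanged. In the convex case this yields
$$
\int_\Omega h({\tilde T}(z))\,\frac{dA(z)}{\tau^2(z)} \leq \sum_n h(\lambda_n(T)) \int_\Omega |e_n(z)|^2\,\omega^2(z)\,dA(z) = \sum_n h(\lambda_n(T))\,\|e_n\|_\omega^2 = \sum_n h(\lambda_n(T)),
$$
using $\|e_n\|_\omega = 1$; the concave case follows by reversing every inequality. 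This is the claim. The only two points that require any care are the kernel identity $\sum_n |e_n(z)|^2 = \|K_z\|^2$ and the justification of the sum-integral interchange, and both are routine once nonnegativity is in hand, so I do not anticipate a genuine obstacle. As a consistency check, taking $h(t)=t$ (which is simultaneously convex and concave) recovers the stated trace formula $\mbox{Tr}(T) = \int_\Omega {\tilde T}(z)\,dA(z)/\tau^2(z)$ with equality.
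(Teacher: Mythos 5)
Your proof is correct and follows essentially the same route as the paper's: both expand $\tilde T(z)=\sum_n\lambda_n(T)\,|e_n(z)|^2/\|K_z\|^2$ using an orthonormal basis of eigenfunctions, observe that the weights sum to $1$ by the kernel identity $\sum_n|e_n(z)|^2=\|K_z\|^2$, apply Jensen's inequality pointwise, and integrate against $dA(z)/\tau^2(z)=\omega^2(z)\|K_z\|^2\,dA(z)$ using $\|e_n\|_\omega=1$. Your write-up is merely more explicit about the probability-weight interpretation and the Tonelli interchange.
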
 
 \begin{proof}
  Let $(f_n)_{n\geq1}$ be an orthonormal basis of $\cA ^2_\omega$ containing a maximal orthonormal system of  eigenfunctions of $T$. Set $\lambda _n=\lambda _n(T)$ and  write 
$$
\langle T K_z,K _z\rangle = \displaystyle \sum _n\lambda _n|\langle f_n,K _z\rangle|^2=   \displaystyle \sum _n\lambda _n|f_n(z)|^2.
$$ 
 If $ h $ is convex, then
 \[
 \begin{array}{lll}
 \displaystyle \int _{\Omega} h({\tilde T}(z)) \frac{dA(z)}{\tau _\omega ^2(z)} &= & \displaystyle \int _{\Omega} h\left (\displaystyle \sum _n\lambda _n \frac{ |f_n(z)|^2}{\| K_z\|^2}\right ) \frac{dA(z)}{\tau _\omega ^2(z)}\\
 &\leq & \displaystyle \int _{\Omega}   \displaystyle \sum _nh(\lambda _n) \frac{ |f_n(z)|^2}{\| K _z\|^2}\frac{dA(z)}{\tau _\omega ^2(z)}\\
  &= & \displaystyle \sum _nh(\lambda _n) \displaystyle \int _{\Omega}   |f_n(z)|^2\omega ^2(z)dA(z)\\
 & = & \displaystyle \sum _n  h\left (\lambda _n  \right ).
 \end{array}
 \]
The concave case is obtained in the same way.
\end{proof}
\subsection{Trace estimates and consequences}
Let $(R_n)\in \cL _\omega$. In the sequel $z_n$ will denote  the center of $R_n$. For the Toeplitz operator $T_\mu$ acting on $\cA^2_\omega$, the Berezin transform of $T_\mu$ will be  denoted by  $ {\tilde \mu}$.  In this section we use the following notation 
 $${\hat \mu}(z_n) = \mu (R_n) /A(R_n).$$
Our goal in this section is to estimate  the eigenvalues of $T_\mu$ in terms of ${\tilde \mu }(z_n)$. To this end, by Lemma  \ref{A1} and Lemma \ref{A2}, it suffices to estimate $\mbox{Tr}\  h(T_\mu)$ in terms of $(h({\tilde \mu}(z_n)))_{n\geq 1}$.
For the convex case we have the following result
\begin{thm}\label {Berezinconvex}
Let $\omega \in \cW$. Let $\mu$ be a positive Borel measure on $\Omega$ and $h$ be a convex increasing function such that $h(0)=0$. Then 
$$
\displaystyle \sum _{n\geq 1}h(\frac{1}{B}{\tilde \mu }(z_n)) \leq \mbox{Tr }(h(T_\mu)) \leq  \displaystyle \sum _{n\geq 1}h(B{\tilde \mu }(z_n)) .
$$
Where $B >0$ doesn't depend on either $\mu$ or $h$.
\end{thm}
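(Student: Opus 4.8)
The plan is to deduce everything from Theorem \ref{Traceconvex}. Summing over all $n$ is a rearrangement-invariant operation, so that theorem reads $\sum_n h(\tfrac1B\hat\mu(z_n))\le \mathrm{Tr}(h(T_\mu))\le \sum_n h(B\hat\mu(z_n))$, where $\hat\mu(z_n)=\mu(R_n)/A(R_n)$ and $\mathrm{Tr}(h(T_\mu))=\sum_n h(\lambda_n(T_\mu))$. Hence the whole content of the statement is to replace the local averages $\hat\mu(z_n)$ by the Berezin values $\tilde\mu(z_n)$. The two directions are genuinely asymmetric: the Berezin transform always dominates the local average, but no reverse pointwise bound holds.

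For the upper bound I would first record the pointwise inequality $\hat\mu(z_n)\lesssim \tilde\mu(z_n)$. Writing $\tilde\mu(z_n)=\|K_{z_n}\|^{-2}\int_\Omega |K(\zeta,z_n)|^2\omega^2(\zeta)\,d\mu(\zeta)$ and keeping only the integral over $R_n$, condition (\ref{C3}) gives $|K(\zeta,z_n)|\ge \tfrac1C\|K_\zeta\|\|K_{z_n}\|$ for $\zeta\in R_n$, so that $\tilde\mu(z_n)\gtrsim \int_{R_n}\tau^{-2}(\zeta)\,d\mu(\zeta)\asymp \mu(R_n)/A(R_n)=\hat\mu(z_n)$, using $\tau(\zeta)\asymp\tau(z_n)$ and $A(R_n)\asymp\tau^2(z_n)$ on the cell. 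Since $h$ is increasing, the upper estimate of Theorem \ref{Traceconvex} then yields $\mathrm{Tr}(h(T_\mu))\le \sum_n h(B_0\hat\mu(z_n))\le \sum_n h(B_0C\,\tilde\mu(z_n))$, which is the desired bound after renaming the constant.

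The lower bound is where the work lies, since $\hat\mu\lesssim\tilde\mu$ is useless here and a pointwise reverse estimate is false. The key observation is that the \emph{unnormalized} Berezin integral $\Phi(z):=\langle T_\mu K_z,K_z\rangle=\int_\Omega |K(z,\zeta)|^2\omega^2(\zeta)\,d\mu(\zeta)$ is subharmonic in $z$: for fixed $\zeta$ the map $z\mapsto K(z,\zeta)$ is holomorphic, so $|K(z,\zeta)|^2$ is subharmonic, and a positive integral of subharmonic functions is subharmonic (finiteness follows from boundedness of $T_\mu$). The sub-mean-value inequality over $\tfrac12 R_n$, combined with the standard cell-comparability $\|K_z\|\asymp\|K_{z_n}\|$, then gives $\tilde\mu(z_n)\lesssim \frac{1}{A(\frac12R_n)}\int_{\frac12R_n}\tilde\mu(z)\,dA(z)$; that is, the Berezin value is controlled by its average over a fixed fraction of the cell.

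From here I assemble the bound. Applying Jensen's inequality to the convex $h$ across this average, then using $A(\tfrac12R_n)\asymp\tau^2(z)$ on $\tfrac12R_n$ and the finite multiplicity $N$ of the lattice, I obtain
$$\sum_n h\Big(\tfrac1B\tilde\mu(z_n)\Big)\lesssim \sum_n\int_{\frac12R_n}h\Big(\tfrac{C_0}{B}\tilde\mu(z)\Big)\frac{dA(z)}{\tau^2(z)}\le N\int_\Omega h\Big(\tfrac{C_0}{B}\tilde\mu(z)\Big)\frac{dA(z)}{\tau^2(z)}.$$
Proposition \ref{Upperberezin}(1) bounds the last integral by $\mathrm{Tr}\big(h(\tfrac{C_0}{B}T_\mu)\big)$, and convexity with $h(0)=0$ gives $h(\lambda t)\le \lambda h(t)$ for $\lambda=C_0/B\le 1$; choosing $B=C_0C_1N$ absorbs every constant together with the multiplicity factor $N$, leaving $\sum_n h(\tfrac1B\tilde\mu(z_n))\le \mathrm{Tr}(h(T_\mu))$. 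The main obstacle is thus the lower bound, and precisely the realization that one must not compare reproducing kernels pointwise (which fails) but instead exploit the subharmonicity of $\langle T_\mu K_z,K_z\rangle$ in tandem with Proposition \ref{Upperberezin}; the remaining ingredients — cell-comparability of $\|K_z\|$ and $\tau$, the sub-mean-value property, and Jensen's inequality — are routine.
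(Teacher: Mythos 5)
Your upper bound is exactly the paper's: establish the pointwise estimate $\hat\mu(z_n)\lesssim\tilde\mu(z_n)$ from (\ref{C3}) and feed it into Theorem \ref{Traceconvex}. The overall shape of your lower bound is also the paper's (reduce to $\tilde\mu(z_n)\lesssim\int_{R_n}\tilde\mu(z)\,dA(z)/\tau^2(z)$, then apply Jensen, finite multiplicity, and Proposition \ref{Upperberezin}(1)), but the step where you derive that averaged inequality has a genuine gap. You pass from the sub-mean-value inequality for $\Phi(z)=\langle T_\mu K_z,K_z\rangle$ to one for $\tilde\mu=\Phi/\|K_\cdot\|^2$ by invoking ``the standard cell-comparability $\|K_z\|\asymp\|K_{z_n}\|$''. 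That comparability is not among the axioms of $\cW$ and fails for one of the paper's principal examples: for the standard Fock space $\|K_z\|^2=e^{\alpha|z|^2}$ and $\tau\asymp1$, so on a cell $D(z_n,\delta)$ the ratio $\|K_z\|^2/\|K_{z_n}\|^2=e^{\alpha(|z|^2-|z_n|^2)}$ can be of order $e^{c\delta|z_n|}$, unbounded as $|z_n|\to\infty$. The axioms only give $|K(z,z_n)|\asymp\|K_z\|\,\|K_{z_n}\|$ and $\tau(z)\asymp\tau(z_n)$ on cells; since $\|K_z\|^2=1/(\omega^2(z)\tau^2(z))$, norm comparability would additionally require $\omega(z)\asymp\omega(z_n)$, which is exactly what fails for Gaussian-type weights. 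So the inequality you need does not follow from subharmonicity of the unweighted $\Phi$.

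The repair is the paper's inequality (\ref{IM}): apply the sub-mean-value property not to $\Phi$ but to the quotient $z\mapsto |K(z,\zeta)/K(z,z_n)|^2$, which is $|g|^2$ for a holomorphic $g$ because $K(\cdot,z_n)$ is zero-free on the cell by (\ref{C3}); the denominator carries the weight along. This yields $|K(z_n,\zeta)|^2\omega^2(z_n)\lesssim A(R_n)^{-1}\int_{R_n}|K(z,\zeta)|^2\omega^2(z)\,dA(z)$. Rewriting $|K(w,\zeta)|^2\omega^2(w)$ as $\bigl(|K(w,\zeta)|^2/\|K_w\|^2\bigr)\tau^{-2}(w)$ and using only $\tau(z)\asymp\tau(z_n)$ and $A(R_n)\asymp\tau^2(z_n)$ gives $|K(z_n,\zeta)|^2/\|K_{z_n}\|^2\lesssim\int_{R_n}\bigl(|K(z,\zeta)|^2/\|K_z\|^2\bigr)\,dA(z)/\tau^2(z)$; integrating in $\zeta$ against $\omega^2\,d\mu$ produces the desired $\tilde\mu(z_n)\lesssim\int_{R_n}\tilde\mu(z)\,dA(z)/\tau^2(z)$, after which your Jensen--multiplicity--Proposition \ref{Upperberezin} assembly goes through verbatim.
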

\begin{proof}
By Theorem \ref {Traceconvex}, we have 
$$
\displaystyle \sum _{n\geq 1}h(\frac{1}{B_1}{\hat \mu }(z_n)) \leq \mbox{Tr }(h(T_\mu)) \leq  \displaystyle \sum _{n\geq 1}h(B_1{\hat \mu }(z_n)) .
$$
Since  $ {\hat \mu }(z_n) \lesssim {\tilde \mu }(z_n)$ ( \cite {EMMN}), we deduce that $\mbox{Tr }(h(T_\mu)) \leq  \displaystyle \sum _{n\geq 1}h(B{\tilde \mu }(z_n)) $.\\
On the other hand, by $(\ref{IM})$ we have 
$$
|K(z_n,\zeta)|^2\omega^2(z_n)\lesssim \frac{1}{A(R_n)} \displaystyle \int_{R_n}|K(z,\zeta)|^2\omega^2(z)dA(z), \quad \zeta \in \DD.
$$
It implies that
$$
\frac{|K(z_n,\zeta)|^2}{\| K_{z_n}\|^2} \lesssim  \displaystyle \int_{R_n}\frac{|K(z,\zeta)|^2}{\| K_z\|^2}\frac{dA(z)}{\tau _\omega ^2(z)}, \quad \zeta \in \DD,
$$
and
$$
{\tilde \mu }(z_n) \lesssim \displaystyle \int_{R_n}{\tilde \mu }(z)\frac{dA(z)}{\tau _\omega ^2 (z)}.
$$
Since $h$ is convex, by Theorem \ref {Upperberezin} we obtain, for some $c>0$,
$$
\displaystyle \sum _n h(c{\tilde \mu }(z_n)) \lesssim \displaystyle \int_\Omega h({\tilde \mu }(z)) \frac{dA(z)}{\tau ^2_\omega (z)}\leq   \mbox{Tr }(h(T_\mu)).
$$
The proof is complete.
\end{proof}
\begin{proof}[Proof of Theorem B]
By Theorem \ref {Berezinconvex} we have 
$$
\displaystyle \sum _{n\geq 1}h(\frac{1}{B}b_n(\mu)) \leq \displaystyle \sum _{n\geq 1} h(\lambda _n(T_\mu)) \leq  \displaystyle \sum _{n\geq 1}h(Bb_n(\mu)) .
$$
So, by Lemma \ref {A1} we have $b_n (\mu) \asymp 1/\rho (n)$ if and only if $\lambda _n(T_\mu) \asymp 1/\rho (n).$
\end{proof}
Now, we turn to the concave case.
Let $p>0$ and let $d\nu _n = dA_{|R_n}$. Recall that  
$$C_p(\cA^2_\omega, (R_n)) = \sup _{n\geq 1}\displaystyle \sum _{j\geq 1}{\tilde \nu} ^p _n(z_j) \in [0,+\infty).$$

For the concave case we have the following result
\begin{thm}\label {Berezinconcave}
Let $\omega \in \cW$, $(R_n)_n \in \cL _\omega$ and $p\in (0,1)$. The following assertions are equivalent 
\begin{enumerate}
\item $C_p(\cA^2_\omega, (R_n)_n) <\infty$.\\
\item  There exists $B >0$ such that for every Borel positive measure $\mu$ on $\Omega $ and every increasing concave function $h$ such that $h(t)/t^p$ is increasing, we have
$$
\frac{1}{B}\displaystyle \sum _{n\geq 1}h({\tilde \mu }(z_n)) \leq \mbox{Tr }(h(T_\mu)) \leq  B\displaystyle \sum _{n\geq 1}h({\tilde \mu }(z_n)) .
$$
\end{enumerate}
\end{thm}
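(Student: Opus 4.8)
The plan is to reduce the two-sided trace estimate in $(2)$ to the comparison, already available from Theorem \ref{TConcave}, between $\mbox{Tr}(h(T_\mu))$ and $\sum_n h({\hat\mu}(z_n))$, and to transfer everything to the normalized reproducing kernel along the lattice. Set
$$\kappa(n,j):=\frac{|K(z_n,z_j)|^2}{\|K_{z_n}\|^2\,\|K_{z_j}\|^2},$$
which is symmetric and satisfies $0<\kappa(n,j)\leq 1$, $\kappa(n,n)=1$. Using the $\cW$-regularity of $K$ (condition (\ref{C3})) and the subharmonicity estimate (\ref{IM}), I would first record two pointwise facts: ${\tilde\nu}_n(z_j)\asymp\kappa(n,j)$ and ${\tilde\mu}(z_n)\lesssim\sum_j{\hat\mu}(z_j)\kappa(n,j)$, the latter being the integral of (\ref{IM}) over each cell followed by summation (the finite multiplicity $N$ of $(R_n)$ controlling the overlaps). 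In particular $C_p(\cA^2_\omega,(R_n))\asymp\sup_n\sum_j\kappa(n,j)^p$, and by symmetry of $\kappa$ this also equals $\sup_j\sum_n\kappa(n,j)^p$. Throughout I shall use two elementary properties of an increasing concave $h$ with $h(0)=0$: subadditivity $h(\sum_k x_k)\leq\sum_k h(x_k)$ and $h(ct)\leq c\,h(t)$ for $c\geq 1$; together with the hypothesis that $h(t)/t^p$ is increasing, which gives $h(st)\leq s^p h(t)$ for $0<s\leq 1$.

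For $(1)\Rightarrow(2)$, the upper estimate holds with no use of $C_p$: Theorem \ref{TConcave} gives $\mbox{Tr}(h(T_\mu))\lesssim\sum_n h({\hat\mu}(z_n))$, and since ${\hat\mu}(z_n)\lesssim{\tilde\mu}(z_n)$ (see \cite{EMMN}) and $h(ct)\leq c\,h(t)$, we get $\sum_n h({\hat\mu}(z_n))\lesssim\sum_n h({\tilde\mu}(z_n))$. For the lower estimate I would start from the bound $\mbox{Tr}(h(T_\mu))\gtrsim\sum_j h({\hat\mu}(z_j))$ of Theorem \ref{TConcave} and show $\sum_n h({\tilde\mu}(z_n))\lesssim\sum_j h({\hat\mu}(z_j))$. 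Using ${\tilde\mu}(z_n)\lesssim\sum_j{\hat\mu}(z_j)\kappa(n,j)$, then subadditivity, then $\kappa(n,j)\leq 1$ with $h(st)\leq s^p h(t)$,
$$h({\tilde\mu}(z_n))\lesssim h\Big(\sum_j{\hat\mu}(z_j)\kappa(n,j)\Big)\leq\sum_j h\big({\hat\mu}(z_j)\kappa(n,j)\big)\leq\sum_j\kappa(n,j)^p\,h({\hat\mu}(z_j)).$$
Summing in $n$ and exchanging the order of summation, the inner sum $\sum_n\kappa(n,j)^p$ is dominated by $\sup_j\sum_n\kappa(n,j)^p\asymp C_p<\infty$, which yields $\sum_n h({\tilde\mu}(z_n))\lesssim C_p\sum_j h({\hat\mu}(z_j))$, as required.

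For $(2)\Rightarrow(1)$ I would test $(2)$ on the cell measures. Fix $m$ and apply the lower estimate of $(2)$ to $\mu=\nu_m=dA_{|R_m}$ and to $h(t)=t^p$, which is concave, increasing, with $h(t)/t^p\equiv 1$ non-decreasing, hence admissible. Since ${\tilde\mu}={\tilde\nu}_m$ and $\mbox{Tr}(h(T_{\nu_m}))=\|T_{\nu_m}\|_p^p$, this gives $\frac1B\sum_n{\tilde\nu}_m(z_n)^p\leq\|T_{\nu_m}\|_p^p$. By Lemma \ref{IPS}, $\|T_{\nu_m}\|_p^p$ is bounded by a constant independent of $m$, so taking the supremum over $m$ produces $C_p(\cA^2_\omega,(R_n))=\sup_m\sum_n{\tilde\nu}_m(z_n)^p<\infty$. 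Note that this direction does not even require the kernel comparisons, only the uniform Schatten bound for the cell Toeplitz operators.

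The main obstacle is the preliminary kernel step, namely the uniform comparisons ${\tilde\nu}_n(z_j)\asymp\kappa(n,j)$ and ${\tilde\mu}(z_n)\lesssim\sum_j{\hat\mu}(z_j)\kappa(n,j)$ with constants depending only on $\omega$ and $(R_n)$. The upper half of the second is exactly what (\ref{IM}) delivers cell by cell, but turning the raw averages into clean expressions in $\kappa$, and obtaining the lower bound ${\tilde\nu}_n(z_j)\gtrsim\kappa(n,j)$ needed to dominate $\sup_j\sum_n\kappa(n,j)^p$ by $C_p$, is where the regularity (\ref{C3}) of the kernel on $\cW$ is essential; the finite multiplicity of the lattice is used throughout to control the overlaps of the cells. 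Once these comparisons are in hand, the remainder is only the interplay between subadditivity of $h$, the homogeneity bound $h(st)\leq s^p h(t)$, and the summability encoded by $C_p$.
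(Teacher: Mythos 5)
Your overall architecture coincides with the paper's: the upper estimate in $(1)\Rightarrow(2)$ via Theorem \ref{TConcave} together with ${\hat\mu}(z_n)\lesssim{\tilde\mu}(z_n)$ and $h(ct)\leq ch(t)$, and the direction $(2)\Rightarrow(1)$ by testing on $\mu=\nu_m$ with $h(t)=t^p$ and invoking Lemma \ref{IPS}, are both correct and are exactly what the paper does. The gap is in the lower estimate of $(1)\Rightarrow(2)$, where you route everything through the claimed two-sided comparison ${\tilde\nu}_n(z_j)\asymp\kappa(n,j)$ with $\kappa(n,j)=|K(z_n,z_j)|^2/\left(\|K_{z_n}\|^2\|K_{z_j}\|^2\right)$. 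Only one of the two inequalities is available: the subharmonicity estimate (\ref{IM}) applied to $f=K_{z_j}$ gives $|K(z_j,z_n)|^2\omega^2(z_n)\lesssim A(R_n)^{-1}\int_{bR_n}|K(z_j,\zeta)|^2\omega^2(\zeta)\,dA(\zeta)$, i.e.\ $\kappa(n,j)\lesssim{\tilde\nu}_{bR_n}(z_j)$. The reverse inequality ${\tilde\nu}_n(z_j)\lesssim\kappa(n,j)$ is a \emph{reverse} mean-value inequality: it asks that the value of $|K(z_j,\cdot)|^2\omega^2$ at the centre $z_n$ dominate its average over $R_n$, and nothing in the axioms of $\cW$ provides this when $z_j$ is far from $z_n$ --- condition (\ref{C3}) controls $|K(\zeta,z)|$ only for $|z-\zeta|\leq a\tau(z)$, and $K(z_j,\cdot)$ may be small, or vanish, at $z_n$ while being large elsewhere on $R_n$; off-diagonal control of the kernel is precisely the \emph{additional} hypothesis (\ref{offdiag}) defining the smaller class $\cW_M$. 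Both your symmetrization $\sup_n\sum_j\kappa(n,j)^p=\sup_j\sum_n\kappa(n,j)^p\asymp C_p(\cA^2_\omega,(R_n)_n)$ and the final domination of $\sum_n\kappa(n,j)^p$ lean on the missing direction, so as written the lower estimate is not established.

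The repair is to dispense with $\kappa$ altogether, which is what the paper does: (\ref{IM}) gives directly ${\tilde\mu}(z)\lesssim\sum_j{\hat\mu}(z_j){\tilde\nu}_j(z)$; then subadditivity of $h$ and $h(st)\leq s^ph(t)$ (using the uniform bound ${\tilde\nu}_j(z)\lesssim 1$, which follows from Cauchy--Schwarz) yield $h({\tilde\mu}(z_n))\lesssim\sum_j h({\hat\mu}(z_j))\,{\tilde\nu}_j(z_n)^p$. Summing over $n$ and exchanging the order of summation, the inner sum $\sum_n{\tilde\nu}_j(z_n)^p$ is bounded by $C_p(\cA^2_\omega,(R_n)_n)$ \emph{directly from its definition} (with $j$ as the cell index and the $z_n$ as evaluation points), so neither the symmetry of the matrix $({\tilde\nu}_j(z_n))$ nor any lower bound on ${\tilde\nu}_j(z_n)$ is ever needed. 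With that substitution your argument becomes the paper's proof.
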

\begin{proof}
The same argument as before proves that we have  $\mbox{Tr }(h(T_\mu )) \leq  C_2\displaystyle \sum _{n\geq 1}h({\tilde \mu }(z_n))$.\\
By Lemma \ref {IP}, there exists $B>0$ such that  $ \mbox{Tr }T^p_{\nu _j} \leq B/p$. So, it is obvious that the condition $C_1\displaystyle \sum _{n\geq 1}h({\tilde \mu }(z_n)) \leq \mbox{Tr }(h(T_\mu))$, applied with $\mu = \nu _n$ and $h(t) =t^p$, gives that $C_p(\cA^2_\omega, (R_n)_n) <\infty$. \\

Conversely, suppose that $C_p(\cA^2_\omega, (R_n)_n)  <\infty$. A standard computation gives
\[
\begin{array}{lll}
{\tilde \mu} (z) & \lesssim & \displaystyle \sum _{j\geq 1}{\hat \mu }(z_j)\left ( \displaystyle \int _{R_j}\frac{|K(z,\zeta)|^2}{\|K_z\|^2}\omega ^2(\zeta)dA(\zeta)\right ).\\
& \lesssim &  \displaystyle \sum _{j\geq 1}{\hat \mu }(z_j){\tilde \nu}_j(z).\\
\end{array}
\]
Since $h$ is concave and $h(t)/t^p$ is increasing, we have
$$
h({\tilde \mu} (z)) \lesssim \displaystyle \sum _{j\geq 1}h({\hat \mu }(z_j)){\tilde \nu}_j(z)^p.
$$
Consequently
\[
\begin{array}{lll}
\displaystyle \sum _{n\geq 1} h({\tilde \mu} (z_n))&  \lesssim  & \displaystyle \sum _{j\geq 1}h({\hat \mu }(z_j))\displaystyle \sum _{n\geq 1} {\tilde \nu}_j(z_n)^p.\\
& \lesssim &  C_p(\cA^2_\omega, (R_n)_n)  \displaystyle \sum _{j\geq 1}h({\hat \mu }(z_j))\\
& \lesssim &C_p(\cA^2_\omega, (R_n)_n)  \mbox{Tr}(h(T_\mu)).\\
\end{array}
\]
The proof is complete.
\end{proof}
Theorem C is a direct consequence of the following result
\begin{thm}\label {MBconcave}
Let $\omega \in \cW$ and $(R_n)_n$. Let $\mu$ be a positive Borel measure on $\Omega$ such that $T_\mu$ is compact. Let $p\in (0,1)$ such that $C_p(\cA^2_\omega, (R_n)_n))<\infty$. Let $\rho : [1,+\infty) \to (0,+\infty[$ be an increasing positive function. Suppose  that there exist $\beta \in (0,1/p) $ and $\gamma >1$ such that $\rho (t)/t^\gamma$ is increasing and $\rho (t)/t^\beta$ is decreasing. Then
$$
\lambda _n(T_\mu) \asymp 1/\rho (n)\quad \iff \quad b_n (\mu)  \asymp 1/\rho (n).
$$
\end{thm}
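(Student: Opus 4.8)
The plan is to prove both implications by one and the same two-step mechanism: first I would transfer an $\asymp 1/\rho$ estimate on one of the two sequences into a two-sided comparison of the \emph{traces} $\mathrm{Tr}(h(T_\mu))=\sum_n h(\lambda_n(T_\mu))$ against $\sum_n h(1/\rho(n))$, valid uniformly over a whole family of test functions $h$; then I would invert this family of scalar inequalities by means of Lemma \ref{A2} to recover the pointwise $\asymp 1/\rho$ estimate on the other sequence. The bridge between $(\lambda_n(T_\mu))$ and $(b_n(\mu))$ is Theorem \ref{Berezinconcave}, which applies precisely because $C_p(\cA^2_\omega,(R_n)_n)<\infty$: for every increasing concave $h$ with $h(0)=0$ and $h(t)/t^p$ increasing,
$$\frac{1}{B}\sum_{n\geq 1}h(b_n(\mu)) \leq \sum_{n\geq 1}h(\lambda_n(T_\mu)) \leq B\sum_{n\geq 1}h(b_n(\mu)),$$
where I have used that $\sum_n h(\tilde\mu(z_n))=\sum_n h(b_n(\mu))$, since the terms are nonnegative and $(b_n(\mu))$ is merely the decreasing rearrangement of $(\tilde\mu(z_n))$.

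The elementary fact I would record first is that for any positive decreasing sequence $(c_n)$ with $c_n\asymp 1/\rho(n)$, say $\frac{1}{C\rho(n)}\le c_n\le \frac{C}{\rho(n)}$ for some $C\ge 1$, and any increasing concave $h$ with $h(0)=0$, one has $\sum_n h(c_n)\asymp \sum_n h(1/\rho(n))$ with constants depending only on $C$, hence \emph{uniform in $h$}. This is immediate from the concavity relations $h(Ct)\le Ch(t)$ and $h(t/C)\ge h(t)/C$ for $C\ge 1$, both of which follow from the monotonicity of $t\mapsto h(t)/t$.

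For the forward implication, assume $\lambda_n(T_\mu)\asymp 1/\rho(n)$. Applying the observation above with $c_n=\lambda_n(T_\mu)$ gives $\sum_n h(\lambda_n(T_\mu))\asymp \sum_n h(1/\rho(n))$, and combining this with Theorem \ref{Berezinconcave} yields $\sum_n h(b_n(\mu))\asymp \sum_n h(1/\rho(n))$, uniformly over all increasing concave $h$ with $h(t)/t^p$ increasing. I would then invoke Lemma \ref{A2} on the decreasing sequence $a_n=b_n(\mu)$ to conclude $b_n(\mu)\asymp 1/\rho(n)$. The converse is entirely symmetric: from $b_n(\mu)\asymp 1/\rho(n)$ the same two facts give $\sum_n h(\lambda_n(T_\mu))\asymp \sum_n h(1/\rho(n))$ uniformly, and Lemma \ref{A2} applied to $a_n=\lambda_n(T_\mu)$ delivers $\lambda_n(T_\mu)\asymp 1/\rho(n)$.

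The only point requiring care, and the main (mild) obstacle, is matching the exponent hypotheses of Lemma \ref{A2}, which demand an exponent $\hat\beta>1$ with $\rho(t)/t^{\hat\beta}$ decreasing and $p\in(0,1/\hat\beta)$, whereas the theorem only supplies $\beta\in(0,1/p)$ with $\rho(t)/t^{\beta}$ decreasing. Since $p<1$ forces $1/p>1$, and $\rho(t)/t^{\hat\beta}$ stays decreasing for every $\hat\beta\ge\beta$, I would simply pick $\hat\beta\in(\max(1,\beta),\,1/p)$, a nonempty interval: this keeps $\rho(t)/t^{\hat\beta}$ decreasing and ensures $p<1/\hat\beta$, so that Lemma \ref{A2} applies with exactly the exponent $p$ for which $C_p<\infty$ and Theorem \ref{Berezinconcave} holds. (Note also that $\rho(t)/t^{\gamma}$ increasing with $\gamma>1$ forces $\rho(t)\gtrsim t^{\gamma}$, so the sums built from the truncated test functions used in Lemma \ref{A2} converge, and all comparisons above are between finite quantities.) With this adjustment both applications of Lemma \ref{A2} are legitimate and the stated equivalence follows.
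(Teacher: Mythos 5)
Your proof is correct and follows essentially the same route as the paper: combine Theorem \ref{Berezinconcave} (valid because $C_p(\cA^2_\omega,(R_n)_n)<\infty$) to compare $\sum_n h(\lambda_n(T_\mu))$ with $\sum_n h(b_n(\mu))$ uniformly over the admissible test functions $h$, then invert via Lemma \ref{A2}. The two details you supply beyond the paper's two-line argument — the uniform-in-$h$ comparison $\sum_n h(c_n)\asymp\sum_n h(1/\rho(n))$ for $c_n\asymp 1/\rho(n)$, and the adjustment of $\beta$ to exceed $1$ (which is in fact automatic, since $\rho(t)/t^\gamma$ increasing and $\rho(t)/t^\beta$ decreasing force $\beta\geq\gamma>1$) — are exactly the points the paper leaves implicit, and both are handled correctly.
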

\begin{proof}
By Theorem \ref {Berezinconcave} we have 
$$
\frac{1}{B} \displaystyle \sum _{n\geq 1}h(b_n(\mu)) \leq \displaystyle \sum _{n\geq 1} h(\lambda _n(T_\mu) \leq  B\displaystyle \sum _{n\geq 1}h(b_n(\mu)) .
$$
So, by lemma \ref {A2} we have $b_n (\mu) \asymp 1/\rho (n)$ if and only if $\lambda _n(T_\mu) \asymp 1/\rho (n).$
\end{proof}
\subsection{Examples}
Now, we give some examples. 
\begin{itemize}
\item {\bf Standard Fock spaces.}  Let $\alpha >0$. Let $\cF ^2_\alpha$ be the standard Fock space given by (\ref {F}). First, recall that the Berezin transform of $T_\mu$ is given by
$$
{\tilde \mu }(z) = \displaystyle \int _{\CC } e^{-\alpha |z-\zeta | ^2}d\mu (\zeta) \quad (z\in \CC).
$$
For more informations on Fock spaces see \cite {ZhuFock}.\\

We have $
C_p(\cF^2_\alpha, (R_n)_n) <\infty.
$ for all $p \in (0,1)$. Indeed,
\[
\begin{array}{lll}
\displaystyle \sum _{j\geq 1}{\tilde \nu} ^p _n(z_j) & \asymp & \displaystyle \sum _{n}   \left (\displaystyle \int _{R_n} e^{-\alpha |z_j-\zeta|^2}dA (\zeta) \right )^p \\
& \asymp & \displaystyle \sum _{n}   \left (\displaystyle \int _{R_n} e^{-\alpha |\zeta|^2}dA (\zeta) \right )^p \\
& =&  O(\frac{1}{p}).\\
\end{array}
\]
\item { \bf Weighted analytic spaces.}
Let $\Omega$ be a subdomain of  $\CC$ and let $\omega \in \cW$. Let $M>0$. We say that $\omega \in  \cal W _M$   if the reproducing kernel of $\cA^2_\omega$ satisfies
\begin{align}\label {offdiag}
|K(z,\zeta) |\leq C(M)\| K_z\| \| K_\zeta\| \left ( \frac{\min (\tau_\omega (z), \tau _\omega (\zeta))}{|z-\zeta|} \right )^M .
\end{align}
We will denote $\cW _\infty = \cap _{M>0}\cW _M$. Examples of such weights can be found in \cite { AP, SY, HLS}.\\

\begin{prop}
Let $M>1$ and let $\omega \in \cW _M$, For every $(R_n) \in \cL _\omega$, we have 
$$C_p(\cA^2_\omega , (R_n)_n)) <\infty,\quad  (\ \mbox{for all}\ p >1/M).$$
In particular, if $\omega \in \cW _\infty $ then $C_p(\cA^2_\omega , (R_n)_n)) <\infty$, for all $p>0$.
\end{prop}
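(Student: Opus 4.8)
The plan is to estimate the Berezin transform $\tilde\nu_n$ pointwise through the off--diagonal kernel bound (\ref{offdiag}) and then to reduce the supremum $\sup_n\sum_{j}\tilde\nu_n^p(z_j)$ to a convergent integral whose bound is uniform in $n$. First I would record that, since $\langle T_{\nu_n}f,f\rangle=\int_{R_n}|f|^2\omega^2\,dA$ (as in the proof of Lemma \ref{restriction}), the Berezin transform of $T_{\nu_n}$ is
$$\tilde\nu_n(z)=\frac{1}{\|K_z\|^2}\int_{R_n}|K(z,\zeta)|^2\omega^2(\zeta)\,dA(\zeta).$$
Inserting (\ref{offdiag}) and using the identity $\|K_\zeta\|^2\omega^2(\zeta)=1/\tau_\omega^2(\zeta)$ yields
$$\tilde\nu_n(z)\leq C(M)^2\int_{R_n}\frac{1}{\tau_\omega^2(\zeta)}\Big(\frac{\min(\tau_\omega(z),\tau_\omega(\zeta))}{|z-\zeta|}\Big)^{2M}dA(\zeta).$$

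Next I would exploit the lattice structure of \S\ref{W}: on $R_n=D(z_n,\delta\tau_\omega(z_n))$ one has $\tau_\omega(\zeta)\asymp\tau_\omega(z_n)$ and $A(R_n)\asymp\tau_\omega(z_n)^2$ by (\ref{C3}). Fix $z_j$. If $|z_j-z_n|\lesssim\tau_\omega(z_n)$ — which, by the disjointness of the discs $D(z_m,\tfrac{\delta}{B_\omega}\tau_\omega(z_m))$ and a volume count, happens for at most a uniformly bounded number of indices $j$ — I simply use $\tilde\nu_n(z_j)\leq\|T_{\nu_n}\|\lesssim 1$, the last bound coming from Lemma \ref{restriction} applied to $\nu_n$ (as $\nu_n(R_k)/A(R_k)=A(R_n\cap R_k)/A(R_k)\leq 1$). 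For the far points, i.e. $|z_j-z_n|\gtrsim\tau_\omega(z_n)$, one has $|z_j-\zeta|\asymp|z_j-z_n|$ and $\min(\tau_\omega(z_j),\tau_\omega(\zeta))\asymp\min(\tau_\omega(z_j),\tau_\omega(z_n))$ for $\zeta\in R_n$, so the displayed integral gives
$$\tilde\nu_n(z_j)\lesssim\Big(\frac{\min(\tau_\omega(z_j),\tau_\omega(z_n))}{|z_j-z_n|}\Big)^{2M}.$$

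It then remains to sum the $p$-th powers. Writing $s:=2Mp$, the nearby terms contribute $O(1)$, while the far terms give $\sum_{|z_j-z_n|\gtrsim\tau_\omega(z_n)}(\min(\tau_\omega(z_j),\tau_\omega(z_n))/|z_j-z_n|)^{s}$. I would turn this lattice sum into an integral using the disjoint discs $D(z_j,\tfrac{\delta}{B_\omega}\tau_\omega(z_j))$, each of area $\asymp\tau_\omega(z_j)^2$ and on which the summand is comparable to its value at $z_j$; this dominates the sum by $\int_{|z-z_n|\gtrsim\tau_\omega(z_n)}\tau_\omega(z)^{-2}\big(\min(\tau_\omega(z),\tau_\omega(z_n))/|z-z_n|\big)^s\,dA(z)$. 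Splitting according to whether $\tau_\omega(z)\geq\tau_\omega(z_n)$ or not, and using $\int_{|w|>R}|w|^{-s}\,dA(w)\asymp R^{2-s}/(s-2)$, each piece equals $O(1)$ uniformly in $n$ precisely when $s>2$, that is $2Mp>2$. This is exactly the hypothesis $p>1/M$, and it is the crux of the argument: the estimate degenerates as $s\downarrow 2$, which is the main obstacle and the source of the threshold. The final clause is immediate, since $\omega\in\cW_\infty$ means $\omega\in\cW_M$ for all $M$, so any $p>0$ satisfies $p>1/M$ upon choosing $M$ large.
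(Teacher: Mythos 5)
Your argument is correct and follows essentially the same route as the paper: bound $\tilde\nu_n(z_j)$ via the off-diagonal estimate \eqref{offdiag}, treat the finitely many nearby indices trivially, convert the far lattice sum into an integral over disjoint discs of area $\asymp\tau_\omega^2$, and observe that convergence holds exactly when $2Mp>2$. The only (cosmetic) difference is that the paper distributes the factor $\min(\tau_\omega(z_j),\tau_\omega(z_n))^{2M}\leq \tau_\omega(z_j)^{2M-2a}\tau_\omega(z_n)^{2a}$ with $(M-a)p=1$ instead of splitting the final integral according to whether $\tau_\omega(z)\geq\tau_\omega(z_n)$; both lead to the same $O(1)$ bound.
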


\begin{proof}

Let $p >1/M$ and let $z_n$ be the center of $R_n$. We have 
$$
\displaystyle \sum _{j\geq 1}{\tilde \nu} ^p _n(z_j)   =   \displaystyle \sum _{j\geq 1} \left ( \displaystyle \int _{R_n}\frac{|K_{z_j}(\zeta)|^2 }{\|K _{z_j}\|^2}dA_\omega (\zeta)\right )^p.\\\
$$
Since $(B_\omega R_j)$ is of finite multiplicity,  $\Lambda _n :=\{ j:\  B_\omega R_j \cap  B_\omega R_n \neq \emptyset \}$ is finite. Then 
\[
\begin{array}{lll}
\displaystyle \sum _{j\in \Lambda _n} \left ( \displaystyle \int _{R_n}\frac{|K_{z_j}(\zeta)|^2 }{\|K _{z_j}\|^2}dA_\omega  (\zeta)\right )^p  &  \lesssim &  \displaystyle \sum _{j\in \Lambda _n}  \left ( \displaystyle \int _{R_n}\|K_\zeta\|^2 dA _\omega(\zeta)\right )^p\\
& \lesssim &  \displaystyle \sum _{j\in \Lambda _n}  \left ( \displaystyle \int _{R_n}\frac{1}{\tau _\omega ^{2}(\zeta)}dA (\zeta)\right )^p\\
& = & O(1).
\end{array}
\]
On the other hand, since $Mp >1$, let $a>0$ such that $(M-a)p=1$. We have

\[
\begin{array}{lll}
 \displaystyle \sum _{j\notin \Lambda _n}  \left ( \displaystyle \int _{R_n}\frac{|K_{z_j}(\zeta)|^2 }{\|K _{z_j}\|^2 } dA_\omega  (\zeta)\right )^p& \lesssim &  \displaystyle \sum 
_{j\notin \Lambda _n}  \left ( \displaystyle \int _{R_n}\| K _\zeta\|^2 \left ( \frac{\min (\tau_\omega (z_j), \tau _\omega (\zeta))}{|z_j-\zeta|} \right )^{2M}dA_\omega (\zeta)\right )^p\\
& \asymp & \displaystyle \sum _{j\notin \Lambda _n}  \frac{\tau_\omega (z_j)^{(2M-2a)p} \tau ^{2ap}_\omega (z_n)}{|z_j-z_n|^{2Mp}}\\
&\lesssim &   \displaystyle \int _{\Omega \setminus R_n}  \frac{\tau_\omega (\zeta)^{(2M-2a)p-2} \tau ^{2ap}_\omega (z_n)}{|\zeta-z_n|^{2Mp}}dA(\zeta)\\
&\lesssim &   \displaystyle \int _{\Omega \setminus R_n}  \frac{\tau ^{2ap}_\omega (z_n)}{|\zeta-z_n|^{2ap+2}}dA(\zeta)\\
&= & O(1).
\end{array}
\]
Which implies that $C_p(\cA^2_\omega , (R_n)_n)<\infty $, whenever $p>1/M$.
\end{proof}
\item {\bf Standard Bergman spaces on $\DD $.}
Let  $\alpha >-1$, let $\omega^{2} _\alpha (z) = (1+\alpha) (1-|z|^2)^\alpha$ and let $\cA^2 _\alpha$  be the associated standard Bergman spaces.
Recall that the kernel of $\cA^2_\alpha$ is given by 
$$
K^\alpha _z(w)=\displaystyle \frac{1}{ (1-z\overline {w})^{2+\alpha }    }.$$
We have the following proposition
\begin{prop}\label {SBB}
Let $(R_n)_n \in \cL _{\omega _\alpha}$and let $p\in (0,1)$. We have
$C_p(\cA^2_\alpha, (R_n)_n)) <\infty $ if and only if $p>\frac{1}{2+\alpha }$.
\end{prop}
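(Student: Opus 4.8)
The plan is to reduce the whole statement to one explicit two–sided estimate of $\tilde\nu_n(z_j)$ together with the classical estimate of Bergman-type integrals over $\DD$. Write $s:=2+\alpha$ and $t_k:=1-|z_k|^2$, where $z_k$ is the centre of $R_k$. Since $K^\alpha_z(w)=(1-z\bar w)^{-s}$, $\|K^\alpha_z\|^2=t_z^{-s}$ and $\omega_\alpha^2(\zeta)\asymp(1-|\zeta|^2)^\alpha$, I would first record a pointwise estimate on $R_n$, where $1-|\zeta|^2\asymp t_n$, $A(R_n)\asymp t_n^2$, and $|1-z_j\bar\zeta|\asymp|1-z_j\bar z_n|$ for every $j$ (the last comparison because $|1-z_j\bar z_n|\ge 1-|z_j||z_n|\ge t_n/2$ while $|1-z_j\bar\zeta|$ varies by at most $\mathrm{diam}(R_n)\asymp\delta t_n$ over $\zeta\in R_n$). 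This gives the symmetric estimate
\[
\tilde\nu_n(z_j)=\int_{R_n}\frac{|K^\alpha_{z_j}(\zeta)|^2}{\|K^\alpha_{z_j}\|^2}\,\omega_\alpha^2(\zeta)\,dA(\zeta)\asymp\frac{t_j^{\,s}\,t_n^{\,s}}{|1-z_j\bar z_n|^{2s}},
\]
and I would also note the crude bound $\tilde\nu_n(z_j)\le 1$, which follows at once from $\nu_n\le dA$ and the reproducing property $\int_\DD|K^\alpha_{z_j}(\zeta)|^2\omega_\alpha^2(\zeta)\,dA(\zeta)=\|K^\alpha_{z_j}\|^2$.

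For sufficiency ($p>1/s$) I would raise the estimate to the power $p$, factor out $t_n^{sp}$, and compare the lattice sum with a Bergman integral. Separating the finitely many indices $j$ with $B_\omega R_j\cap B_\omega R_n\neq\emptyset$ (each contributing $O(1)$ since $\tilde\nu_n^p(z_j)\le 1$) from the rest, and using that $(R_j)$ has bounded overlap with $A(R_j)\asymp t_j^2$, one obtains
\[
\sum_{j\ge1}\tilde\nu_n^{\,p}(z_j)\;\asymp\;t_n^{sp}\int_{\DD}\frac{(1-|w|^2)^{sp-2}}{|1-\bar w z_n|^{2sp}}\,dA(w)+O(1).
\]
Here $p>1/s$ means $sp-2>-1$, and the exponent satisfies $2sp=2+(sp-2)+sp$ with $sp>0$, so the standard estimate of such integrals yields $\int_\DD\asymp t_n^{-sp}$. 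Hence $\sum_j\tilde\nu_n^{\,p}(z_j)\asymp 1$ uniformly in $n$, i.e. $C_p(\cA^2_\alpha,(R_n)_n)<\infty$.

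For necessity ($p\le 1/s$) I would show the sum already diverges for a single fixed $n$. Using $|1-z_j\bar z_n|\le 2$ in the estimate for $\tilde\nu_n(z_j)$,
\[
\sum_{j\ge1}\tilde\nu_n^{\,p}(z_j)\;\gtrsim\;t_n^{sp}\sum_{j\ge1}t_j^{\,sp}\;\asymp\;t_n^{sp}\int_\DD(1-|w|^2)^{sp-2}\,dA(w),
\]
and this integral diverges precisely when $sp-2\le-1$, that is $p\le 1/s$. Consequently $C_p(\cA^2_\alpha,(R_n)_n)=\sup_n\sum_j\tilde\nu_n^{\,p}(z_j)=\infty$.

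The computation has no deep obstacle; the only point requiring care is the sufficiency direction, where the passage from the discrete lattice sum to the Bergman integral must be carried out uniformly in $n$. This is exactly why I would isolate the near-diagonal block $\{j:B_\omega R_j\cap B_\omega R_n\neq\emptyset\}$ (controlled by the finite multiplicity of the lattice and the bound $\tilde\nu_n\le 1$) and estimate the remaining, genuinely off-diagonal, terms by the integral, whose value $\asymp t_n^{-sp}$ cancels the prefactor $t_n^{sp}$ and produces a bound independent of $n$.
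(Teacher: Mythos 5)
Your proof is correct and follows essentially the same route as the paper: the same two-sided estimate $\tilde\nu_n(z_j)\asymp t_j^{2+\alpha}t_n^{2+\alpha}|1-\bar z_jz_n|^{-2(2+\alpha)}$, the same comparison of the lattice sum with the Bergman-type integral $\int_\DD(1-|w|^2)^{(2+\alpha)p-2}|1-\bar wz_n|^{-2(2+\alpha)p}\,dA(w)$, and the same classical integral lemma to decide uniform finiteness exactly when $p>1/(2+\alpha)$. The only difference is presentational: you separate the near-diagonal block and argue the two implications individually, whereas the paper runs a single chain of equivalences.
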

\begin{proof}
Let $(R_n) \in \cL_\omega$. We have 
$$
{\tilde \nu_{n} }(z) = \displaystyle \int _{R_n} \frac{(1-|z|^2)^{2+\alpha}}{|1-{\bar z}\zeta|^{4+2\alpha}} dA_\alpha(\zeta).
$$
Then 
\[
\begin{array}{lll}
\displaystyle \sum _{j\geq 1} {\tilde \nu }^p_n(z_j) & = & \displaystyle \sum _{j\geq 1} \left (\displaystyle \int _{R_n} \frac{(1-|z_j|^2)^{2+\alpha}}{|1-{\bar z_j}\zeta|^{4+2\alpha}} dA_\alpha(\zeta)\right )^p\\
& \asymp & \displaystyle \sum _{j\geq 1} \left ( \frac{(1-|z_j|^2)^{2+\alpha}(1-|z_n|^2)^{2+\alpha}}{|1-{\bar z_j}z_n|^{4+2\alpha}} \right )^p\\
& \asymp & \displaystyle \int _{\DD} \left ( \frac{(1-|w|^2)^{2+\alpha}(1-|z_n|^2)^{2+\alpha}}{|1-{\bar w}z_n|^{4+2\alpha}} \right )^p\frac{dA(w)}{(1-|w|^2)^2}\\
& \asymp & \displaystyle \int _{\DD} \frac{(1-|z_n|^2)^{(2+\alpha)p}} {|1-{\bar w}z_n|^{(4+2\alpha)p}}\frac{dA(w)} {(1-|w|^2)^{2-(2+\alpha)p}}.
\end{array}
\]<<
Then, the last integral is uniformly finite if and only if $p>\frac{1}{2+\alpha}$ ( [\cite{DS} Lemma 2 page 32].
\end{proof}
Proposition \ref {SBB} implies that the Berezin transform is not sufficient to describe the behavior of the eigenvalues of Toeplitz operators. In what follows, we consider a modified Berezin transform  which is more appropriate to our problem in this case (see for instance \cite {WX} and \cite {Pau}).\\

Let $T$ be a bounded operator on $\cA ^2_\alpha$ and let  $s >-1$. The modified Berezin transform, $B_{\alpha, s}(T)$, of $T$ is given by 
$$
B_{\alpha, s}(T)(z)= \frac{\langle TK^s_z, K^s _z \rangle}{\| K^s _z\| ^2_{\alpha}}
$$
Let $\tau (z) = (1-|z|^2)$. We have the following general result
\begin{prop} \label {modifyberezin}
Let $\alpha$ and let $s$ such that $s  > \frac{\alpha  -1}{2}$ . Let $T$ be a positive compact operator on $\cA^2_\alpha$. We have
 \begin{enumerate}
 \item
 $$
 \mbox{Tr} (T) \asymp  \displaystyle \int _{\DD} B_{\alpha, s} (T)(z)
 \frac{dA(z)}{\tau ^2(z)}.
 $$
 \item  Let  $h$ be a concave function such that $h(0)=0$. Then 
 $$
 \mbox{Tr}(h(T)) \lesssim  \displaystyle \int _{\DD} h \left ( B_{\alpha, s} (T)(z)\right ) \frac{dA(z)}{\tau ^2(z)} .
$$
\item  Let  $h$ be a convex function such that $h(0)=0$. Then 
 $$
  \displaystyle \int _{\DD} h \left ( B_{\alpha, s} (T)(z)\right ) \frac{dA(z)}{\tau ^2(z)}  \lesssim \mbox{Tr} (h(T)).
$$
\end{enumerate}
All the implied constants depend on $\alpha$ and $s$.
\end{prop}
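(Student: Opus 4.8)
The plan is to reduce all three assertions to a single uniform two-sided estimate for the normalized modified kernels, and then to combine it with Jensen's inequality. I would begin with the spectral decomposition $T=\sum_n\lambda_n\langle\,\cdot\,,f_n\rangle_\alpha f_n$, where $(f_n)_n$ is an orthonormal basis of $\cA^2_\alpha$ made of eigenfunctions of the positive compact operator $T$ and $\lambda_n=\lambda_n(T)\geq 0$. Writing $F_n(z)=\langle f_n,K^s_z\rangle_\alpha$ and
$$
p_n(z)=\frac{|F_n(z)|^2}{\|K^s_z\|^2_\alpha},
$$
one gets $B_{\alpha,s}(T)(z)=\sum_n\lambda_n p_n(z)$. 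The crucial observation is that, since $K^s_z\in\cA^2_\alpha$ for every $z\in\DD$, Parseval's identity yields $\sum_n|F_n(z)|^2=\|K^s_z\|^2_\alpha$, hence $\sum_n p_n(z)=1$; thus for each fixed $z$ the sequence $(p_n(z))_n$ is a genuine probability distribution, which is precisely what makes Jensen's inequality available.

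The heart of the argument, and the step I expect to be the main obstacle, is the uniform equivalence
$$
I_n:=\int_\DD p_n(z)\,\frac{dA(z)}{\tau^2(z)}\asymp\|f_n\|^2_\alpha=1,
$$
with constants depending only on $\alpha$ and $s$. To establish it I would first identify $z\mapsto F_n(z)$ as a holomorphic function whose $k$-th Taylor coefficient is that of $f_n$ multiplied by $\Gamma(k+2+s)/\Gamma(k+2+\alpha)\asymp k^{s-\alpha}$; that is, $F_n$ is, up to constants, a fractional derivative of $f_n$ of order $s-\alpha$. The Forelli--Rudin estimates give $\|K^s_z\|^2_\alpha\asymp(1-|z|^2)^{-(2+2s-\alpha)}$, so that
$$
\frac{1}{\|K^s_z\|^2_\alpha}\,\frac{dA(z)}{\tau^2(z)}\asymp(1-|z|^2)^{2s-\alpha}\,dA(z),
$$
and here the hypothesis $s>\tfrac{\alpha-1}{2}$ is exactly what guarantees the admissible exponent $2s-\alpha>-1$. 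Consequently $I_n\asymp\|F_n\|^2_{\cA^2_{2s-\alpha}}$, and a coefficientwise computation using Stirling's formula (equivalently, the classical isomorphism theorems for fractional differentiation between weighted Bergman spaces, in the spirit of \cite{WX,Pau}) shows $\|F_n\|^2_{\cA^2_{2s-\alpha}}\asymp\|f_n\|^2_\alpha$, which proves the equivalence.

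Once $\sum_n p_n(z)=1$ and $I_n\asymp 1$ are available, the three parts follow quickly. For (1), Tonelli's theorem applied to the nonnegative series gives $\int_\DD B_{\alpha,s}(T)(z)\,\frac{dA(z)}{\tau^2(z)}=\sum_n\lambda_n I_n\asymp\sum_n\lambda_n=\mbox{Tr}(T)$. For (3), with $h$ convex and $h(0)=0$, Jensen's inequality for the probability weights $(p_n(z))_n$ gives $h(B_{\alpha,s}(T)(z))\leq\sum_n p_n(z)h(\lambda_n)$; integrating and using $I_n\lesssim 1$ yields $\int_\DD h(B_{\alpha,s}(T))\,\frac{dA}{\tau^2}\lesssim\sum_n h(\lambda_n)=\mbox{Tr}(h(T))$. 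For (2), with $h$ concave the reverse Jensen inequality gives $h(B_{\alpha,s}(T)(z))\geq\sum_n p_n(z)h(\lambda_n)$, and integrating together with the lower bound $I_n\gtrsim 1$ produces $\mbox{Tr}(h(T))\lesssim\int_\DD h(B_{\alpha,s}(T))\,\frac{dA}{\tau^2}$. The zero eigenvalues need no separate treatment: they contribute nothing to the traces since $h(0)=0$, while still being accounted for in the normalization $\sum_n p_n(z)=1$, so the probabilistic structure remains intact throughout.
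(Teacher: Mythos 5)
Your proposal is correct and follows essentially the same route as the paper: spectral decomposition of $T$, the key two-sided estimate $\int_\DD |\langle f,K^s_z\rangle|^2\,\|K^s_z\|^{-2}_\alpha\,\tau^{-2}(z)\,dA(z)\asymp\|f\|^2_\alpha$ (which the paper obtains by the same coefficientwise computation you describe, just written as a direct polar-coordinate integral rather than via fractional differentiation), and then Tonelli for (1) and Jensen with the probability weights $|\langle f_n,K^s_z\rangle|^2/\|K^s_z\|^2_\alpha$ for (2) and (3). Your explicit remarks on $\sum_n p_n(z)=1$ and on the role of $s>\frac{\alpha-1}{2}$ in making $2s-\alpha>-1$ are exactly the points the paper uses implicitly.
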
 
 \begin{proof}
Let $f= \displaystyle \sum _{n\geq 0}a_n z^n \in \cA^2_\alpha $. Write $K^s _z(\zeta )= \displaystyle \sum _{n\geq 0} c_n(s)\overline {z}^n \zeta ^n$. It is known that   $$c_n(s) \asymp (1+n)^{1+s}.$$
This implies that
 $$\| K^{s}_{re^{it}}\| ^2_\alpha = \| K^{s}_{r}\|^2_\alpha \asymp \frac{ 1  }{  (1-r)^{2+2s -\alpha} }.$$
 Then we have
 \[
 \begin{array}{lll}
 \displaystyle \int _\DD \frac{|\langle f,K^s _z\rangle|^2}{\|K^s _z\| ^2_\alpha}  \frac{dA(z)}{\tau^2(z)}  & = &  \displaystyle \int _0^1 \left ( \displaystyle \int _0^{2\pi}|\langle f,K^s _{re^{it}}\rangle|^2 \frac{dt}{2\pi}  \right )\frac{2rdr}{\|K^s _r\| ^2_\alpha \tau ^2 (r)} \\
 &\asymp &  \displaystyle \int _0^1 \left ( \displaystyle \sum_{n\geq 0}\frac{ |a_n|^2r^{2n} c_n^2(s)}{(1+n)^{2+2\alpha}} \right )(1-r)^{2s -\alpha}rdr \\
  &\asymp &  \displaystyle \sum_{n\geq 0}\frac{ |a_n|^2 }{(1+n)^{2\alpha -2s }}  \displaystyle \int _0^1 r^{2n+1}(1-r)^{2s -\alpha}dr\\
  &\asymp & \| f\| ^2_\alpha.\\
 \end{array}
 \]
 Let $(f_n)_{n\geq1}$ be an orthonormal basis of $\cA^2_\alpha$ containing a maximal orthonormal system of  eigenfunctions of $T$. Write  
$$
\langle T K^s _z,K^s _z \rangle = \displaystyle \sum _n\lambda _n|\langle f_nK^s _z\rangle|^2,\quad (\lambda _n(T)= \lambda _n).
$$ 
 Then,
 $$
 \displaystyle \int _{\DD} B_{\alpha,s} (T)(z)  \frac{dA(z)}{\tau^2(z)}   = \displaystyle \sum _n\lambda _n \displaystyle \int _\DD \frac{|\langle f_n,K^s _z \rangle|^2}{\|K^s _z\| ^2_\alpha}\frac{dA(z)}{\tau^2(z)} \asymp \displaystyle \sum _n\lambda _n =  \mbox{Tr}(T).
 $$
 To prove (2), Suppose that  $ h $ is concave
 \[
 \begin{array}{lll}
 \displaystyle \int _{\DD} h(B_{\alpha,s} (T)(z)) \frac{dA(z)}{\tau^2(z)}  &= & \displaystyle \int _\DD h\left (\displaystyle \sum _n\lambda _n  \frac{|\langle f_n,K^s _z\rangle|^2}{\|K^s _z\| ^2_\alpha}\right ) \frac{dA(z)}{\tau^2(z)} \\
 &\geqsim & \displaystyle \int _\DD\displaystyle \sum _n  h\left (\lambda _n  \right ) \frac{|\langle f_n,K^s _z\rangle|^2}{\|K^s _z\| ^2_\alpha}  \frac{dA(z)}{\tau^2(z)} \\
  &= &\displaystyle \sum _n  h\left (\lambda _n  \right ) \displaystyle \int _\DD \frac{|\langle f_n,K^s _z\rangle|^2}{\|K^s _z\| ^2_\alpha}  \frac{dA(z)}{\tau^2(z)}.\\
 & \asymp & \displaystyle \sum _n  h\left (\lambda _n  \right )
 \end{array}
 \]
 The convex case is obtained  in the same way.
 \end{proof}

\begin{lem} \label {Kgamma} Let  $\alpha >-1$, and let $(R_n)_n \in \cL _{\omega _\alpha}$. Let $h$ be a concave function such that $h(t)/t^{p}$ is increasing for some $p \in (0,1)$. Let $s  > \frac{1+p\alpha  -2p}{2p}$ and let $\mu$ be a positive Borel measure on $\DD $, then
$$
 \mbox{Tr} \ h(T_\mu) \ \asymp  \ \displaystyle \sum _{n\geq 1}h(B_{\alpha, s}(T_\mu)(z_n)),
$$
where the implied constants depend  on $\alpha,\ s, \ p$ and  $(R_n)_n$.
\end{lem}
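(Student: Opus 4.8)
The plan is to route everything through the averaged masses ${\hat \mu}(z_j)=\mu(R_j)/A(R_j)$ together with Theorem~\ref{TConcave}. Since $(a_n(\mu))$ is the decreasing rearrangement of $({\hat \mu}(z_j))$ and $h$ is increasing, the first inequality of Theorem~\ref{TConcave} gives $\sum_j h({\hat \mu}(z_j))\lesssim \mbox{Tr}\,h(T_\mu)$, while its last inequality (applicable because $h(t)/t^p$ is increasing) gives $\mbox{Tr}\,h(T_\mu)\lesssim \sum_j h({\hat \mu}(z_j))$. Hence $\mbox{Tr}\,h(T_\mu)\asymp \sum_j h({\hat \mu}(z_j))$, and it remains only to prove $\sum_n h(B_{\alpha,s}(T_\mu)(z_n))\asymp \sum_j h({\hat \mu}(z_j))$.

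For the lower estimate I would keep only the mass on $R_j$ in the definition of $B_{\alpha,s}(T_\mu)(z_j)$. On $R_j$ one has $|K^s_{z_j}(\zeta)|\asymp (1-|z_j|^2)^{-(2+s)}$ and $\omega_\alpha^2(\zeta)\asymp(1-|z_j|^2)^\alpha$, while $\|K^s_{z_j}\|_\alpha^2\asymp(1-|z_j|^2)^{-(2+2s-\alpha)}$ as computed in Proposition~\ref{modifyberezin}; multiplying out, the integrand is $\asymp(1-|z_j|^2)^{-2}$, so $B_{\alpha,s}(T_\mu)(z_j)\gtrsim \mu(R_j)/A(R_j)={\hat \mu}(z_j)$. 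Since $h$ is increasing and concave with $h(0)=0$, this yields $\sum_j h({\hat \mu}(z_j))\lesssim \sum_j h(B_{\alpha,s}(T_\mu)(z_j))$.

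For the upper estimate I would decompose $\mu=\sum_j \mu_{|R_j}$ and apply the subharmonic averaging inequality \eqref{IM} to $\zeta\mapsto K^s_z(\zeta)$ on each $R_j$, producing the pointwise domination $B_{\alpha,s}(T_\mu)(z)\lesssim \sum_j {\hat \mu}(z_j)\,B_{\alpha,s}(T_{\nu_j})(z)$, where $\nu_j$ is the area measure on a fixed dilate of $R_j$. Using that $h$ is subadditive, that $h(t)/t^p$ is increasing, and that $B_{\alpha,s}(T_{\nu_j})(z)\leq\|T_{\nu_j}\|\lesssim 1$ uniformly in $j$, I would pass to $h(B_{\alpha,s}(T_\mu)(z_n))\lesssim \sum_j h({\hat \mu}(z_j))\,B_{\alpha,s}(T_{\nu_j})(z_n)^p$. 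Summing over $n$ and interchanging the two summations reduces the matter to the uniform bound $\sup_j\sum_n B_{\alpha,s}(T_{\nu_j})(z_n)^p<\infty$, after which $\sum_n h(B_{\alpha,s}(T_\mu)(z_n))\lesssim \sum_j h({\hat \mu}(z_j))$.

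The main obstacle is precisely this last uniform summability, and it is where the hypothesis on $s$ enters. A direct computation gives $B_{\alpha,s}(T_{\nu_j})(z_n)\asymp (1-|z_n|^2)^{2+2s-\alpha}(1-|z_j|^2)^{\alpha+2}/|1-\bar z_n z_j|^{4+2s}$; replacing the lattice sum by the integral $\int_\DD(\cdots)^p\,dA(w)/(1-|w|^2)^2$ exactly as in Proposition~\ref{SBB} and invoking the Forelli–Rudin estimate (\cite{DS}, Lemma~2 p.~32), the resulting integral is bounded uniformly in $j$ if and only if $(2+2s-\alpha)p>1$, that is $s>\frac{1+p\alpha-2p}{2p}$, which is the standing assumption (and which also forces $s>\frac{\alpha-1}{2}$, so Proposition~\ref{modifyberezin} applies). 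Combining the two estimates gives $\sum_n h(B_{\alpha,s}(T_\mu)(z_n))\asymp\sum_j h({\hat \mu}(z_j))\asymp \mbox{Tr}\,h(T_\mu)$, which is the claim.
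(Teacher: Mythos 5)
Your proof is correct, and for the substantive half --- the bound $\sum_n h(B_{\alpha,s}(T_\mu)(z_n)) \lesssim \sum_j h(\hat\mu(z_j))$ --- it is essentially the paper's argument: the same decomposition of $\mu$ over the lattice, the same use of subadditivity of $h$ and of the monotonicity of $h(t)/t^p$, the same closed form for $B_{\alpha,s}(T_{\nu_j})(z_n)$, and the same Forelli--Rudin condition $(2+2s-\alpha)p>1$, i.e.\ $s>\frac{1+p\alpha-2p}{2p}$. (Incidentally, you carry the Jacobian $dA(w)/(1-|w|^2)^2$ correctly; in the paper's final display this factor is dropped from the right-hand integral, though the stated hypothesis on $s$ is exactly the one needed when it is kept.) Where you genuinely diverge is in the other half and in the overall framing: you stay entirely with lattice sums and obtain $\mbox{Tr}\,h(T_\mu)\lesssim \sum_n h(B_{\alpha,s}(T_\mu)(z_n))$ from Theorem \ref{TConcave} together with the elementary pointwise bound $B_{\alpha,s}(T_\mu)(z_j)\gtrsim\hat\mu(z_j)$ obtained by restricting the mass to $R_j$, whereas the paper routes this direction through Proposition \ref{modifyberezin} and the discretization $\int_\DD h(B_{\alpha,s}(T_\mu))\,dA/\tau^2\asymp\sum_n h(B_{\alpha,s}(T_\mu)(z_n))$, which it asserts without proof. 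Your variant is therefore somewhat more self-contained on that side (it needs only the norm asymptotics $\|K^s_z\|^2_\alpha\asymp(1-|z|^2)^{-(2+2s-\alpha)}$ from Proposition \ref{modifyberezin}); what you give up is the continuous statement $\mbox{Tr}\,h(T_\mu)\lesssim\int_\DD h(B_{\alpha,s}(T_\mu))\,dA/\tau^2$, which the paper obtains as a by-product but which is not needed for the lemma.
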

\begin{proof}
By Proposition \ref {modifyberezin} we have $$
 \mbox{Tr}(h(T_\mu)) \lesssim  \displaystyle \int _{\DD} h \left ( B_{\alpha, s} (T_\mu)(z)\right ) \frac{dA(z)}{\tau ^2(z)} \asymp \displaystyle \sum _{n\geq 1}h(B_{\alpha, s}(T_\mu)(z_n)).
 $$
Conversely, by Theorem \ref {TConcave}  we have 
$$ \mbox{Tr} \ h(T_\mu)  \asymp  \displaystyle \sum _{n}h\left (a_n(\mu)\right ).$$
So,
it suffices to verify that 
$$
 \displaystyle \int _{\DD}h\left ( B_{\alpha,s}(z)\right )\frac{dA(z)}{\tau^2(z)} \lesssim \displaystyle \sum _{n}h\left (a_n(\mu)\right ).
$$
Let $\nu =  \displaystyle \sum _n a_n(\mu)dA_{\alpha |bR_n}$. By Lemma \ref {restriction} $T_\mu \lesssim  T_\nu$. Then
$$
\langle T_\mu K^s_z,K^s _z\rangle \lesssim \langle T_\nu K^s_z,K^s _z\rangle
\lesssim \displaystyle \sum _n a_n(\mu) \displaystyle \int _{R_n}|K^s _z(w)|^2 dA_\alpha(w).
$$
Using  the concavity of $h$, we get
\begin{eqnarray*}
 \displaystyle \int _{\DD}h\left ( B_{\alpha,s}(T_\mu)(z)\right )\frac{dA(z)}{\tau^2(z)}& =&
\displaystyle \int _{\DD}h\left ( \frac{\langle T_\mu K^s_z,K^s _z\rangle }{\| K^s _z\|^2_\alpha}\right )\frac{dA(z)}{\tau^2(z)} \\
&\lesssim  & 
\displaystyle \int _{\DD}h\left (\displaystyle \sum _n a_n(\mu) \displaystyle \int _{R_n}\frac{|K^s _z(w)|^2 }{\| K^s _z\| ^2}   dA_\alpha (w)   \right )\frac{dA(z)}{\tau^2(z)}\\
&\lesssim &\displaystyle \int _{\DD}\displaystyle \sum _n h \left (  a_n(\mu) \displaystyle \int _{R_n}\frac{|K^s _z(w)|^2 }{\| K^s _z\| ^2}  dA_\alpha (w) \right ) \frac{dA(z)}{\tau^2(z)}.\\
\end{eqnarray*}
On the other hand, we have
\begin{eqnarray*}
\displaystyle \int _{R_n} \frac{ |K^s_z(\zeta)|^2}{\| K^s _z\|^2_\alpha} dA_\alpha (\zeta )& \asymp & \displaystyle \int _{R_n} \frac{ (1-|z|^2)^{2+2s -\alpha}}{|1-\overline{z}\zeta|^{4+2s }}dA_\alpha (\zeta )\\
& \asymp & \frac{ (1-|z|^2)^{2+2s -\alpha}}{|1-\overline{z}z_n|^{4+2s }}(1-|z_n|^2)^{2+\alpha}.\\
\end{eqnarray*}
Using the assumption  $h(t)/t^{p}$ is increasing , we get
\begin{eqnarray*}
 h \left ( a_n(\mu) \displaystyle \int _{R_n} \frac{ |K^s_z(\zeta)|^2}{\| K^s _z\|^2_\alpha}  dA_\alpha (w) \right ) & \lesssim  & h (a_n(\mu)) 
 \left ( \displaystyle \int _{R_n}  \frac{ |K^s_z(\zeta)|^2}{\| K^s _z\|^2_\alpha}  dA _\alpha (\zeta) \right )^p \\
&  \lesssim  & h  \left (a_n(\mu) \right )\left ( \frac{ (1-|z|^2)^{2+2s -\alpha }(1-|z_n|^2)^{2+\alpha}}    {|1-\overline{z}z_n|^{4+2s }}  \right )^p\\
\end{eqnarray*}
Combining all these inequalities, and using the fact that $s > \frac{1-2p+\alpha p}{2p}$, we obtain
\begin{eqnarray*}
\displaystyle \int _{\DD}  h\left ( B_{\alpha,s}(T_\mu)(z) \right )\frac{dAz)}{\tau ^2(z)}&\lesssim & 
\displaystyle \sum _n  h  \left (a_n(\mu) \right ) \displaystyle \int _{\DD}\left ( \frac{ (1-|z|^2)^{2+2s -\alpha }(1-|z_n|^2)^{2+\alpha}}    {|1-\overline{z}z_n|^{4+2s }}  \right )^pdA(z)\\
&\lesssim & \displaystyle \sum _n h  \left ( a_n(\mu) \right ).
\end{eqnarray*}
The proof is complete.\\
\end{proof}

\end{itemize}

Let $(b_n^{\alpha , s}(\mu))_n$ be the decreasing enumeration of $( B_{\alpha,s}(T_\mu)(z_n))_{n\geq 1}$. Theorem C is a direct consequence of the following result.
\begin{thm}\label {MBconcaveS}
Let $\omega \in \cW$ and $(R_n)_n \in \cL _\omega$. Let $\mu$ be a positive Borel measure on $\DD$ such that $T_\mu$ is compact on $\cA^2_\alpha$. Let $\rho : [1,+\infty) \to (0,+\infty[$ be an increasing positive function. Suppose  that there exist $\beta >1$ and $\gamma >1$ such that $\rho (t)/t^\gamma$ is increasing and $\rho (t)/t^\beta$ is decreasing. Then, for $s>\beta +\alpha -2$, we have
$$
\lambda _n(T_\mu) \asymp 1/\rho (n)\quad \iff \quad b_n^{\alpha,s} (\mu)  \asymp 1/\rho (n).
$$
\end{thm}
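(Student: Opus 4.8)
The plan is to follow verbatim the scheme of the proof of Theorem \ref{MBconcave}, with Lemma \ref{Kgamma} playing the role that Theorem \ref{Berezinconcave} played there, and with the conclusion extracted from Lemma \ref{A2}. The one new ingredient is a choice of auxiliary exponent $p$ reconciling the hypotheses of the two lemmas, and this is exactly where the numerical condition on $s$ is used.

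First I would fix a suitable $p$. Lemma \ref{A2} forces $p\in(0,1/\beta)$ (recall $\rho(t)/t^\beta$ is decreasing with $\beta>1$, so in particular $p<1$ is automatic), whereas Lemma \ref{Kgamma} requires $s>\frac{1+p\alpha-2p}{2p}$. The latter inequality is equivalent to $p(2s-\alpha+2)>1$, i.e. $p>\frac{1}{2s-\alpha+2}$ (the quantity $2s-\alpha+2$ being strictly positive under our hypotheses). Hence one needs the interval $\left(\frac{1}{2s-\alpha+2},\frac{1}{\beta}\right)$ to be non-empty, which amounts to $2s-\alpha+2>\beta$, i.e. $s>\frac{\beta+\alpha-2}{2}$. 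The assumption $s>\beta+\alpha-2$ guarantees this, and I would pick any $p$ in that interval.

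With such a $p$ fixed, Lemma \ref{Kgamma} yields a constant $B>0$ depending only on $\alpha,s,p,(R_n)$ (and \emph{not} on $h$) such that, for every increasing concave $h$ with $h(0)=0$ and $h(t)/t^p$ increasing,
$$\frac{1}{B}\sum_{n\geq 1}h\big(b_n^{\alpha,s}(\mu)\big)\leq \sum_{n\geq 1}h\big(\lambda_n(T_\mu)\big)\leq B\sum_{n\geq 1}h\big(b_n^{\alpha,s}(\mu)\big),$$
where I have used $\mathrm{Tr}(h(T_\mu))=\sum_n h(\lambda_n(T_\mu))$ (valid since $T_\mu$ is positive compact and $h(0)=0$) and the fact that $\sum_n h\big(B_{\alpha,s}(T_\mu)(z_n)\big)$ is merely a rearrangement of $\sum_n h\big(b_n^{\alpha,s}(\mu)\big)$. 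The uniformity of $B$ over all admissible $h$ is precisely the form of hypothesis needed to invoke Lemma \ref{A2}.

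Finally both implications follow from Lemma \ref{A2}. For a concave increasing $h$ with $h(0)=0$ the map $t\mapsto h(t)/t$ is non-increasing, so $h(Ct)\leq C\,h(t)$ and $h(t/C)\geq h(t)/C$ for $C\geq1$; hence two sequences that are $\asymp$ have $\asymp$ $h$-sums. Thus, if $b_n^{\alpha,s}(\mu)\asymp 1/\rho(n)$, then $\sum_n h(b_n^{\alpha,s}(\mu))\asymp\sum_n h(1/\rho(n))$, and the display gives $\sum_n h(\lambda_n(T_\mu))\asymp\sum_n h(1/\rho(n))$ for all admissible $h$; Lemma \ref{A2} applied to the decreasing sequence $(\lambda_n(T_\mu))$ then gives $\lambda_n(T_\mu)\asymp1/\rho(n)$. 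The converse is symmetric: assuming $\lambda_n(T_\mu)\asymp1/\rho(n)$ one gets $\sum_n h(b_n^{\alpha,s}(\mu))\asymp\sum_n h(1/\rho(n))$, and Lemma \ref{A2} applied to $(b_n^{\alpha,s}(\mu))$ closes the argument. I expect the only delicate point to be the bookkeeping of the exponent $p$ in the first step, namely checking that the $p$-range allowed by Lemma \ref{Kgamma} overlaps the range $(0,1/\beta)$ demanded by Lemma \ref{A2}; once $p$ is chosen, the rest is a direct transcription of the proof of Theorem \ref{MBconcave}.
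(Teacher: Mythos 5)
Your proof is correct and is essentially the paper's own argument: the paper's proof of Theorem \ref{MBconcaveS} is precisely the one-line reduction to Theorem \ref{TConcave} and Lemma \ref{Kgamma}, with Lemma \ref{A2} implicitly supplying the final passage from uniform trace estimates to $\lambda_n(T_\mu)\asymp 1/\rho(n)$ exactly as in the proof of Theorem \ref{MBconcave}, and your explicit verification that the $p$-range $\left(\frac{1}{2s-\alpha+2},\frac{1}{\beta}\right)$ is non-empty is a useful detail the paper omits. One small caveat: your computation actually shows the required condition is $s>\frac{\beta+\alpha-2}{2}$, and your assertion that the stated hypothesis $s>\beta+\alpha-2$ guarantees this is valid only when $\beta+\alpha\geq 2$ (when $\beta+\alpha<2$ the stated hypothesis is the weaker of the two) --- but this imprecision is inherited from the theorem's own formulation rather than being a flaw in your argument.
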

\begin{proof}
It is a consequence of Theorem \ref{TConcave} and Lemma \ref {Kgamma}.
\end{proof}

\section{Composition operators}
We consider composition operators on weighted analytic spaces on  the unit disc $\DD$. 
Let $\omega \in \cW $,  ${\mathcal {H}}_{\omega}$ will denote the space of analytic functions  $ f\in H(\mathbb{D})$ such that $f' \in\cA^2_\omega$.

The space $\cH _\omega$ becomes a Hilbert space if endowed with the norm $\| . \| _{\cH _\omega}$, given by
$$
\| f\| _{\cH _\omega}^2:=  |f(0)|^2+ \int_\mathbb{D} |f'(z)|^{2}\,dA_{\omega}(z).
$$
For $\omega = \omega  _\alpha $, the space $\cH _{\omega _\alpha} $ will be denote by $\cH _\alpha $.\\

By the classical Littlewood--Paley identity, we have $ {\mathcal {H}}_{1}= H^2$ is the Hardy space. Note also that for $\alpha \in [0,1)$, ${\mathcal {H}}_{\alpha} := \cD _\alpha$ are the weighted Dirichlet spaces and for $\alpha >1$, ${\mathcal {H}}_{\alpha} = \cA ^2_{\alpha -2}$ are the  weighted  standard Bergman spaces. For more informations on these spaces see \cite {Gar, HKZ, EKMR}.\\

Let  $\varphi$ be a holomorphic self map of $ \mathbb{D}$.
 The composition operator $C_{\varphi} $ with symbol $\varphi$  acting on ${\mathcal {H}}_{\omega}$ is defined by  $$C_{\varphi} f =f \circ \varphi ,\quad{ f  \in {\mathcal {H}}_{\omega}}. $$

Several papers gave some general criterions for boudeddness, compactness and membership to Schatten classes of composition operators (see for instance, \cite{ShaA, Lue, ZhuJOT, WX, LLQRMAMS, EKSY, KL}).\\


The Nevanlinna counting function, $ N_{\varphi,\omega}$, of $\varphi$ associated with $\cH _{\omega}$ is defined by
\[
 N_{\varphi,\omega}(w)=\left\{
\begin{array}{ccc}
\displaystyle \sum_{z\in \varphi^{-1}(w)} \omega ^2(z )  \ \in (0,\infty ]&if & w\in \varphi(\DD),\\\\
0&if& w\notin\varphi(\DD).\\
\end{array}
\right.
\]
In what follows, $\mu _{\varphi, \omega}$ will denote the measure given by
$$
d\mu _{\varphi , \omega}(w)=\frac{N_{\varphi , \omega}(w)}{\omega ^2(w)}dA(w),\quad (w\in \DD).
$$
The change of variable formula \cite {Ale}, can be written as follows
$$
\displaystyle \int _\DD |(f\circ \varphi)'(z)|^2 dA_\omega (z)=\displaystyle \int _\DD |f'(z)|^2\omega^2(z) d\mu _{\varphi , \omega}(z).
$$
Using this identity, it is clear that the composition operator $C_\varphi$ on $\cH _\omega$ is closely related to the Toeplitz operator $T_{\mu_{\varphi,\omega}}$ on $\cA^2_\omega$. Indeed, if we suppose that $\varphi (0) =0$. Then the subspace $\cH^0_\omega := \{ f \in \cH _\omega :\ f(0)=0\}$ is  reduced by $C_\varphi$. If $T: \cH^0_\omega \to \cH^0_\omega$, denotes the restriction of $C_\varphi$  to $\cH^0_\omega $, then $T^*T$ is unitarily equivalent to $T_{\mu _{\varphi,\omega}}$ on $\cA^2_\omega$. Namely,
$$
T^*T = V^*T_{\mu_{\varphi, \omega}}V,
$$
where $V f= f'$ is the derivation operator which defines a unitary operator from $\cH^0_\omega$ onto $\cA^2_\omega$. As consequence, we have
\begin{prop}\label{CT}
Let $\varphi$ be an  analytic self map of $\DD$ such that $\varphi (0)= 0$. Then $C_\varphi$ is compact on $\cH _\omega$ if and only if $T_{\mu_{\varphi, \omega}}$ is compact on $\cA ^2_\omega$. In this case, we have 
$$
 s^2_{n}(C_\varphi , \cH _\omega )=  \lambda _n(T_{\mu_{\varphi , \omega}}, \cA^2_\omega).
$$
\end{prop}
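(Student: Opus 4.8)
The plan is to realize $C_\varphi$ as a block operator for the orthogonal decomposition $\cH_\omega = \C \oplus \cH^0_\omega$ and then transport the compression $T = C_\varphi|_{\cH^0_\omega}$ to $\cA^2_\omega$ by the derivation map. First I would record that, for the inner product attached to $\|f\|_{\cH_\omega}^2 = |f(0)|^2 + \int_\DD |f'|^2\,dA_\omega$, the constants and $\cH^0_\omega = \{f\in\cH_\omega : f(0)=0\}$ are orthogonal. Since $\varphi(0)=0$ we have $(f\circ\varphi)(0)=f(0)$, so $C_\varphi$ leaves both summands invariant: it fixes the constants and maps $\cH^0_\omega$ into itself. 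Thus $\cH^0_\omega$ reduces $C_\varphi$, and writing $T$ for the restriction, the singular values of $C_\varphi$ on $\cH_\omega$ agree with those of $T$ up to the trivial value contributed by the one-dimensional constant summand, on which $C_\varphi$ acts as the identity. Next I would check that $Vf=f'$ is a unitary from $\cH^0_\omega$ onto $\cA^2_\omega$: it is isometric because $\|f\|_{\cH_\omega}=\|f'\|_\omega$ when $f(0)=0$, and it is onto because every $g\in\cA^2_\omega$ has a primitive $z\mapsto\int_0^z g$ lying in $\cH^0_\omega$.

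The central identity to establish is $T^*T = V^* T_{\mu_{\varphi,\omega}} V$, which I would prove at the level of sesquilinear forms. For $f,g\in\cH^0_\omega$ both $f\circ\varphi$ and $g\circ\varphi$ vanish at $0$, so
$$\langle T^*Tf, g\rangle_{\cH_\omega} = \langle C_\varphi f, C_\varphi g\rangle_{\cH_\omega} = \int_\DD (f\circ\varphi)'\,\overline{(g\circ\varphi)'}\,dA_\omega.$$
Polarizing the change of variables formula quoted above turns the right-hand side into $\int_\DD f'\,\overline{g'}\,\omega^2\,d\mu_{\varphi,\omega}$, which is exactly $\langle T_{\mu_{\varphi,\omega}} f', g'\rangle_\omega = \langle V^* T_{\mu_{\varphi,\omega}} V f, g\rangle_{\cH_\omega}$, using the standard expression $\langle T_\mu h_1, h_2\rangle_\omega = \int_\DD h_1\overline{h_2}\,\omega^2\,d\mu$ for the quadratic form of a Toeplitz operator. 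Since $f,g$ range over all of $\cH^0_\omega$, the operator identity follows.

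Finally, because $V$ is unitary, $T^*T$ and $T_{\mu_{\varphi,\omega}}$ are unitarily equivalent, so one is compact precisely when the other is; combined with the block structure (the constant part being finite rank) this gives the stated equivalence between compactness of $C_\varphi$ on $\cH_\omega$ and of $T_{\mu_{\varphi,\omega}}$ on $\cA^2_\omega$. Comparing the eigenvalue sequences of the two unitarily equivalent positive operators yields $s_n(T)^2 = \lambda_n(T^*T) = \lambda_n(T_{\mu_{\varphi,\omega}}, \cA^2_\omega)$; identifying these $s_n(T)$ with $s_n(C_\varphi,\cH_\omega)$ — the constant function being fixed by $C_\varphi$ and contributing only the trivial singular value — produces the asserted relation $s_n^2(C_\varphi,\cH_\omega) = \lambda_n(T_{\mu_{\varphi,\omega}},\cA^2_\omega)$. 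I expect no serious obstacle: the only points requiring genuine care are the polarization of the change of variables identity (which is stated only for $|f'|^2$) and the bookkeeping of the constant function, which is what lets one pass cleanly from the singular values of $C_\varphi$ to the eigenvalues of $T^*T$.
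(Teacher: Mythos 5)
Your proposal is correct and follows essentially the same route as the paper: the paper likewise decomposes $\cH_\omega=\C\oplus\cH^0_\omega$, notes that $\varphi(0)=0$ makes $\cH^0_\omega$ reducing, and writes $T^*T=V^*T_{\mu_{\varphi,\omega}}V$ with $V f=f'$ unitary, the only input being the change-of-variables formula. Your added care about polarizing that formula and about the trivial singular value coming from the constants is exactly the (small) bookkeeping the paper leaves implicit.
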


As a direct consequence of Proposition \ref{CT} and trace estimates for Toeplitz operators,  we obtain the following results.

\begin{thm} \label {Tcomp}  Let $(R_n)\in \cL _\omega$. Let $p\geq 1$ and $h:\ [0,+\infty) \to [0,+\infty)$ be an increasing function such that $h(t^p)$ is convex and $h(0)=0$. Let $\varphi $ be an analytic self map of  $\DD$ satisfying $\varphi (0)=0$.
We have 
$$ 
\displaystyle \sum _{n} h\left ( \frac{1}{B}\left( \frac{\mu _{\varphi , \omega}(R_{n})}  {A(R_{n})}  \right) \right)
\leq \displaystyle \sum _{n}h\left (s^2 _n(C_\varphi , \cH _\omega ) \right ) \leq  \displaystyle \sum _{n}h\left (B\left( \frac{\mu _{\varphi , \omega}(R_{n})}{A(R_{n})}\right )\right ),
$$
where $B >0$ depends on $\omega $ and $p$.
\end{thm}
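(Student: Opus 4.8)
The plan is to reduce the statement about the singular values of $C_\varphi$ to one about the eigenvalues of a Toeplitz operator, where the trace estimates of Section \ref{Trace} apply verbatim. Since $\varphi(0)=0$, Proposition \ref{CT} provides the unitary equivalence $T^*T=V^*T_{\mu_{\varphi,\omega}}V$, hence $s_n^2(C_\varphi,\cH_\omega)=\lambda_n(T_{\mu_{\varphi,\omega}},\cA^2_\omega)$ for every $n$; in particular $C_\varphi$ is compact on $\cH_\omega$ exactly when $T_{\mu_{\varphi,\omega}}$ is compact on $\cA^2_\omega$. Thus it is enough to estimate $\sum_n h(\lambda_n(T_{\mu_{\varphi,\omega}}))$, which is precisely the content of Theorem \ref{partialsum} applied to the measure $\mu=\mu_{\varphi,\omega}$, the hypotheses being met because $h(t^p)$ is convex with $p\geq 1$ and $h(0)=0$.

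Concretely, I would first record the identity $s_n^2(C_\varphi,\cH_\omega)=\lambda_n(T_{\mu_{\varphi,\omega}})$ from Proposition \ref{CT}, and then invoke Theorem \ref{partialsum} with $\mu=\mu_{\varphi,\omega}$ to obtain, for every $n\geq 1$,
$$\sum_{j=1}^n h\left(\tfrac{1}{B}a_j(\mu)\right)\leq \sum_{j=1}^n h(\lambda_j(T_\mu))\leq \sum_{j=1}^n h(Ba_j(\mu)),$$
with $B$ depending on $\omega$, $(R_n)$ and $p$. Letting $n\to\infty$ and using that every term is nonnegative (so the partial sums increase to the total sums) upgrades this to the corresponding inequality for the full series.

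The one point deserving a word of care is the passage between the ordered sequence $a_j(\mu)$, which is the decreasing rearrangement of $\big(\mu_{\varphi,\omega}(R_n)/A(R_n)\big)_n$, and the unordered quantities $\mu_{\varphi,\omega}(R_n)/A(R_n)$ appearing in the statement. Since $h$ is nonnegative (being increasing with $h(0)=0$), each series $\sum_j h\big(c\,a_j(\mu)\big)$ is a sum of nonnegative terms and is therefore rearrangement invariant, so it equals $\sum_n h\big(c\,\mu_{\varphi,\omega}(R_n)/A(R_n)\big)$ for $c\in\{1/B,\,B\}$. Substituting these two identities into the displayed inequality yields exactly the claimed two-sided bound.

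I do not expect a genuine obstacle here: the analytic work has already been carried out in Theorem \ref{partialsum} and the trace estimates underlying it, while the change-of-variable identity behind Proposition \ref{CT} transports everything to the composition-operator setting. If one prefers not to take compactness as a standing hypothesis, I would dispose of the remaining case by noting that when $T_{\mu_{\varphi,\omega}}$ is not compact one has $\mu_{\varphi,\omega}(R_n)/A(R_n)\not\to 0$, so both outer sums diverge and the inequality holds trivially.
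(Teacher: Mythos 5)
Your proposal is correct and follows exactly the route the paper intends: Proposition \ref{CT} transports the problem to the Toeplitz operator $T_{\mu_{\varphi,\omega}}$, and Theorem \ref{partialsum} (which is the right trace estimate here, since $h$ need not itself be convex, only $h(t^p)$) gives the two-sided bound on the partial sums, after which monotone convergence and rearrangement invariance of nonnegative series finish the argument. The paper gives no further detail, describing the theorem as a direct consequence of these two ingredients, so your write-up matches its proof in substance.
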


\begin{cor}\label {sComposition} Let $\omega \in \cW$ and let $(R_n)\in \cL _\omega$. Let  $\rho: [1,+\infty )  \to (0,+\infty )$ be an increasing function such that $\rho (x)/x^A$ is decreasing for some $A>0$.
Let $\varphi$ be an  analytic self map of $\DD$ such that $\varphi (0)= 0$ and $C_\varphi$ is compact on $\cH_\omega$. Then
\begin{enumerate}
\item $s_n(C_\varphi) =O\left ( 1/\rho (n) \right )\quad  \iff \quad a_n(\mu_{\varphi,\omega}) \asymp O \left (1/\rho ^2 (n) \right ).$
\item $s_n(C_\varphi ) \asymp 1/\rho (n) \quad  \iff \quad a_n(\mu_{\varphi,\omega}) \asymp 1/\rho ^2(n).$
\end{enumerate} 
\end{cor}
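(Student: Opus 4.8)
The plan is to reduce the corollary to Theorem A by passing through the unitary equivalence supplied by Proposition~\ref{CT}. Since $\varphi(0)=0$ and $C_\varphi$ is compact on $\cH_\omega$, that proposition tells us that $T_{\mu_{\varphi,\omega}}$ is compact on $\cA^2_\omega$ and that
$$
s_n^2(C_\varphi,\cH_\omega)=\lambda_n(T_{\mu_{\varphi,\omega}},\cA^2_\omega),\qquad (n\geq 1).
$$
Hence both decay statements about $s_n(C_\varphi)$ translate, after squaring, into statements about the eigenvalues of the Toeplitz operator $T_{\mu_{\varphi,\omega}}$: the relation $s_n(C_\varphi)=O(1/\rho(n))$ is equivalent to $\lambda_n(T_{\mu_{\varphi,\omega}})=O(1/\rho^2(n))$, and likewise $s_n(C_\varphi)\asymp 1/\rho(n)$ is equivalent to $\lambda_n(T_{\mu_{\varphi,\omega}})\asymp 1/\rho^2(n)$.

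The one point that requires a remark is that the function to which we apply Theorem A is $\rho^2$, not $\rho$. I would first check that $\rho^2$ satisfies the hypotheses of Theorem A: it is increasing because $\rho$ is increasing and positive, and since $\rho(x)/x^A$ is decreasing, its square $\rho^2(x)/x^{2A}$ is again decreasing. Thus $\rho^2(x)/x^{B}$ is decreasing with $B=2A>0$, so Theorem A applies verbatim to the measure $\mu_{\varphi,\omega}$ with $\rho$ replaced by $\rho^2$.

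Applying Theorem A in this form gives, on the one hand,
$$
\lambda_n(T_{\mu_{\varphi,\omega}})=O(1/\rho^2(n))\iff a_n(\mu_{\varphi,\omega})=O(1/\rho^2(n)),
$$
and, on the other hand,
$$
\lambda_n(T_{\mu_{\varphi,\omega}})\asymp 1/\rho^2(n)\iff a_n(\mu_{\varphi,\omega})\asymp 1/\rho^2(n).
$$
Chaining these equivalences with those of the first paragraph yields exactly parts (1) and (2) of the corollary. There is no serious obstacle here: the argument is a direct composition of Proposition~\ref{CT} with Theorem A, and the only thing to be careful about is the bookkeeping of the square, namely checking that the growth condition on $\rho$ is preserved under $\rho\mapsto\rho^2$ and that the exponent $A$ is correspondingly replaced by $2A$.
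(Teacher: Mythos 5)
Your proposal is correct and follows exactly the route the paper intends: the paper states the corollary as a direct consequence of Proposition~\ref{CT} (which gives $s_n^2(C_\varphi,\cH_\omega)=\lambda_n(T_{\mu_{\varphi,\omega}})$) combined with Theorem A applied to $\rho^2$, and your check that $\rho^2(x)/x^{2A}$ remains decreasing is precisely the bookkeeping needed to make that application legitimate.
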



\section{Composition operators with univalent symbol on $\cH_\alpha$}  \label{US}

The goal of this section is to provide some concrete examples. We will focus our attention  on composition operators $C_\varphi$ acting on $\cH_\alpha$ such that $\varphi$ is univalent. We will give estimates of the singular values of $C_\varphi$ in terms of the pull-back measure induced by $\varphi $.
\subsection{Composition operators with univalent symbol}
Let $\varphi$ be an analytic self map of $\DD$. The pull-back measure associated with $\varphi$  is the positive borelian measure on $\DD$ defined by 
$$
m_\varphi (B) = m(\{ \zeta \in \TT : \ \varphi (\zeta) \in B\ \}),
$$
where $m$ is the  normalized Lebesgue measure of $\TT$.\\

Let $\Omega$ be a simply connected 
subdomain of $\DD$ which contains $0$.   Let $\varphi $ be a conformal map of $\DD$ onto $\Omega$. 
 Let $\sigma$ be an automorphism of $\DD$. Since $C_\sigma$ is an invertible operator on $\cH_\alpha$, we have   $s_n(C_\varphi,\cH _\alpha) \asymp s_n(C_{\varphi \circ \sigma},\cH _\alpha)\ (n\to \infty)$. So, without loss of generality we suppose, in the sequel, that $\varphi (0)=0$.\\

Let $n,j$ be integers such that $n\geq 1$ and $j\in \{0,2,..,2^n-1\}$. The dyadic square $R_{n,j}$ is given by 
$$
R_{n,j} = \Big\{z\in \DD \,;\ 1 - 2^{-n} \leq |z| < 1-\frac{1}{2^{n+1}}\ \text{and}\
\frac{2j \pi}{2^n} \leq \arg z < \frac{2(j+ 1) \pi}{2^n}\,\Big\}.
$$
By following the same proofs, in all the previous results, one can see that we can replace $(R_n)_n \in \cL _{\omega _\alpha}$ by $(R_{n,j})_{n,j}$.
For our purpose, it is more convenient to consider the Carleson boxes $W_{n,j}$ which are given by
$$
W_{n,j} = \Big\{z\in \DD \,;\ 1 - 2^{-n} \leq |z| \ \text{and}\
\frac{2j \pi}{2^n} \leq \arg z < \frac{2(j+ 1) \pi}{2^n}\,\Big\}.
$$ 
The main result of this section is  the following theorem.

\begin{thm} \label {comp} Let $\varphi $ be a univalent analytic self map of $\DD$.
Let $h:\ [0,+\infty) \to [0,+\infty)$ be an increasing function such that $h(0)=0$. Suppose that there exists $p\geq 1$ such that $ h(t^p)$ is  convex. Let $\alpha >0$, we have 
$$ 
\displaystyle \sum _{n,j}h\left (\frac{1}{B}\left(2^{n}m _{\varphi }(W_{n,j})\right )^\alpha \right ) \leq \displaystyle \sum _{n}h\left (s^2 _n(C_\varphi , \cH _\alpha ) \right ) \leq  \displaystyle \sum _{n,j}h\left (B\left(2^{n}m _{\varphi }(W_{n,j})\right )^\alpha \right ),
$$
where $B>0$ depends on $\alpha $ and $p$.
\end{thm}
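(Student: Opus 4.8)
The plan is to reduce Theorem \ref{comp} to the Toeplitz trace estimate already established in Theorem \ref{Tcomp}, by translating the geometric quantity $2^n m_\varphi(W_{n,j})$ into the averaged Nevanlinna mass $\mu_{\varphi,\omega_\alpha}(R_{n,j})/A(R_{n,j})$. First I would recall that by the remark following the definition of the Carleson boxes, every result proved for a lattice $(R_n)_n \in \cL_{\omega_\alpha}$ remains valid with the dyadic squares $(R_{n,j})_{n,j}$ in place of $(R_n)_n$; so Theorem \ref{Tcomp} applies directly to the family $(R_{n,j})_{n,j}$, giving
$$
\displaystyle \sum _{n,j} h\left(\tfrac{1}{B_1}\tfrac{\mu_{\varphi,\omega_\alpha}(R_{n,j})}{A(R_{n,j})}\right) \leq \displaystyle \sum_n h\left(s_n^2(C_\varphi,\cH_\alpha)\right) \leq \displaystyle \sum_{n,j} h\left(B_1\tfrac{\mu_{\varphi,\omega_\alpha}(R_{n,j})}{A(R_{n,j})}\right),
$$
for any $h$ with $h(t^p)$ convex. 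So the entire theorem follows once I prove the two-sided comparison
$$
\frac{\mu_{\varphi,\omega_\alpha}(R_{n,j})}{A(R_{n,j})} \asymp \left(2^n m_\varphi(W_{n,j})\right)^\alpha,
$$
with constants depending only on $\alpha$ (the freedom to rescale $B$ absorbs the multiplicative constants inside $h$, since $h(t^p)$ convex and increasing lets one compare $h(cx)$ with $h(Cx)$ after adjusting $B$).

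The core computation is the estimate for $\mu_{\varphi,\omega_\alpha}(R_{n,j})$. By definition $d\mu_{\varphi,\omega_\alpha}(w) = N_{\varphi,\omega_\alpha}(w)\,\omega_\alpha^{-2}(w)\,dA(w)$ with $\omega_\alpha^2(w)\asymp(1-|w|^2)^\alpha$, and since $\varphi$ is univalent the Nevanlinna function reduces to a single term: $N_{\varphi,\omega_\alpha}(w) = \omega_\alpha^2(\varphi^{-1}(w)) \asymp (1-|\varphi^{-1}(w)|^2)^\alpha$ for $w\in\varphi(\DD)=\Omega$, and $0$ otherwise. Thus on $\Omega$ one has $d\mu_{\varphi,\omega_\alpha}(w) \asymp (1-|\varphi^{-1}(w)|^2)^\alpha (1-|w|^2)^{-\alpha}\,dA(w)$. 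The plan is then to invoke the Koebe distortion theorem for the univalent map $\varphi^{-1}\colon\Omega\to\DD$ (equivalently for $\varphi$): for $w$ in a Carleson/Whitney region near the boundary, $1-|\varphi^{-1}(w)| \asymp |(\varphi^{-1})'(w)|\,(1-|w|)$, and the harmonic-measure interpretation gives $1-|\varphi^{-1}(w)| \asymp m_\varphi(W_{n,j})\,2^n \cdot (1-|w|)/(1-|w|)$ — more precisely the boundary values of $\varphi$ realize $m_\varphi(W_{n,j})$ as the harmonic measure (arc-length of $\{\zeta: \varphi(\zeta)\in W_{n,j}\}$), which by Koebe is comparable to $1-|\varphi^{-1}(z_{n,j})|$ where $z_{n,j}$ is the center of $R_{n,j}$. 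Integrating $d\mu_{\varphi,\omega_\alpha}$ over $R_{n,j}$ and using that $1-|\varphi^{-1}(w)|$ is essentially constant $\asymp m_\varphi(W_{n,j})$ on $R_{n,j}\cap\Omega$ (again by distortion), together with $A(R_{n,j})\asymp 2^{-2n}$ and $(1-|w|)\asymp 2^{-n}$ on $R_{n,j}$, yields
$$
\frac{\mu_{\varphi,\omega_\alpha}(R_{n,j})}{A(R_{n,j})} \asymp \frac{(m_\varphi(W_{n,j}))^\alpha \, 2^{n\alpha} \cdot 2^{-2n}}{2^{-2n}} = \left(2^n m_\varphi(W_{n,j})\right)^\alpha,
$$
which is exactly the claimed comparison.

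The step I expect to be the main obstacle is making the harmonic-measure identity $m_\varphi(W_{n,j}) \asymp 2^{-n}\,(1-|\varphi^{-1}(z_{n,j})|)/(1-|z_{n,j}|) \asymp 1-|\varphi^{-1}(z_{n,j})|$ fully rigorous and uniform over all dyadic indices, and in handling the boxes $W_{n,j}$ that $\varphi(\DD)$ meets only partially (so that $R_{n,j}\cap\Omega$ may be a proper, possibly irregular subset). The difficulty is twofold: one must control the distortion of $\varphi^{-1}$ across a whole Carleson box rather than at a single point, which requires a Harnack-chain or standard conformal-distortion argument valid up to the boundary, and one must verify that the contribution of the part of $R_{n,j}$ lying outside $\Omega$ (where $N_{\varphi,\omega_\alpha}=0$) does not destroy the lower bound — this is where univalence is essential, since it guarantees $N_{\varphi,\omega_\alpha}$ is a genuine distortion factor rather than a sum, and where the comparison of the harmonic measure of the arc $\varphi^{-1}(W_{n,j}\cap\partial\DD)$ with the geometric width of $\Omega$ inside $W_{n,j}$ must be established. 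Once these distortion estimates are in place the rest is the routine substitution described above, and the theorem follows by applying Theorem \ref{Tcomp} to $(R_{n,j})_{n,j}$.
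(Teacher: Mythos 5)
Your first step---applying Theorem \ref{Tcomp} to the dyadic lattice $(R_{n,j})_{n,j}$---is exactly the paper's, but the pointwise, box-by-box comparison you then rely on,
$$
\frac{\mu_{\varphi,\omega_\alpha}(R_{n,j})}{A(R_{n,j})} \asymp \left(2^{n}m_\varphi(W_{n,j})\right)^{\alpha},
$$
is false in general, and this is a genuine gap rather than a technicality to be deferred. For the lower bound: $R_{n,j}$ is only the outermost layer of the Carleson box $W_{n,j}=\bigcup_{l\geq n}\bigcup_{k}R_{l,k}$, and nothing prevents a univalent image $\Omega=\varphi(\DD)$ from entering $W_{n,j}$ only through the layers $l>n$ (say, a thin tongue coming in from an adjacent sector close to $\TT$ and accumulating on an arc of the circle inside the angular window). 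Then $\mu_{\varphi,\omega_\alpha}(R_{n,j})=0$ while $m_\varphi(W_{n,j})>0$, so the left-hand side cannot dominate the right-hand side. The supporting heuristic is also wrong: $N_\varphi(w)\asymp 1-|\varphi^{-1}(w)|$ is \emph{not} essentially constant on $R_{n,j}\cap\Omega$; it vanishes at $\partial\Omega$ and can range from $0$ up to its supremum inside a single box, and only that supremum is controlled by the pull-back measure (this is precisely the content of Theorem \ref{LLQR}, not of the Koebe distortion theorem).

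What the paper actually establishes are two \emph{one-sided} pointwise bounds, each involving an \emph{enlarged} box: $\mu_{\varphi,\alpha}(R_{n,j})/A(R_{n,j})\lesssim \bigl(2^{n}m_\varphi(W^{c}_{n,j})\bigr)^{\alpha}$, from the bound $\sup_{W}N_\varphi\lesssim m_\varphi(W^{c_1})$ of Lef\`evre--Li--Queff\'elec--Rodr\'iguez-Piazza, and $\bigl(2^{n}m_\varphi(W_{n,j})\bigr)^{\alpha}\lesssim \mu_{\varphi,\alpha}(W^{\kappa}_{n,j})/A(W^{\kappa}_{n,j})$, from a Hardy--Littlewood sub-mean-value inequality for $N_\varphi^{\alpha}$ (Lemma \ref{HL}). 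The mismatch between $R$-boxes and $W$-boxes, and between a box and its dilate, is then repaired only at the level of the \emph{sums}: Lemma \ref{cb} decomposes each $W_{n,j}$ into the deeper dyadic squares and uses the convexity of $h(t^{p})$ to redistribute their contributions, while the finite overlap of the dilated boxes absorbs the constants $c_1,c_2,\kappa$. Your plan contains neither of these mechanisms, and without them the lower estimate in the theorem cannot be reached along the route you describe.
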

Let $( m_n(\varphi))_{n\geq 1} $ be the decreasing enumeration of $  (2^{n}m_\varphi (W_{n,j}) )_{n,j}$. As a consequence of Theorem \ref {comp}, Lemma \ref{A1} and Lemma \ref {A2}, we obtain the following result.

\begin{cor} \label {compcoro}
Let  $\alpha >0$. Let $\varphi$ be a univalent analytic self map of $\DD$. Let $\rho : [1,+\infty) \to (0,+\infty)$ be an increasing function such that $\rho (x)/x^A $ is decreasing for some $A>0$. 
Then the following are equivalent.
\begin{enumerate}
\item $s_n(C_\varphi, \cH _\alpha) \asymp 1/\rho (n)$.
\item  $m_n(\varphi) \asymp 1/\rho ^{2/\alpha} (n)$.
\end{enumerate} 
\end{cor}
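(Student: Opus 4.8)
The plan is to deduce Corollary \ref{compcoro} directly from Theorem \ref{comp} by choosing the functions $h$ supplied by the two spectral lemmas, and to transfer the resulting estimates between the scales $s_n(C_\varphi,\cH_\alpha)$ and $m_n(\varphi)$ by keeping careful track of the exponents $\alpha$ and $2$. First I would record the combinatorial reduction: since the families $(W_{n,j})$ and $(R_{n,j})$ both carry an enumeration, let $(m_n(\varphi))_{n\ge 1}$ be the decreasing rearrangement of $(2^n m_\varphi(W_{n,j}))_{n,j}$, so that $(m_n(\varphi)^\alpha)_n$ is the decreasing rearrangement of $((2^n m_\varphi(W_{n,j}))^\alpha)_{n,j}$. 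With this bookkeeping, Theorem \ref{comp} says exactly that for every admissible $h$,
\begin{equation*}
\sum_{n\ge 1} h\!\left(\tfrac{1}{B}\,m_n(\varphi)^\alpha\right)\ \le\ \sum_{n\ge 1} h\!\left(s_n^2(C_\varphi,\cH_\alpha)\right)\ \le\ \sum_{n\ge 1} h\!\left(B\,m_n(\varphi)^\alpha\right).
\end{equation*}

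Next I would set up the two decreasing sequences to which Lemma \ref{A1} applies. Put $a_n := s_n^2(C_\varphi,\cH_\alpha)$ and $b_n := m_n(\varphi)^\alpha$; both are positive and decreasing. The functions $h_{\beta,\delta}(t)=(t^\beta-\delta)^+$ with $\beta\in(0,1]$ are exactly the test functions of Lemma \ref{A1}, and for $\beta=1$ they are convex (more generally $h_{\beta,\delta}(t^p)$ is convex once $p\ge 1/\beta$), hence admissible in Theorem \ref{comp}. Feeding $h=h_{\beta,\delta}$ into the displayed two-sided inequality yields precisely the hypothesis (\ref{AIconvex}) of Lemma \ref{A1} for the pair $(a_n)$ against the comparison sequence $1/\rho_0(n)$, where $\rho_0$ is the function governing $a_n$. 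Thus the real content is to show the equivalence
\begin{equation*}
a_n \asymp \frac{1}{\rho_0(n)} \iff b_n \asymp \frac{1}{\rho_0(n)},
\end{equation*}
and then to rewrite it in terms of $s_n$ and $m_n$ by undoing the squaring and the $\alpha$-th power.

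The exponent translation is the step that must be done cleanly. Suppose first that $s_n(C_\varphi,\cH_\alpha)\asymp 1/\rho(n)$ with $\rho(x)/x^A$ decreasing. Then $a_n=s_n^2\asymp 1/\rho^2(n)$, and $\rho^2(x)/x^{2A}$ is still decreasing, so $\rho^2$ satisfies the growth hypothesis of Lemma \ref{A1}. The two-sided trace inequality above, applied with $h=h_{1,\delta}$ for all $\delta\in(0,1)$, gives (\ref{AIconvex}) for the decreasing sequence $b_n=m_n(\varphi)^\alpha$ against $1/\rho^2(n)$; Lemma \ref{A1} then forces $b_n\asymp 1/\rho^2(n)$, i.e. $m_n(\varphi)^\alpha\asymp 1/\rho^2(n)$, and raising to the power $1/\alpha$ yields $m_n(\varphi)\asymp 1/\rho^{2/\alpha}(n)$, which is assertion (2). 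The converse runs symmetrically: from $m_n(\varphi)\asymp 1/\rho^{2/\alpha}(n)$ one gets $b_n\asymp 1/\rho^2(n)$, and applying the two-sided inequality and Lemma \ref{A1} to the sequence $a_n$ gives $s_n^2\asymp 1/\rho^2(n)$, hence $s_n\asymp 1/\rho(n)$. One checks in passing that whenever $\rho(x)/x^A$ is decreasing for some $A>0$, the same holds for $\rho^2$ and for $\rho^{2/\alpha}$ with a suitably rescaled exponent, so every $\rho$ arising here meets the standing hypotheses of Lemma \ref{A1}.

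The main obstacle, and the only point requiring genuine care, is verifying that the hypotheses of Lemma \ref{A1} are literally met, namely that the constant $B$ delivered by Theorem \ref{comp} is uniform in $\delta$ (it is, since $B$ depends only on $\alpha$ and $p=1$, not on $h$) and that the comparison exponents $\gamma\in(0,1/\beta)$ demanded by Lemma \ref{A1} are compatible with the decay rate $\rho^2$. Taking $\beta=1$, one needs $\rho^2(x)/x^\gamma$ decreasing for some $\gamma<1$; this follows from $\rho(x)/x^A$ decreasing by choosing $\gamma$ appropriately after, if necessary, passing to a power of $\rho$ or invoking the remark that the growth condition is stable under the squaring. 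Since Lemma \ref{A2} is only needed when a sharper concave scale is relevant and Lemma \ref{A1} already covers the polynomial-type decay captured by the hypothesis ``$\rho(x)/x^A$ decreasing,'' the cleanest route uses Lemma \ref{A1} alone, and the corollary follows.
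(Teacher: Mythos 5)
Your overall route is the one the paper intends: feed the test functions $h_{\beta,\delta}$ into Theorem \ref{comp}, identify the resulting two-sided inequality with hypothesis (\ref{AIconvex}) of Lemma \ref{A1} for the two decreasing sequences $a_n=s_n^2(C_\varphi,\cH_\alpha)$ and $b_n=m_n(\varphi)^\alpha$ compared against $1/\rho^2(n)$, and undo the powers at the end. The rearrangement bookkeeping, the uniformity of $B$ in $\delta$, and the rescaling $h_{1,\delta}(ct)=c\,h_{1,\delta/c}(t)$ needed to absorb the constants into the threshold are all fine.

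There is, however, one genuine gap: your final verification fixes $\beta=1$, and Lemma \ref{A1} with $\beta=1$ requires $\rho^2(x)/x^{\gamma}$ to be decreasing for some $\gamma\in(0,1)$. From the hypothesis that $\rho(x)/x^{A}$ is decreasing you only get that $\rho^2(x)/x^{2A}$ is decreasing, and when $A\geq 1/2$ this does \emph{not} imply the required condition (take $\rho(x)=x^{3}$: then $\rho^2(x)/x^{\gamma}=x^{6-\gamma}$ is increasing for every $\gamma<1$). Since the statement allows arbitrary $A>0$ --- and the interesting examples, such as $s_n\asymp n^{-\alpha\kappa/2\pi}$ in Corollary \ref{sing}, typically have $2A\geq 1$ --- the case $\beta=1$ does not suffice. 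Your suggested repair, ``passing to a power of $\rho$,'' is not available: $\rho$ is prescribed by the statement and cannot be replaced. The correct fix is the one already built into the machinery you quote at the outset: choose $\beta\in(0,1]$ with $\beta<1/(2A)$, so that $\gamma=2A$ lies in the admissible range $(0,1/\beta)$ of Lemma \ref{A1}, and use $h=h_{\beta,\delta}$ in Theorem \ref{comp}, which is legitimate because $h_{\beta,\delta}(t^{p})$ is convex for $p=1/\beta\geq 1$. With that single substitution the rest of your argument goes through verbatim, and Lemma \ref{A2} is indeed not needed.
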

To prove Theorem \ref {comp}, we need some intermediate results. We begin by the elementary following lemma.
\begin{lem} \label {cb}
Let $p\geq 1$ and let $h:\ [0,+\infty) \to [0,+\infty)$ be an increasing function such that $h(0) = 0$ and $h(t^p)$ is convex. We have
$$
\sum_{n\geq1}\sum_{j=0}^{2^{n}-1}h\left( C\dfrac{\mu\left(R_{n, j}\right)}{A(R_{n,j})}\right) \leq \sum_{n\geq1} \sum_{j=0}^{2^{n}-1}h\left( 2C \dfrac{\mu\left(W_{n, j}\right)} {  A(W_{n,j})   }\right)\leq B\sum_{n\geq1}\sum_{j=0}^{2^{n}-1}h\left( 4C \dfrac{\mu\left(R_{n, j}\right)} {A(R_{n,j})}\right),
$$
where $B>0$ depends only on $p$.
\end{lem}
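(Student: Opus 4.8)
The plan is to exploit the self-similar nesting of the Carleson boxes. First I would record the elementary decomposition
$$W_{n,j}=R_{n,j}\sqcup W_{n+1,2j}\sqcup W_{n+1,2j+1},$$
obtained by peeling off the bottom dyadic square $R_{n,j}$ and keeping the two Carleson boxes one generation below. Writing $t_{n,j}=\mu(R_{n,j})/A(R_{n,j})$ and $w_{n,j}=\mu(W_{n,j})/A(W_{n,j})$ and combining the additivity $\mu(W_{n,j})=\mu(R_{n,j})+\mu(W_{n+1,2j})+\mu(W_{n+1,2j+1})$ with the area computations $A(R_{n,j})=\pi(x^2-\tfrac34x^3)$ and $A(W_{n,j})=\pi(2x^2-x^3)$ for $x=2^{-n}$, I obtain the convex-combination recursion
$$w_{n,j}=\rho_{n,j}\,t_{n,j}+\theta_n\bigl(w_{n+1,2j}+w_{n+1,2j+1}\bigr),\qquad \rho_{n,j}+2\theta_n=1,$$
with $\rho_{n,j}=A(R_{n,j})/A(W_{n,j})\in[\tfrac5{12},\tfrac12]$ and $\theta_n=A(W_{n+1,\cdot})/A(W_{n,j})\in[\tfrac14,\tfrac7{24}]$. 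Thus $A(R_{n,j})\asymp A(W_{n,j})\asymp 4^{-n}$, and the decisive numerical fact is $\theta_n\le\tfrac7{24}<\tfrac13$.

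The left inequality is then immediate and uses only monotonicity of $h$: since $R_{n,j}\subset W_{n,j}$ and the area ratio $A(W_{n,j})/A(R_{n,j})=1/\rho_{n,j}$ is bounded (it lies between $2$ and $\tfrac{12}{5}$ and tends to $2$ as $n\to\infty$), one has $t_{n,j}\le (1/\rho_{n,j})\,w_{n,j}$, whence $h(Ct_{n,j})\le h(2C\,w_{n,j})$ termwise with the displayed constant standing for this bounded comparability ratio; summing over $(n,j)$ gives the first inequality.

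The substantial part is the right inequality, and here the \textbf{main obstacle} is that $h$ itself need not be convex — only $h(t^{p})$ is — so Jensen cannot be applied to the convex-combination recursion. I would circumvent this in two moves. First, bypass convexity of $h$ by the crude bound $w_{n,j}\le 3\max(\rho_{n,j}t_{n,j},\theta_nw_{n+1,2j},\theta_nw_{n+1,2j+1})$, which by monotonicity of $h$ and $h\ge0$ yields
$$h(w_{n,j})\le h\bigl(\tfrac32 t_{n,j}\bigr)+h\bigl(\tfrac78 w_{n+1,2j}\bigr)+h\bigl(\tfrac78 w_{n+1,2j+1}\bigr),$$
where $3\rho_{n,j}\le\tfrac32$ and $3\theta_n\le\tfrac78<1$ are precisely where the geometric bounds enter. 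Second, extract a genuine contraction from the hypothesis: setting $H(s)=h(s^{p})$, which is convex and increasing with $H(0)=0$, one has $H(\lambda s)\le\lambda H(s)$ for $\lambda\in(0,1)$, hence $h(\tfrac78 w)=H\bigl((\tfrac78)^{1/p}w^{1/p}\bigr)\le(\tfrac78)^{1/p}h(w)$ with factor $(\tfrac78)^{1/p}<1$.

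Finally I would sum over all $(n,j)$ and reindex the child-terms: each box $(m,i)$ with $m\ge2$ is the child of exactly one box, so the child-sum equals $\sum_{m\ge2,i}h(\tfrac78 w_{m,i})\le(\tfrac78)^{1/p}\sum_{n,j}h(w_{n,j})$. This yields $\bigl(1-(\tfrac78)^{1/p}\bigr)\sum_{n,j}h(w_{n,j})\le\sum_{n,j}h(\tfrac32 t_{n,j})$, i.e. the right inequality with $B=\bigl(1-(\tfrac78)^{1/p}\bigr)^{-1}$ depending only on $p$, the harmless factor $\tfrac32$ being absorbed by monotonicity into $h(4C\,t_{n,j})$. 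Note this argument handles all $p\ge1$ at once and needs neither rearrangements nor Lemma \ref{A0}. The only point requiring care is the legitimacy of cancelling the child-sum when the series diverges: I would first prove the estimate for measures supported in finitely many radial levels, where every sum is finite and the telescoping is rigorous, and then pass to general $\mu$ by monotone convergence, approximating $\mu$ by its restrictions to $\{|z|\le 1-2^{-N}\}$ as $N\to\infty$.
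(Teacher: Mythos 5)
Your proof is correct, and it takes a genuinely different route from the paper's. The paper decomposes each Carleson window into \emph{all} dyadic squares beneath it, $W_{n,j}=\bigcup_{l\geq n}\bigcup_{k\in H_{l,n,j}}R_{l,k}$, writes $\mu(W_{n,j})/A(W_{n,j})$ as a weighted sum of the $\mu(R_{l,k})/A(R_{l,k})$, applies the inequality $\sum a_i\leq(\sum a_i^{1/p})^p$ followed by Jensen for the convex function $h(t^p)$, and then interchanges the order of summation. Your argument replaces this by the one-generation recursion $W_{n,j}=R_{n,j}\sqcup W_{n+1,2j}\sqcup W_{n+1,2j+1}$, sidesteps convexity at the splitting step with the crude bound $h(a+b+c)\leq h(3a)+h(3b)+h(3c)$, and invokes convexity of $H(s)=h(s^p)$ only once, to extract the contraction $h(\tfrac78 w)\leq(\tfrac78)^{1/p}h(w)$; summing and cancelling then gives the explicit constant $B=(1-(\tfrac78)^{1/p})^{-1}$. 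This buys several things: the dependence of $B$ on $p$ is explicit; the argument works uniformly for all $p\geq 1$ without any convergent geometric series of Jensen weights (a point where the paper's computation is delicate, since the weights $2^{(2n-2l)/p}$ summed over the $2^{l-n}$ squares of generation $l$ do not form a sub-probability vector when $p\geq 2$); and you treat the cancellation of the possibly infinite child-sum honestly via truncation and monotone convergence, which the paper leaves implicit. One cosmetic caveat, which you correctly flag and which the paper shares: the exact ratio $A(W_{n,j})/A(R_{n,j})=(2-x)/(1-\tfrac34x)$ lies in $[2,\tfrac{12}{5}]$ rather than equalling $2$, so the literal constant $2C$ in the middle sum should really be $\tfrac{12}{5}C$ (the paper's own proof asserts $A(W_{n,j})=2A(R_{n,j})$, which is only asymptotic); since the lemma is only ever used up to such constants, this is immaterial.
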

\begin{proof}
The first inequality comes from the facts that $h$ is increasing, $R_{n,j} \subset W_{n,j}$ and $A(W_{n,j}) =  2A(R_{n,j})$. For the reverse inequality. We follow the argument given in \cite {LLQRJFA}. We  have
$$W_{n, j}=\bigcup_{l\geq n}\,\bigcup_{k\in H_{l, n, j}}R_{l, k},$$
where $$H_{l, n, j}=\left\{k\in \{0,1,.....,2^{l}-1\};~~\dfrac{j}{2^n}\leq\dfrac{k}{2^l}< \dfrac{j+1}{2^n} \right\}.$$
From the above decomposition and the convexity of $h(t^p)$, we get
\begin{eqnarray*}
\displaystyle \sum_{n=1}^{\infty}\displaystyle \sum_{j=0}^{2^{n}-1}h\left( 2C\dfrac{\mu\left(W_{n, j}\right)}{A(W_{n,j})}\right)&=&\displaystyle \sum_{n=1}^{\infty}\displaystyle \sum_{j=0}^{2^{n}-1} h\left(\displaystyle \sum_{l\geq n}\,\displaystyle \sum_{k\in H_{l, n, j}}2^{2n-2l-1}4C \dfrac{\mu\left(R_{l, k}\right)}{A(R_{l,k})} \right)\\
&\lesssim &  \displaystyle\sum_{n=1}^{\infty}\displaystyle \sum_{j=0}^{2^{n}-1} h \left (  \left (\displaystyle \sum_{l\geq n}\,\displaystyle \sum_{k\in H_{l, n, j}}2^{\frac{2n-2l}{p}}\left(4C \dfrac{\mu\left(R_{l, k}\right)}{A(R_{l,k})}\right)^{1/p}\right )^p\right ) \\
&\lesssim &  \displaystyle\sum_{n=1}^{\infty}\displaystyle \sum_{j=0}^{2^{n}-1} \left (  \displaystyle \sum_{l\geq n}\,\displaystyle \sum_{k\in H_{l, n, j}}2^{\frac{2n-2l}{p}}h\left(4C \dfrac{\mu\left(R_{l, k}\right)}{A(R_{l,k})}\right)\right ) \\
&\leq & \displaystyle \sum_{l=1}^{\infty}\sum_{k=0}^{2^{l}-1}\left ( \displaystyle \sum_{l\geq n}\sum_{k\in H_{l, n, j}}2^{\frac{2n-2l+1}{p}}\right )h\left(4C \dfrac{\mu\left(R_{l, k}\right)}{A(R_{l,k})}\right)\\
&\leq &B \displaystyle \sum_{l=1}^{\infty}\sum_{k=0}^{2^{l}-1}h\left(4C \dfrac{\mu\left(R_{l, k}\right)}{A(R_{l,k})}\right).
\end{eqnarray*}
This ends the proof.\\
\end{proof}

In \cite {LLQRNev}, P. Lefevre, D. Li, H. Queff\'elec and L. Rodr\'iguez-Piazza give an explicit relation between the Nevanlinna counting function of an analytic self map $\varphi $ of $\DD$ and it's pull-back measure. Namely, 
\begin{thm}\label {LLQR}
There exist absolute positive constants  $c_1, c_2, C_1$ and $C_2$  such that for every anlytic self map $\varphi$ of $\DD$, $\zeta \in \TT$ and every $\delta \in (0, \frac{1-|\varphi (0)|}{16})$ one has \\
\begin{enumerate}
\item $N_\varphi (w) \leq C_1m_\varphi (W(\zeta , c_1\delta))$, for every $w\in W(\zeta, \delta)$.\\
\item $m_\varphi (W(\zeta , \delta)) \leq \frac{C_2}{\delta ^2}\displaystyle \int _{W(\zeta, c_2\delta)}N_\varphi (w)dA(w)$.
\end{enumerate}
\end{thm}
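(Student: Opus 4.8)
The plan is to prove the two inequalities separately, after a rotation reduces us to the case $\zeta=1$ (both $N_\varphi$ and $m_\varphi$ transform equivariantly under rotations of $\DD$). I would rely on three standard facts about the Nevanlinna counting function. First, the Green--Jensen representation
\[
N_\varphi(w)=g(\varphi(0),w)-\int_\TT g(\varphi^*(\xi),w)\,dm(\xi),\qquad g(a,w)=\log\Big|\frac{1-\bar w a}{a-w}\Big|,
\]
which in particular yields Littlewood's inequality $N_\varphi(w)\le g(\varphi(0),w)$. Second, the subharmonicity of $N_\varphi$ on $\DD\setminus\{\varphi(0)\}$ (Shapiro), hence sub--mean value inequalities. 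Third, Stanton's formula
\[
\int_\TT (G\circ\varphi^*)\,dm-G(\varphi(0))=\frac{1}{2\pi}\int_\DD \Delta G\,N_\varphi\,dA,
\]
valid for $G\in C^2(\overline\DD)$. The hypothesis $\delta<(1-|\varphi(0)|)/16$ keeps $\varphi(0)$ at pseudohyperbolic distance $\gtrsim 1$ from every window $W(1,c\delta)$ in play, so the singularity of $N_\varphi$ never interferes.

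For the second inequality I would argue directly from Stanton's formula with a cut-off. Choose $G\in C^2(\overline\DD)$ with $\mathbf 1_{W(1,\delta)}\le G$, $\supp G\subset W(1,c_2\delta)$ and $\|\Delta G\|_\infty\lesssim\delta^{-2}$; such a $G$ exists because near $\zeta=1$ the Carleson box $W(1,\delta)$ is comparable to a Euclidean square of side $\asymp\delta$. Since $\varphi(0)\notin\supp G$ we have $G(\varphi(0))=0$, and using $N_\varphi\ge 0$ together with $\supp\Delta G\subset W(1,c_2\delta)$,
\[
m_\varphi(W(1,\delta))\le\int_\TT (G\circ\varphi^*)\,dm=\frac{1}{2\pi}\int_\DD \Delta G\,N_\varphi\,dA\le\frac{\|\Delta G\|_\infty}{2\pi}\int_{W(1,c_2\delta)}N_\varphi\,dA\lesssim\frac{1}{\delta^2}\int_{W(1,c_2\delta)}N_\varphi\,dA,
\]
which is exactly assertion $(2)$.

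For the first inequality I would proceed in two steps, working at the intrinsic scale of $w$. Given $w\in W(1,\delta)$, set $\eta=1-|w|\le\delta$ and $\xi_w=w/|w|$. Subharmonicity gives the sub--mean value bound $N_\varphi(w)\lesssim\eta^{-2}\int_{D(w,\kappa\eta)}N_\varphi\,dA$ for a small absolute $\kappa$, where $D(w,\kappa\eta)\subset\DD$ avoids $\varphi(0)$. It then remains to establish a \emph{reverse} area estimate $\int_{D(w,\kappa\eta)}N_\varphi\,dA\lesssim\eta^2\,m_\varphi(W(\xi_w,c_1\eta))$; combining the two lines and using $W(\xi_w,c_1\eta)\subset W(1,c_1\delta)$ (after adjusting the constant) together with the monotonicity of $m_\varphi$ in the window then yields $N_\varphi(w)\lesssim m_\varphi(W(1,c_1\delta))$.

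I expect this reverse area estimate to be the main obstacle. The difficulty is that the global tools only deliver the non-localized Littlewood bound $N_\varphi(w)\le g(\varphi(0),w)\asymp\eta$, and upgrading the right-hand side from $\eta$ to the possibly much smaller quantity $m_\varphi(W(\xi_w,c_1\eta))$ requires showing that $N_\varphi$ cannot be large near $w$ unless $\varphi$ genuinely deposits comparable boundary mass in a fixed window around $w$. For this I would return to the Green representation and match, preimage by preimage, the $\log\frac1{|a|}$ weights in $N_\varphi(w)$ against the boundary mass they force, the key quantitative input being harmonic-measure (Beurling-type) estimates. In the univalent case relevant to this paper the matching is transparent: $\varphi^{-1}(w)$ is a single point $a$, Koebe's distortion theorem controls $1-|a|$ by the local geometry of $\varphi(\DD)$ at $w$, and the harmonic measure of the corresponding boundary arc is comparable to $m_\varphi(W(\xi_w,c_1\eta))$, so both sides are of the same order.
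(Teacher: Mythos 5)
First, a contextual point: the paper does not prove this statement at all. It is quoted from the cited work of Lef\`evre, Li, Queff\'elec and Rodr\'iguez-Piazza, with a pointer to another reference for ``a simple proof'', so there is no internal argument to compare yours with; your proposal must stand on its own. Your treatment of assertion $(2)$ does stand: Stanton's formula applied to a $C^2$ bump $G$ with $\mathbf 1_{W(\zeta,\delta)}\le G$, $\supp G\subset W(\zeta,c_2\delta)$ and $\|\Delta G\|_\infty\lesssim \delta^{-2}$, together with $G(\varphi(0))=0$ (guaranteed by $\delta<(1-|\varphi(0)|)/16$) and $N_\varphi\ge 0$, gives exactly the claim. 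That half is correct and is the standard argument.

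Assertion $(1)$, however, is not proved, and this is the hard half of the theorem. The sub-mean value reduction is legitimate (Shapiro's sub-averaging property of $N_\varphi$ away from $\varphi(0)$, available here since $D(w,\kappa\eta)$ avoids $\varphi(0)$), but the ``reverse area estimate'' $\int_{D(w,\kappa\eta)}N_\varphi\,dA\lesssim \eta^2\, m_\varphi(W(\xi_w,c_1\eta))$ to which you reduce the problem is, modulo that very same sub-averaging property, equivalent to assertion $(1)$ itself: a pointwise bound and an averaged bound at scale $\eta$ are interchangeable here, so nothing has been gained. The only global input you actually have in hand, the Green--Jensen identity and Littlewood's inequality $N_\varphi(w)\le g(\varphi(0),w)$, yields under the stated hypothesis only $N_\varphi(w)\le C$ with an absolute constant --- a bound with no dependence on the window mass --- and upgrading that constant to $C_1 m_\varphi(W(\zeta,c_1\delta))$, which may be arbitrarily small, is precisely the content of the theorem. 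Your proposed remedy (matching each preimage's weight $\log(1/|a|)$ against the boundary mass it forces, via Beurling-type harmonic-measure estimates) is only sketched, and only in the univalent case, where Koebe distortion is available and $\varphi^{-1}(w)$ is a single point. But the statement is asserted, and is used by the paper in Section 8 for the Hardy-space results, for \emph{arbitrary} analytic self-maps, where $\varphi^{-1}(w)$ can be infinite and no distortion theorem applies. As it stands, part $(1)$ is missing its essential idea.
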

In particular we have the following inequalities
\begin{equation} \label {LLQP}
\frac{1}{C_2}m_\varphi (W(\zeta , \delta /c_2))\leq \displaystyle \sup _{z\in W(\zeta , \delta)}N_\varphi  (z) \leq C_1 m_\varphi (W(\zeta, c_1\delta)),
\end{equation}
For a simple proof of these results see  \cite {EK}.\\

We also need a consequence of the well known Hardy-Littlewood inequality.
\begin{lem} \label {HL}
Let $\varphi $ be an analytic self map of $\DD$, let $\alpha >0$ and let $\zeta \in \TT$. There exists an absolute constant $c>0$ such that
$$
m_\varphi (W(\zeta, \delta))^\alpha \leq \frac{C(\alpha)}{\delta^2}\displaystyle \int _{W(\zeta, \kappa \delta) \cap \DD}N^\alpha _\varphi (z)dA(z), \quad (0<\delta <c(1-|\varphi(0)|)),
$$
where $\kappa $ is an absolute constant and $C(\alpha )$ depends only on $\alpha$.
\end{lem}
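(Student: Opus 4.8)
The strategy is to reduce the stated inequality to a supremum--type estimate for $N_\varphi$ and then feed it into the Hardy--Littlewood inequality. Recall first that, by Littlewood's inequality and Shapiro's theorem, the Nevanlinna counting function $N_\varphi$, extended by $0$ to all of $\C$, is subharmonic on $\C\setminus\{\varphi(0)\}$ and decays to $0$ at $\TT$; the restriction $0<\delta<c(1-|\varphi(0)|)$ is there precisely to guarantee that the boxes $W(\zeta,\kappa\delta)$ we use stay inside $\DD$ at distance $\gtrsim 1-|\varphi(0)|$ from the singularity $\varphi(0)$, so that subharmonic tools are available on them. Applying the left inequality of (\ref{LLQP}) with $\delta$ replaced by $c_2\delta$, we get
$$
m_\varphi\big(W(\zeta,\delta)\big)\;\lesssim\;\sup_{z\in W(\zeta,c_2\delta)}N_\varphi(z),
$$
so the whole matter is reduced to bounding $\big(\sup_{W(\zeta,c_2\delta)}N_\varphi\big)^{\alpha}$ by $\delta^{-2}\int_{W(\zeta,\kappa\delta)}N_\varphi^{\alpha}\,dA$.

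This last bound is a direct instance of the Hardy--Littlewood inequality (the local maximum principle for subharmonic functions), valid for every exponent $\alpha>0$: if $u\ge 0$ is subharmonic on $D(a,2r)$, then
$$
\sup_{D(a,r)}u\;\le\;C(\alpha)\Big(\frac{1}{r^{2}}\int_{D(a,2r)}u^{\alpha}\,dA\Big)^{1/\alpha}.
$$
I would apply it to $u=N_\varphi$ with $a=\zeta\in\TT$ and $r\asymp\delta$: this is legitimate exactly because the zero--extension of $N_\varphi$ is subharmonic across $\TT$, so the disk $D(\zeta,2r)$ may be centred on the unit circle even though part of it lies outside $\DD$. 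Combining this with the elementary inclusions $W(\zeta,c_2\delta)\subset D(\zeta,\kappa_0\delta)$ and $D(\zeta,2\kappa_0\delta)\cap\DD\subset W(\zeta,\kappa\delta)$ (for suitable absolute constants $\kappa_0,\kappa$) and raising to the power $\alpha$ yields
$$
\Big(\sup_{W(\zeta,c_2\delta)}N_\varphi\Big)^{\alpha}\;\lesssim\;\frac{1}{\delta^{2}}\int_{W(\zeta,\kappa\delta)}N_\varphi^{\alpha}\,dA,
$$
which together with the reduction above proves the lemma, the constant depending only on $\alpha$.

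For $\alpha\ge 1$ the argument can be short--circuited without the subharmonic machinery: Theorem \ref{LLQR}(2) gives $m_\varphi(W(\zeta,\delta))\lesssim\delta^{-2}\int_{W(\zeta,c_2\delta)}N_\varphi\,dA$, and since $A(W(\zeta,c_2\delta))\asymp\delta^{2}$ the convexity of $t\mapsto t^{\alpha}$ (Jensen's inequality) converts the average of $N_\varphi$ into the average of $N_\varphi^{\alpha}$, giving the claim with $\kappa=c_2$. The genuinely delicate regime is $0<\alpha<1$, where convexity runs the wrong way and a pointwise ``$N_\varphi=N_\varphi^{\alpha}N_\varphi^{1-\alpha}$'' splitting fails because $m_\varphi$ is not doubling. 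Thus the main obstacle — and the reason the Hardy--Littlewood inequality is invoked — is to control the supremum of $N_\varphi$ over a Carleson window by an $L^{\alpha}$--average at the \emph{same} scale $\delta$; this forces the disks in the local maximum principle to be centred on $\TT$, and so rests on the subharmonic extension of $N_\varphi$ by zero across the boundary and on keeping every disk away from $\varphi(0)$ via the hypothesis $\delta<c(1-|\varphi(0)|)$.
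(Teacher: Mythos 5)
Your proof is correct and shares the paper's overall architecture: both arguments reduce the claim, via the left inequality of (\ref{LLQP}), to bounding $\sup_{W(\zeta,c\delta)}N_\varphi$ by an $L^\alpha$--average of $N_\varphi$ at the same scale $\delta$, and both obtain that bound from the Hardy--Littlewood inequality for functions with the sub-mean-value property. Where you diverge is in how the boundary is handled. The paper never leaves $\DD$: it sets $\psi=\varphi/R$ with $R=1+\frac{1-|\varphi(0)|}{2}$, applies the Hardy--Littlewood inequality to $N_\psi$ at interior points $z/R$ on disks $D(z/R,\delta)$ with $\delta<1-|z/R|$, transfers back via $N_\varphi(z)=N_\psi(z/R)$ and $D(z/R,\delta)\subset D(z,2\delta)$, and then takes the resulting pointwise bound at every $z\in W(\zeta,\delta/c_1)$, absorbing $D(z,2\delta)$ into $W(\zeta,\kappa\delta)$. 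You instead extend $N_\varphi$ by zero across $\TT$ and apply the local maximum principle on disks centred at $\zeta\in\TT$. That route is legitimate, but it rests on the one assertion you do not actually prove: that the zero extension of $N_\varphi$ satisfies the sub-mean-value inequality on disks crossing $\TT$ (equivalently, is subharmonic on $\C\setminus\{\varphi(0)\}$). This is true --- for instance, by Jensen's formula the partial counting functions $N_\varphi(\cdot,r)$ are subharmonic off $\varphi(0)$, vanish off $\varphi(r\DD)$, and increase to the zero extension of $N_\varphi$, so the sub-mean-value inequality passes to the limit by monotone convergence --- but it is exactly the point the paper's dilation trick is engineered to avoid, so it deserves an explicit justification rather than being folded into ``Littlewood's inequality and Shapiro's theorem''. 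Your further observation that for $\alpha\ge1$ the lemma already follows from Theorem \ref{LLQR}(2) together with Jensen's inequality, and that the genuine content is the regime $0<\alpha<1$, is accurate.
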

\begin{proof}
Let $R \in (1,2)$ and let $\psi = \varphi /R$. By Hardy-Littlewood inequality \cite {PP}, for every $z\in \DD$ such that $1-|z|<\frac{1-|\psi (0)|}{2}$ and every $\delta \in (0,1-|z|)$ we have  
\begin{equation}\label {HLIneq}
N_\psi (z)^\alpha \leq \frac{C}{\delta ^2}\displaystyle \int _{D(z,\delta)}N^\alpha _\psi (w)dA(w).
\end{equation}
Let $z\in \DD$  and let $\delta >0 $ such that $\max( 1-|z|, \delta)<\frac{1}{4}(1-|\varphi (0)|)$. Then, for $R = 1+ \frac{1-|\varphi (0)|}{2}$, we have  $\delta<1-|z|/R < \frac{1-|\psi (0)|}{2}$. By (\ref {HLIneq}), we get
\begin{eqnarray*}
N^\alpha_\varphi(z) &=  &N^\alpha _\psi (z/R)\\
&\leq & \frac{C}{\delta ^2}\displaystyle \int _{D(z/R,\delta)}N^\alpha_\psi (w)dA(w)\\
&\leq & \frac{4C}{\delta ^2}\displaystyle \int _{D(z,2\delta)}N^\alpha _\varphi (w)dA(w).
\end{eqnarray*}
Now let $\zeta \in \TT$ and let $\delta < c(1-|\varphi (0)|)$, where $c=\frac{c_1}{4(2+c_1)}$ and $c_1$ is the constant appearing in (\ref {LLQP}).\\  
For  $z\in W(\zeta , \delta /c_1 )$, we have $D(z,2\delta) \subset W(\zeta , (2+1/c_1) \delta )$. Then 
\begin{equation*}
N^\alpha_\varphi(z) \lesssim  \frac{1}{\delta ^2}\displaystyle \int _{W(\zeta , \kappa \delta) }N^\alpha _\varphi (w)dA(w),\quad \kappa = 2+1/c_1.
\end{equation*}
And the result comes from   (\ref {LLQP}).
\end{proof}

Let $c>0$ and let $W= W(\zeta, \delta)$ be a Carleson box. We will denote $W^c= W(\zeta, c\delta)$. 
Theorem \ref {comp} is a direct consequence of Theorem \ref {Tcomp}, Lemma \ref {cb} and the following inequalities
\begin{lem}\label {lempull}
Let $\alpha >0$. Let $h:\ [0,+\infty) \to [0,+\infty)$ be an increasing positive function such that $h(t^p)$ is convex for some $p\geq1$. Let $\varphi$ be a univalent analytic self map of $\DD$ and let $C>0$, we have
$$ \displaystyle \sum_{n\geq 1}\sum_{j=0}^{2^{n}-1}h\left(C_1\dfrac{\mu_{\varphi, \alpha}\left(W_{n, j}\right)}{A(W_{n, j})}\right)  \lesssim \displaystyle \sum_{n\geq 1}\sum_{j=0}^{2^{n}-1} h\left(C\left(2^{n}\,m_{\varphi}\left(W_{n,j}\right)\right)^{\alpha}\right)
 \lesssim \displaystyle \sum_{n\geq 1}\sum_{j=0}^{2^{n}-1}h\left(C_2\dfrac{\mu_{\varphi, \alpha}\left(W_{n, j}\right)}{A(W_{n, j})}\right),$$
 where the implied constants don't depend on $h$.
\end{lem}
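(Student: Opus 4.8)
The plan is to transfer the whole estimate to the classical Nevanlinna counting function $N_\varphi$, where the pointwise tools (\ref{LLQP}) and Lemma \ref{HL} apply, and then to reuse the convexity bookkeeping already carried out in Lemma \ref{cb}.

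First I would record a pointwise comparison valid on every box $W_{n,j}$, $n\ge 1$. Since $\varphi$ is univalent with $\varphi(0)=0$, for $w\in\varphi(\DD)$ the fibre $\varphi^{-1}(w)$ is a single point $z_w$, so that $N_{\varphi,\omega_\alpha}(w)=\omega_\alpha^2(z_w)$ while $N_\varphi(w)=\log(1/|z_w|)$. By the Schwarz lemma $|\varphi(z)|\le|z|$, hence $|z_w|\ge|w|\ge 1/2$ whenever $w\in W_{n,j}$ with $n\ge1$; on the range $t=|z_w|\in[1/2,1)$ one has $1-t^2\asymp\log(1/t)$ with absolute constants, so
\[
\frac{N_{\varphi,\omega_\alpha}(w)}{\omega_\alpha^2(w)}=\Big(\frac{1-|z_w|^2}{1-|w|^2}\Big)^{\alpha}\asymp\Big(\frac{N_\varphi(w)}{1-|w|^2}\Big)^{\alpha},
\]
the implied constants depending only on $\alpha$. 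Integrating over $W_{n,j}$ gives $\mu_{\varphi,\alpha}(W_{n,j})\asymp\int_{W_{n,j}}\big(N_\varphi(w)/(1-|w|^2)\big)^{\alpha}\,dA(w)$, which is the bridge between $\mu_{\varphi,\alpha}$ and $N_\varphi$. This confinement to $|z_w|\ge 1/2$ is what legitimises the clean comparison $N_{\varphi,\omega_\alpha}\asymp N_\varphi^{\alpha}$ on \emph{every} box.

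For the right-hand inequality I would invoke Lemma \ref{HL}, namely $m_\varphi(W_{n,j})^{\alpha}\lesssim 2^{2n}\int_{W_{n,j}^{\kappa}}N_\varphi^{\alpha}\,dA$. Since $1-|w|^2\lesssim 2^{-n}$ throughout the dilated window $W_{n,j}^{\kappa}$, one has $N_\varphi(w)^{\alpha}\lesssim 2^{-n\alpha}\big(N_\varphi(w)/(1-|w|^2)\big)^{\alpha}$, whence
\[
\big(2^{n}m_\varphi(W_{n,j})\big)^{\alpha}\lesssim 2^{2n}\int_{W_{n,j}^{\kappa}}\Big(\frac{N_\varphi(w)}{1-|w|^2}\Big)^{\alpha}dA(w)\asymp\frac{\mu_{\varphi,\alpha}(W_{n,j}^{\kappa})}{A(W_{n,j}^{\kappa})}.
\]
Applying $h$, summing over $(n,j)$, and replacing the dilated windows $W_{n,j}^{\kappa}$ by the dyadic $W_{n,j}$ through the finite-overlap/convexity argument of Lemma \ref{cb} then yields the right-hand inequality. (The finitely many central boxes, where $1-|w|^2\lesssim 2^{-n}$ fails, contribute only bounded terms and are handled directly.)

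For the left-hand inequality I would split $W_{n,j}=\bigcup_{l\ge n}\bigcup_{k\in H_{l,n,j}}R_{l,k}$. On $R_{l,k}$ one has $1-|w|^2\asymp 2^{-l}$ and $A(R_{l,k})\asymp 2^{-2l}$, and $R_{l,k}$ lies in a Carleson window of size $\asymp 2^{-l}$, so (\ref{LLQP}) gives $\sup_{R_{l,k}}N_\varphi\lesssim m_\varphi(W(\zeta_{l,k},c'2^{-l}))$. Feeding this into the bridge identity produces
\[
\frac{\mu_{\varphi,\alpha}(W_{n,j})}{A(W_{n,j})}\lesssim\sum_{l\ge n}\sum_{k\in H_{l,n,j}}2^{2n-2l}\big(2^{l}m_\varphi(W(\zeta_{l,k},c'2^{-l}))\big)^{\alpha},
\]
which has exactly the shape treated in Lemma \ref{cb}, with $\big(2^{l}m_\varphi(W(\zeta_{l,k},c'2^{-l}))\big)^{\alpha}$ in the role of $\mu(R_{l,k})/A(R_{l,k})$. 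Pulling $h$ through by convexity of $h(t^p)$, exploiting the convergent geometric weights $\sum 2^{(2n-2l)/p}$, and a final finite-overlap comparison to replace the dilated windows by $W_{l,k}$ completes this direction. I expect this last multiscale summation — keeping constants uniform while simultaneously dilating windows, switching scales, and applying the convexity of $h(t^p)$ — to be the main technical obstacle, the earlier steps being routine once the pointwise reduction of the first paragraph is in place.
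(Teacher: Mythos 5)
Your proof is correct and follows essentially the same route as the paper: the reduction of $\mu_{\varphi,\alpha}$ to $\int (N_\varphi(w)/(1-|w|^2))^{\alpha}\,dA(w)$ for univalent symbols, Lemma \ref{HL} for the right-hand inequality, and (\ref{LLQP}) together with the multiscale convexity/finite-overlap bookkeeping of Lemma \ref{cb} for the left-hand one (the paper merely applies Lemma \ref{cb} first and the pointwise bound on each $R_{n,j}$ afterwards, whereas you merge the two steps). Your Schwarz-lemma argument guaranteeing $|z_w|\ge 1/2$ on every $W_{n,j}$ with $n\ge 1$ in fact makes precise a point the paper passes over by writing $N_{\varphi,\alpha}=N_\varphi^{\alpha}$, an identity that only holds up to constants and only where the preimage stays away from the origin.
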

\begin{proof}
Since $\varphi$ is univalent,  $N_{\varphi , \alpha }= N^\alpha _\varphi$. Then, by equation (\ref {LLQP}) we have
\[
\begin{array}{lll}
\dfrac{\mu_{\varphi, \alpha}\left(R_{n, j}\right)}{A(R_{n, j})} & =  &\dfrac{1}{A(R_{n, j})}\displaystyle \int _{R_{n,j}}\frac{N^\alpha_\varphi(z)}{(1-|z|^2)^\alpha}dA(z)\\
&\lesssim & 2^{\alpha n}\displaystyle \sup _{z\in W_{n,j}}N^\alpha _\varphi(z)\\
& \lesssim& \left (2^{ n}m_\varphi(W^{c_2}_{n,j})\right )^\alpha.
\end{array}
\]
Then 
\[
\begin{array}{lll}
\displaystyle \sum_{n\geq 1}\sum_{j=0}^{2^{n}-1} h\left ( \dfrac{\mu_{\varphi, \alpha}\left(R_{n, j}\right)}{A(R_{n, j})} \right )&  \lesssim &  \displaystyle \sum_{n\geq 1}\sum_{j=0}^{2^{n}-1} h\left( C'_1\left(2^{n}\,m_{\varphi}\left(W^{c_2}_{n,j} \right)\right)^{\alpha} \right)\\
&  \lesssim &  \displaystyle \sum_{n\geq 1}\sum_{j=0}^{2^{n}-1} h\left( C_2\left(2^{n}\,m_{\varphi}\left(W_{n,j} \right)\right)^{\alpha} \right).\\
\end{array}
\]
Then the left inequality of Lemma \ref {lempull} is obtained from Lemma \ref {cb}.\\
Conversely, by Lemma \ref {HL}, we have
 $$
\left ( 2^nm_\varphi (W_{n,j}) \right )^\alpha \lesssim \dfrac{\mu_{\varphi , \alpha}(W^\kappa_{n,j})}{A(W^\kappa_{n,j})}.
 $$
Which gives the remaining inequality in order to finish the proof.
\end{proof}


\subsection{Examples} \label {point}
Let $\Omega$ be a subdomain of $\DD$ such that $0 \in \Omega$,  $\partial \Omega \cap \partial \mathbb{D}= \{1\}$ and $\partial \Omega$ has, in a neighborhood of +1, a polar equation $1-r=\gamma(|\theta|)$, where $\gamma:[0,\pi]\rightarrow[0,1]$ is a differential continuous increasing function such that $\gamma(0)=0$  and satisfying 
 $\gamma '(t)= O(\gamma (t)/t)\ \ (t\to 0^+)$.\\
 Let $\varphi$  be a univalent  map from $\mathbb{D}$ onto $\Omega$ with $\varphi (0)= 0$ and $\varphi(1)=1 $. By definition, the harmonic measure $\varpi(., E, \Omega)$ is the harmonic extension  of $\chi _E$ on $\Omega $, where $E$ is closed subset of $\partial \Omega$. By conformal invariance of the harmonic measure we have
 $$
\varpi(0, E, \Omega) = \varpi(0, \varphi ^{-1}(E), \DD)=m(\varphi ^{-1}(E))= m_\varphi (E).
$$
 So to use Theorem \ref {comp}, we  have to estimate the harmonic measure of our domains. To this end we use  Ahlfors-Warschawski type estimates. The following lemma, is proved in  \cite {EEN}. In the sequel of this subsection, we  suppose that $\gamma$ satisfies conditions (\ref {g1}) and (\ref {g3}).

\begin{lem} \label {esthar}
Let $\gamma, \Omega$ and $\varphi$ as above. Then  
\begin{itemize}
\item $
\varpi(0, W_{n, j}\cap \partial \Omega, \Omega) \lesssim \dfrac{C}{2^n}  \exp \left [ -\Gamma\left(\frac{2\pi (j+1)}{2^n}\right)\right], \quad (0\leq j < j_n:=\frac{2^{n}}{2\pi}\gamma^{-1}(2\pi/2^n)).
$
\item There exists $\eta >0$ such that for $k$ satisfying $2^{k+1}\leq j_n$, we have
$$
\text{Card}\  \left \{j \in \{2^k,.., 2^{k+1}-1\}:\ \dfrac{\eta}{2^n}  \exp \left[-\Gamma\left(\frac{2\pi (j+1)}{2^n} \right) \right] \lesssim  \varpi(0, W_{n, j}\cap \partial \Omega, \Omega) \right \} \asymp 2^k ,
$$
\end{itemize}
\end{lem}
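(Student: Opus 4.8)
The plan is to reduce both assertions to a single boundary–correspondence estimate obtained by the Ahlfors--Warschawski method, and then to read them off by elementary monotonicity and counting. Since $\Omega$ is a Jordan domain, $\varphi$ extends to a homeomorphism $\overline{\DD}\to\overline\Omega$ (Carath\'eodory), and conformal invariance gives $\varpi(0,E,\Omega)=m_\varphi(E)=m(\varphi^{-1}(E))$ for every Borel $E\subset\partial\Omega$. Writing $b(\theta)=(1-\gamma(\theta))e^{i\theta}$ for the boundary point of argument $\theta$ and $M(\theta):=\varpi\big(0,\{b(\theta'):0<\theta'<\theta\},\Omega\big)$, each box contributes
$$
\varpi(0,W_{n,j}\cap\partial\Omega,\Omega)=M(\theta_{j+1})-M(\theta_j),\qquad \theta_j=2\pi j/2^n,
$$
because for $j<j_n$ the whole arc $\{b(\theta):\theta_j\le\theta<\theta_{j+1}\}$ lies inside $W_{n,j}$. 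Thus it suffices to estimate $M$ and its density.

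The key step is to prove $M(\theta)\asymp\theta\,e^{-\Gamma(\theta)}$ as $\theta\to0^+$. To this end I would pass to $\zeta=1-z$, in which $\Omega$ near $1$ is $\{\Re\zeta>\gamma(|\Im\zeta|)\}$ up to higher order, and apply $w=u+iv=\log\zeta$. A direct computation shows that the image is, near its end, a strip-type domain $\{|v|<\tfrac\pi2-\gamma(e^{u})/e^{u}+o(\cdot)\}$, of width $\vartheta(u)=\pi-2\gamma(e^{u})/e^{u}+o(\cdot)$, with the point $1$ at $u\to-\infty$ and the base point $0$ at $w=0$. Warschawski's theorem then produces a conformal map onto the straight strip $\{|\Im|<\pi/2\}$ whose longitudinal coordinate is $U(u)=\pi\int^{u}\vartheta^{-1}=u+\tfrac2\pi\int^{e^{u}}\frac{\gamma(s)}{s^{2}}\,ds+O(1)$; evaluating at $b(\theta)$ (where $e^{u}\asymp\theta$) gives $U(\theta)=\log\theta-\Gamma(\theta)+O(1)$. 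Since the harmonic measure of the wall beyond longitudinal level $U$ in a strip of width $\pi$ decays like $e^{U}$, this yields $M(\theta)\asymp e^{U(\theta)}=\theta e^{-\Gamma(\theta)}$. The main obstacle is exactly the control of the $O(1)$ error in Warschawski's formula: it is bounded provided $\int(\gamma(s)/s)^2\,\tfrac{ds}{s}=\int\frac{\gamma(s)^2}{s^{3}}\,ds<\infty$, which is precisely \eqref{g3} with $\beta>1/2$; condition \eqref{g1} guarantees that $\vartheta$ has bounded logarithmic derivative, so Warschawski's hypotheses hold and $\int\gamma'(u)/u\,du\asymp\int\gamma(u)/u^{2}\,du$.

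With the same (derivative) form of the estimate, $U'(\theta)=\tfrac1\theta\big(1+\tfrac2\pi\gamma(\theta)/\theta\big)\asymp\tfrac1\theta$ because $\gamma(\theta)/\theta\to0$, so the harmonic–measure density on $\partial\Omega$ with respect to $d\theta$ equals the strip Poisson density $\asymp e^{U(\theta)}$ times $U'(\theta)$, hence is $\asymp\theta e^{-\Gamma(\theta)}\cdot\tfrac1\theta=e^{-\Gamma(\theta)}$. Integrating over $[\theta_j,\theta_{j+1}]$ and using that for $j<j_n$ one has $\gamma(\theta_j)\le\gamma(\theta_{j_n})\asymp 2^{-n}$, so that
$$
\Gamma(\theta_j)-\Gamma(\theta_{j+1})=\frac2\pi\int_{\theta_j}^{\theta_{j+1}}\frac{\gamma(s)}{s^{2}}\,ds\lesssim\frac{\gamma(\theta_j)}{\theta_j^{2}}\cdot\frac1{2^{n}}\lesssim\frac1{j^{2}}\le1,
$$
gives $\varpi(0,W_{n,j}\cap\partial\Omega,\Omega)\asymp\frac1{2^{n}}e^{-\Gamma(\theta_j)}\asymp\frac1{2^{n}}e^{-\Gamma(\theta_{j+1})}$, which is assertion (1). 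For assertion (2), this last displayed bound shows that the quantities $\varpi(0,W_{n,j}\cap\partial\Omega,\Omega)$ with $2^{k}\le j<2^{k+1}$ (all admissible when $2^{k+1}\le j_n$) are pairwise comparable, with ratios bounded independently of $n,k,j$. Since their number is $2^{k}$ and each is $\asymp\frac1{2^{n}}e^{-\Gamma(\theta_{j+1})}$, choosing $\eta>0$ below the uniform comparability constant forces a fixed positive proportion — hence $\asymp 2^{k}$ of them — to exceed $\frac{\eta}{2^{n}}e^{-\Gamma(\theta_{j+1})}$, which is the second assertion.
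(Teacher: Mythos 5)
Your argument is essentially the intended one: the paper itself gives no proof of this lemma but defers to \cite{EEN}, explicitly describing the method there as ``Ahlfors--Warschawski type estimates,'' which is exactly the strip reduction $z\mapsto\log(1-z)$, the longitudinal coordinate $U(\theta)=\log\theta-\Gamma(\theta)+O(1)$, and the resulting harmonic-measure density $\asymp e^{-\Gamma(\theta)}$ that you carry out. You also correctly identify the two load-bearing points -- that \eqref{g3} with $\beta>1/2$ is what makes the Warschawski error bounded, and that the \emph{derivative} form of Warschawski's theorem (not just the integrated boundary correspondence, whose $O(1)$ errors would swamp the $1/j$-size increments) is needed to localize the estimate to a single box -- so the proposal is correct and takes the same route as the cited source.
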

Now, we are able to prove the following estimates

\begin{thm} \label {univalent}  Let $\gamma, \Omega $ and $\varphi$ as above. Let $h:\ [0,+\infty) \to [0,+\infty)$ be an increasing function such that $h(0) =0$. Suppose that there exists $p\geq 1$ such that $h(t^p)$ is convex. Let  $\alpha >0$, we have
$$ 
B\displaystyle \int _0^1 \frac{h(be ^{-\alpha \Gamma (s)})}{\gamma (s)}ds \leq \displaystyle \sum _{n}h\left (s^2 _n(C_\varphi , \cH _\alpha ) \right ) \leq  A\displaystyle \int _0^1 \frac{h(ae ^{-\alpha\Gamma (s)})}{\gamma (s)}ds,
$$
 where $A, B, a, b >0$ depend on $\alpha$  and $p$.
\end{thm}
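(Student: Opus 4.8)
The plan is to start from Theorem \ref{comp}, which already reduces the trace sum $\sum_n h(s_n^2(C_\varphi,\cH_\alpha))$ to a two-sided comparison with the dyadic double sum $\sum_{n,j} h\bigl(c\,(2^n m_\varphi(W_{n,j}))^\alpha\bigr)$, the constant $c$ being absorbed into the final $a,b$. The first step is to rewrite $m_\varphi(W_{n,j})$ geometrically: since $\varphi$ is conformal from $\DD$ onto $\Omega$ with $\varphi(0)=0$, its boundary values lie a.e. on $\partial\Omega$, so by the conformal invariance of harmonic measure recalled in Subsection \ref{point} we have $m_\varphi(W_{n,j}) = \varpi(0, W_{n,j}\cap\partial\Omega, \Omega)$. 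In particular this vanishes whenever $W_{n,j}$ misses $\partial\Omega$, i.e. (up to the symmetry $\theta\mapsto-\theta$, which only contributes a bounded factor) whenever $j\geq j_n = \frac{2^n}{2\pi}\gamma^{-1}(2\pi/2^n)$; thus only the boxes with $0\leq j<j_n$ enter the sum.

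For the upper bound I would insert the pointwise estimate of Lemma \ref{esthar}, namely $2^n m_\varphi(W_{n,j}) \lesssim e^{-\Gamma(2\pi(j+1)/2^n)}$ for $0\leq j<j_n$; monotonicity of $h$ then yields
$$\sum_n h\bigl(s_n^2(C_\varphi,\cH_\alpha)\bigr) \leq \sum_{n\geq1}\ \sum_{0\leq j<j_n} h\bigl(a\,e^{-\alpha\Gamma(2\pi(j+1)/2^n)}\bigr).$$
For the lower bound I would instead keep, inside each dyadic block $j\in\{2^k,\dots,2^{k+1}-1\}$ with $2^{k+1}\leq j_n$, only the $\asymp 2^k$ indices for which the second assertion of Lemma \ref{esthar} gives $2^n m_\varphi(W_{n,j})\gtrsim\eta\,e^{-\Gamma(2\pi(j+1)/2^n)}$, producing a matching lower double sum.

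The heart of the argument is converting these double sums into $\int_0^1 h(ae^{-\alpha\Gamma(s)})/\gamma(s)\,ds$. Writing $s=2\pi(j+1)/2^n$, the inner $j$-sum at level $n$ is a Riemann sum of mesh $\Delta s=2\pi/2^n$ of the \emph{monotone} function $g(s):=h(ae^{-\alpha\Gamma(s)})$ (monotone because $\Gamma$ is decreasing), so that
$$\sum_{0\leq j<j_n} g\Bigl(\tfrac{2\pi(j+1)}{2^n}\Bigr) \asymp \frac{2^n}{2\pi}\int_0^{\theta_n} g(s)\,ds,\qquad \theta_n:=\gamma^{-1}(2\pi/2^n).$$
Summing over $n$ and exchanging the order of summation, the total weight attached to $g(s)$ is $\sum_{n:\,\theta_n>s} 2^n$; since $\theta_n>s\iff 2^n<2\pi/\gamma(s)$, this geometric sum is dominated by its largest term and equals $\asymp 1/\gamma(s)$, which reconstitutes exactly the density $ds/\gamma(s)$ and gives the claimed two-sided bound.

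The main obstacle is making the lower bound genuinely two-sided, where two points require care. First, the Riemann-sum comparison is only clean when $g$ does not oscillate across a dyadic angular block, so I would show $\Gamma$ is essentially constant on each block $[2\pi 2^k/2^n,\,2\pi 2^{k+1}/2^n)$: by (\ref{g1}) the ratio $\gamma(u)/u$ is increasing, whence $\Gamma(s)-\Gamma(2s)=\frac{2}{\pi}\int_s^{2s}\gamma(u)u^{-2}\,du\lesssim \gamma(2s)/(2s)$, and (\ref{g3}) forces this bound to tend to $0$; thus $e^{-\alpha\Gamma}$ varies by only a bounded factor on each block, so the $\asymp 2^k$ surviving indices of Lemma \ref{esthar} contribute $\asymp 2^k g(s)$, matching $\frac{2^n}{2\pi}\int_{\mathrm{block}}g$. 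Second, one must check that the boundary terms of the Riemann sums (the single extra box of width $2\pi/2^n$ near $\theta_n$) are absorbed and that the geometric summation is uniformly controlled in $s$; both are routine once the block estimate is in place.
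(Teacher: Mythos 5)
Your proposal follows essentially the same route as the paper's own proof: reduce via Theorem \ref{comp} and both assertions of Lemma \ref{esthar} to the double sum $\sum_{n}\sum_{j<j_n} h\bigl(Ce^{-\alpha\Gamma(2\pi(j+1)/2^n)}\bigr)$, observe that $\Gamma$ varies by $O(1)$ over each dyadic angular interval (the paper's remark that $\int \gamma(s)s^{-2}\,ds\lesssim\int s^{-1}\,ds=O(1)$ on such an interval), replace the inner sum by $\tfrac{2^n}{2\pi}\int_0^{\gamma^{-1}(2\pi/2^n)}h(Ce^{-\alpha\Gamma(s)})\,ds$, and then exchange the order of summation so that the geometric weight $\sum_{2^n<2\pi/\gamma(s)}2^n\asymp 1/\gamma(s)$ produces the density $ds/\gamma(s)$. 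The argument is correct; your extra care with the lower bound (retaining the $\asymp 2^k$ boxes per block from the second part of Lemma \ref{esthar}) only makes explicit what the paper leaves to ``similar computations.''
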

\begin{proof}
By Theorem \ref {comp} and Lemma \ref {esthar}, it suffices to prove that
$$  \displaystyle \int_{0}^{1}\dfrac{h\left(\frac {C}{A}e^{-\alpha \Gamma(s)}\right)}{\gamma(s)}\,ds \lesssim  \displaystyle \sum_{n = 1}^\infty \displaystyle \sum_{j = 0}^{j_n} h\left(  C  \exp \left[ -\alpha \Gamma\left ( \frac{2\pi (j+1)}{2^n} \right)  \right]  \right) \lesssim \displaystyle \int_{0}^{1}\dfrac{h\left(ACe^{-\alpha \Gamma(s)}\right)}{\gamma(s)}\,ds.$$
First, remark that  
\begin{equation}
\displaystyle \int _{2\pi j/2^n}^{2\pi (j+1)/2^n}\dfrac{\gamma (s)}{s^2}ds\lesssim \displaystyle \int _{2\pi j/2^n}^{2\pi (j+1)/2^n}\dfrac{1}{s}ds= O(1).
\end{equation}  
So, there exists $A>0$ such that 
$$\frac{1}{A}e^{-\alpha \Gamma (2\pi j/2^n)} \leq e^{-\alpha \Gamma (s)} \leq Ae^{-\alpha \Gamma (2\pi j/2^n)}, \qquad s\in \left ( \frac{2\pi j} {2^n}, \frac{2 \pi (j +1)}  {2^n} \right ).$$
Then

$$h\left (Ce^{-\alpha \Gamma(\frac{2\pi j }{2^n})}\right )  \lesssim 2^n \int_{\frac{2j\pi}{2^n}}^{\frac{2(j+1)\pi}{2^n}}h\left ( Ce^{-\alpha \Gamma(s)}\right )ds \lesssim h\left ( ACe^{-\alpha \Gamma(\frac{2\pi (j+1) }{2^n})}\right ). $$
\begin{align*}
 \displaystyle \sum _0^{\infty} \displaystyle \sum_{j=1}^{j_n} h\left(C_1e^{-\alpha \Gamma((\frac{2\pi j}{2^n})}\right)& \geq   \displaystyle \sum _0^{\infty} \displaystyle \sum_{j=1}^{j_n} 2^n  \int_{\frac{2\pi (j-1)}{2^n}}^{\frac{2\pi j}{2^n}}  h\left(\frac{C_1}{A}e^{-\alpha \Gamma(s)}\right)\,ds\\
& \asymp \displaystyle \sum _0^\infty 2^n \int_{0}^{ \gamma ^{-1}(2^{-n})}  h\left(\frac{C_1}{A}e^{- \alpha \Gamma(s)}\right)\,ds\\
& \asymp \displaystyle C \sum _{n=0}^{\infty}2^n\sum _{k=n}^{\infty}  \int_{\gamma^{-1}(2^{-k-1})}^{ \gamma ^{-1}(2^{-k})}  h\left(\frac{C_1}{A}e^{-\alpha \Gamma(s)}\right)\,ds \\
& \asymp \displaystyle \sum _{k=0}^{\infty}2^k \int_{\gamma^{-1}(2^{-k-1})}^{ \gamma ^{-1}(2^{-k})}  h\left(\frac{C_1}{A}e^{-\alpha \Gamma(s)}\right)\,ds\\& \asymp    \int_{0}^{1}\dfrac{h\left(\frac{C_1}{A}e^{-\alpha \Gamma(s)}\right)}{\gamma(s)}\,ds.
\end{align*}
This proves the first inequality.\\
The second inequality can be obtained using similar computations.\\
\end{proof}

\begin{proof}[Proof of \text{Theorem D}]

The first assertion is a direct consequence of the characterization of membership to $p-$Schatten classe given in \cite {EEN}. Indeed, suppose that $\displaystyle \lim _{t\to 0^+}\frac{\gamma(t)\log (1/t)}{t} =\infty $. It is easy to verify that
$$
\displaystyle \int _0 \frac{e^{-\frac{p\alpha}{2}\Gamma(t)}}{\gamma (t)}dt <\infty,\quad (\forall \ p>0).
$$
Then $C_\varphi \in \displaystyle \cap _{p>0}\cS _p(\cH _\alpha)$. This is equivalent to $s_n(C_\varphi,\cH _\alpha) = O(1/n^A) $ for all $A>0$.\\
To prove the second assertion, let 
$$\rho (x) = \exp \left \{ \alpha \Gamma (\Lambda ^{-1}(x) )\right \}, \quad  \mbox{where}\ \Lambda (t) = \displaystyle \int _{t}^2\frac{ds}{\gamma (s)}.$$
First, we prove that $\rho (x)/x^A$ is decreasing, where $A$ is such that $\gamma (t) \leq \frac{\pi A }{2\alpha }\frac {t} {\log (1/t)}$. Since $\gamma (t)/t$ is increasing, we have
$$
\Lambda (t) = \displaystyle \int _t^2\frac{dt}{\gamma(t)} \leq \frac {t} {\gamma (t)}\log (1/t) \leq \frac{\pi A }{2\alpha }\frac {t^2} {\gamma (t)^2}.
$$
This implies that $t\to \Lambda (t) \exp (-\frac {\alpha}{A}\Gamma (t))$ is decreasing and then $\rho (x)/x^A$ is decreasing.\\
 Note also that if $h$ an increasing positive function, then
$$ \displaystyle \int _0^1 \frac{h(Ce ^{-\alpha\Gamma (s)})}{\gamma (t)}dt \asymp \sum_{n\geq 1}h\left (C e^{ -\alpha \Gamma (x_n) }\right )   \displaystyle \int _{x_{n+1}}^{x_n}\frac{dt}{\gamma (t)}= \sum_{n\geq 1}h\left (C e^{ -\alpha \Gamma (x_n) }\right ).$$
Then, by Theorem \ref {univalent} and Lemma \ref{A1} we obtain the result.

\end{proof}


\section{Concluding Remarks}
\subsection{Composition operators on the Hardy space}
The Hardy space $H^2$ is equal to $\cH_1$.  The problem of estimating the singular values of composition operators on $H^2$ was considered in several papers (\cite {LLQRJFA, LLQRArkiv, LLQRA,QS}). Using the same arguments as those given in section \ref{US}, one can remove the condition that $\varphi$ is univalent in Corollary \ref {compcoro}. We have the following result.
\begin{thm}\label {1Hardy}
 Let $\varphi $ be an analytic self map of $\DD$ Let $\rho : [1,+\infty) \to (0,+\infty[$ be an increasing function such that $\rho (x)/x^A$ is decreasing for some $A>0$. Then 
$$s_n(C_\varphi, H^2) \asymp 1/\rho (n) \quad \iff  \quad m_n(\varphi) \asymp 1/\rho ^2 (n).$$
\end{thm}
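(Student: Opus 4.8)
The plan is to recover the univalent result Corollary~\ref{compcoro} in the case $\alpha=1$ without the univalence hypothesis, by isolating the single place in Section~\ref{US} where univalence was actually used. Since $H^2=\cH_1$, we work with $\alpha=1$ throughout, and we may assume $\varphi(0)=0$: the Littlewood--Paley identity writes $\|C_\varphi f\|_{H^2}^2=|f(\varphi(0))|^2+\int_\DD|f'|^2 N_\varphi\,dA$, so $C_\varphi^{\ast}C_\varphi$ is a rank-one perturbation of the operator handled by Proposition~\ref{CT}; a rank-one perturbation shifts eigenvalue indices by at most one, and since $\rho(n+1)\asymp\rho(n)$ (because $\rho$ is increasing with $\rho(x)/x^A$ decreasing), this does not affect an asymptotic of the form $\asymp 1/\rho^2(n)$.

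The key observation is that for $\omega=\omega_1$ one has $\omega_1^2(z)=2(1-|z|^2)$, whence
$$
N_{\varphi,1}(w)=\sum_{z\in\varphi^{-1}(w)}\omega_1^2(z)=2\sum_{z\in\varphi^{-1}(w)}(1-|z|^2)\asymp N_\varphi(w),
$$
the last comparison being the standard $\log(1/|z|)\asymp 1-|z|^2$ near $\TT$. This relation is additive over the fibre $\varphi^{-1}(w)$ and therefore holds for every analytic self map, with no univalence assumption. By contrast, in the proof of Lemma~\ref{lempull} univalence entered exactly through the identity $N_{\varphi,\alpha}=N_\varphi^\alpha$, which for $\alpha>1$ is valid only when $\varphi^{-1}(w)$ is a single point; for $\alpha=1$ it is replaced by the univalence-free comparison above.

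With this substitution I would re-run the chain Theorem~\ref{Tcomp} $\to$ Lemma~\ref{cb} $\to$ Lemma~\ref{lempull} $\to$ Theorem~\ref{comp} with $\alpha=1$ for an arbitrary symbol. Proposition~\ref{CT} and Theorem~\ref{Tcomp} do not use univalence, Lemma~\ref{cb} is purely combinatorial, and the two pull-back estimates feeding Lemma~\ref{lempull}, namely \eqref{LLQP} (from Theorem~\ref{LLQR}) and the Hardy--Littlewood bound of Lemma~\ref{HL}, are stated for general $\varphi$. Using $N_{\varphi,1}\asymp N_\varphi$ in place of $N_{\varphi,\alpha}=N_\varphi^\alpha$ then yields, for every analytic self map $\varphi$ with $\varphi(0)=0$ and every increasing $h$ with $h(0)=0$ and $h(t^p)$ convex for some $p\ge1$,
$$
\sum_{n,j}h\!\left(\tfrac1B\,2^{n}m_\varphi(W_{n,j})\right)\le\sum_n h\!\left(s_n^2(C_\varphi,H^2)\right)\le\sum_{n,j}h\!\left(B\,2^{n}m_\varphi(W_{n,j})\right),
$$
that is, Theorem~\ref{comp} with $\alpha=1$ but no univalence. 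Specialising $h=h_{\beta,\delta}$ with $\beta$ small enough that $2A<1/\beta$ and $p=1/\beta$, this is exactly the two-sided hypothesis of Lemma~\ref{A1} applied to $a_n=s_n^2(C_\varphi,H^2)$ and to $m_n(\varphi)$ against the profile $1/\rho^2$. Lemma~\ref{A1} then gives the equivalence $s_n^2(C_\varphi,H^2)\asymp1/\rho^2(n)\iff m_n(\varphi)\asymp1/\rho^2(n)$, i.e. the claim, exactly as in Corollary~\ref{compcoro}.

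The main obstacle is not analytic but a careful audit: one must verify that throughout Section~\ref{US} univalence is invoked \emph{only} via $N_{\varphi,\alpha}=N_\varphi^\alpha$, and that for $\alpha=1$ the replacement by $N_{\varphi,1}\asymp N_\varphi$ preserves every intermediate comparison. The one spot needing genuine care is the right-hand inequality of Lemma~\ref{lempull}, where Lemma~\ref{HL} produces $\int_{W^\kappa}N_\varphi\,dA$ and one must match it to $\mu_{\varphi,1}(W^\kappa)/A(W^\kappa)$; here the weight $(1-|z|^2)^{-1}$ hidden in $\mu_{\varphi,1}$ must be balanced against the dimensions of the Carleson box, just as the $(1-|z|^2)^{-\alpha}$ factor was in the univalent computation.
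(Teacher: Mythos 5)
Your proposal is correct and follows exactly the route the paper itself indicates (the paper merely remarks that "the same arguments as in Section 7" apply once univalence is dropped for $\alpha=1$): you correctly identify that univalence enters only through the identity $N_{\varphi,\alpha}=N_\varphi^\alpha$ in Lemma \ref{lempull}, and that for $\alpha=1$ the fibre-additive comparison $N_{\varphi,1}\asymp N_\varphi$ (valid away from $\varphi(0)$ by Schwarz's lemma) replaces it for arbitrary symbols, after which Theorem \ref{Tcomp}, Lemma \ref{cb}, Theorem \ref{LLQR}, Lemma \ref{HL} and Lemma \ref{A1} go through unchanged. Your rank-one-perturbation reduction to $\varphi(0)=0$ is also a valid (and slightly more careful) substitute for the paper's automorphism trick, which would require $0\in\varphi(\DD)$.
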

Note that our method can also be applied to composition operators with outer symbol. Such composition operators was considered in \cite{LLQRJFA, BEKM, QS}. Namely, let $\varphi $ be the outer function given by 
\begin{equation}\label {phi_U}
\varphi (z) = \exp \left ({-\displaystyle \int _\TT \frac{e^{it}-z}{e^{it}+z}U(|t|)}\frac{dt}{2\pi}\right ),
\end{equation}
where $U: [0,\pi] \to [0,\infty)$ is an incresing  integrable function such that $U(0)=0$. It is proved in \cite {BEKM, QS}, under some regularity conditions on $U$, that $C_\varphi $ is compact if and only if 
$$
\displaystyle \int _0^1\frac{U(s)}{s^2}ds=+\infty.
$$
It is also proved in \cite {BEKM} that $C_\varphi \in  S_p(H^2)$ if and only if 
$$
\displaystyle \int _0^1\frac{dt}{U(t)q^{\frac{p}{2}-1}_U(t)}<\infty,
$$
where $q_U(t) = \displaystyle \int _t^1\frac{U(s)}{s^2}ds$.\\
One can extends this result. Namely, in accordance with \cite{BEKM}, we say that $U$ is admissible if $U$ is concave or convex and if $ U(t) \asymp U(2t) \asymp tU'(t)$. We have 
\begin{thm} \label {GBEKM}
Let $U$ be an admissible function  such that $t^2=o(U(t))$ and 
$$
U(t ) = \left (  t \displaystyle \int _t ^{\pi} \frac{U(s)}{s^2}ds \right )\ \  (t\to 0^+).
$$

Let $h$ be an increasing function such that $h(0)=0$. Suppose that there exists $p \geq 1$ such that $h(t^p)$ and $h^p$ are convex. We have
$$
B\displaystyle \int _0^1h\left (\frac{b}{q_U(t)}\right )\frac{q_U(t)}{U(t)}dt\leq \displaystyle \sum _{n}h\left (s^2 _n(C_\varphi , H^2) \right ) \leq  A \displaystyle \int _0^1h\left (\frac{a}{q_U(t)}\right )\frac{q_U(t)}{U(t)}dt,
$$
 where $\varphi $ is given by (\ref {phi_U}) and $A, B, a, b >0$ depend on $p$.
\end{thm}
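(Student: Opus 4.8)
The plan is to run the same three–stage argument used for Theorem \ref{univalent}, replacing the conformal/harmonic–measure input by a direct estimate of the pull‑back measure of the outer symbol. Since $H^2=\cH_1$, the first stage is the reduction to a sum over Carleson boxes. Applying the composition–operator trace estimate (Theorem \ref{Tcomp}) in $\cH_1$, that is with $\omega=\omega_1$ and space parameter $\alpha=1$, and then Lemma \ref{cb} to pass from the dyadic squares $R_{n,j}$ to the Carleson windows $W_{n,j}$ (this is where $h(t^p)$ convex is used), reduces matters to comparing $\sum_n h\!\left(s_n^2(C_\varphi,H^2)\right)$ with $\sum_{n,j} h\!\left(C\,\mu_{\varphi,1}(W_{n,j})/A(W_{n,j})\right)$. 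Because $\varphi$ need not be univalent here, I would then invoke Theorem \ref{LLQR} together with the Hardy--Littlewood estimate of Lemma \ref{HL} (both valid for arbitrary analytic self‑maps, as in the remark preceding Theorem \ref{1Hardy}) to replace, up to multiplicative constants on each side, the Nevanlinna averages $\mu_{\varphi,1}(W_{n,j})/A(W_{n,j})$ by $2^n m_\varphi(W_{n,j})$. The second convexity hypothesis, that $h^p$ is convex, appears to be exactly what makes this two‑sided comparison survive the overlaps produced by a non‑injective $\varphi$; in the univalent Lemma \ref{lempull} the identity $N_{\varphi,\alpha}=N_\varphi^\alpha$ rendered it unnecessary. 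After this stage it remains to prove
\[ \sum_{n,j} h\!\left(C\,2^n m_\varphi(W_{n,j})\right) \asymp \int_0^1 h\!\left(\frac{C'}{q_U(t)}\right)\frac{q_U(t)}{U(t)}\,dt. \]

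The heart of the proof is the second stage: an estimate of $m_\varphi(W_{n,j})$ playing the role that Lemma \ref{esthar} played in the univalent case. Here I would use the boundary description of the outer function \eqref{phi_U}: on $\TT$ one has $|\varphi(e^{i\theta})|=e^{-U(|\theta|)}$, so that $1-|\varphi(e^{i\theta})|\asymp U(|\theta|)$ for small $|\theta|$, while $\arg\varphi(e^{i\theta})$ is the harmonic conjugate of $U(|\cdot|)$. The admissibility conditions $U(t)\asymp U(2t)\asymp tU'(t)$ and the self‑similarity condition $U(t)\asymp t\int_t^\pi U(s)s^{-2}\,ds=t\,q_U(t)$ are precisely what is needed to control this conjugate function and to match the radial scale $U(|\theta|)$ with the angular scale $\arg\varphi(e^{i\theta})$. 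Carrying this out (along the lines developed in \cite{BEKM, QS}) yields, for the boxes that actually carry pull‑back mass at depth level $n$, a two‑sided estimate $2^n m_\varphi(W_{n,j})\asymp 1/q_U(t_{n,j})$ at the appropriate scale $t_{n,j}$, together with a counting estimate bounding above and below the number of indices $j$ whose value is comparable to $1/q_U$ — the analogue of the two bullets of Lemma \ref{esthar}.

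The third stage is the sum‑to‑integral conversion, which I would carry out exactly as in the proof of Theorem \ref{univalent}: at each fixed depth $n$ the inner sum over $j$ is comparable to $2^n\int h(C/q_U(s))\,ds$ over the relevant arc, and summing over $n$ and telescoping (using $\sum_{n\le k}2^n\asymp 2^k$ together with $2^k\asymp 1/U(t)$ on the corresponding range of $t$) produces the weight $q_U(t)/U(t)$, giving the displayed integral. Combining the three stages, with $C$ absorbed into the constants $a,b,A,B$, completes the proof.

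I expect the second stage to be the main obstacle. In the univalent setting the pull‑back measure \emph{is} a harmonic measure and decays exponentially into the channel, so Ahlfors--Warschawski estimates apply directly; for a general outer symbol $\varphi$ is not injective, the boundary curve $\{\varphi(e^{i\theta})\}$ may wind, and the decay is only polynomial in $q_U$. Replacing the conformal input by sharp two‑sided control of the conjugate function $\arg\varphi(e^{i\theta})$ — and showing that the pull‑back mass is genuinely spread over $\asymp q_U(t)/U(t)$ boxes per scale rather than concentrated in a few — is the delicate point, and is exactly where the admissibility and self‑similarity hypotheses on $U$ are indispensable.
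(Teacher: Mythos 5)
First, a caveat: the paper states Theorem \ref{GBEKM} in its concluding remarks \emph{without proof}, so there is no in-paper argument to compare yours against; your proposal has to stand on its own. Its architecture is the right one and follows the only template the paper offers, namely the proof of Theorem \ref{univalent}: reduce to a sum over Carleson boxes via Theorem \ref{Tcomp} and Lemma \ref{cb}, replace the Nevanlinna averages $\mu_{\varphi,1}(W_{n,j})/A(W_{n,j})$ by $2^{n}m_\varphi(W_{n,j})$ using Theorem \ref{LLQR} and Lemma \ref{HL} (both valid for arbitrary self-maps once one notes $N_{\varphi,1}\asymp N_\varphi$ and performs the comparison on the dyadic squares $R_{n,j}$, where $1-|z|^2\asymp 2^{-n}$), and finally convert the double sum into the integral against $q_U(t)\,dt/U(t)$. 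Stages one and three are sound, and your identification of the target profile --- $2^{n}m_\varphi(W_{n,j})\asymp 1/q_U(t)$ on roughly $t\,q_U(t)/U(t)$ boxes at the level $n$ determined by $U(t)\asymp 2^{-n}$ --- is exactly what reproduces the stated integral and is consistent with the Schatten-class criterion of \cite{BEKM} upon taking $h(x)=x^{p/2}$.

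The genuine gap is stage two. Everything in the theorem beyond the general machinery of Sections 4, 6 and 7 is the two-sided estimate of the pull-back measure of the outer symbol \eqref{phi_U} --- the analogue of Lemma \ref{esthar} --- and you do not prove it; you defer it to ``along the lines of \cite{BEKM, QS}''. In particular, the lower bound requires showing that at each level $n$ a definite proportion (comparable to $t\,q_U(t)/U(t)$) of the windows $W_{n,j}$ \emph{individually} carry mass $\gtrsim U(t)/q_U(t)$, not merely that their union does; this is where the two-sided control of the conjugate function $\widetilde U$ (the estimates $\widetilde U(\theta)\asymp \theta q_U(\theta)$ and $(\widetilde U)'(\theta)\asymp q_U(\theta)$ under admissibility, combined with $U(t)=o(t\,q_U(t))$ so that the angular displacement of $\varphi(e^{i\theta})$ dominates its radial one) must actually be carried out. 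Relatedly, your guess that the hypothesis ``$h^p$ convex'' is what compensates for non-injectivity in the comparison of $\mu_{\varphi,1}$ with $m_\varphi$ is not convincing: that comparison is already two-sided for arbitrary self-maps by Theorem \ref{LLQR} and Lemma \ref{HL} and needs only the convexity of $h(t^p)$, exactly as in Lemma \ref{cb} and Lemma \ref{lempull}; the extra hypothesis is more plausibly needed in the counting and lower-bound step just described, where mass estimates on groups of windows must be converted into lower bounds for $\sum_j h\left(2^{n}m_\varphi(W_{n,j})\right)$. Until that central estimate is written out, the proposal is a plan rather than a proof.
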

In \cite {QS}, H. Queffelec and K. Seip give some estimates of the singular values of such composition operators. They proved that if $U$ is sufficiently regular and $q_U(t) = O(\log ^\gamma \log (1/t))$ for some $\gamma >0$, then 
$$
s_n(C_\varphi,H^2) \asymp \frac{1}{ \sqrt{q_U(e^{-\sqrt{n}})}}.
$$
The extremal decreasing case corresponds to  $q_U(t) = \log ^\gamma \log (1/t)$, they obtained that 
$$
s_n(C_\varphi, H^2) \asymp \frac{1}{\log ^{\gamma /2}n}.$$

Using Theorem \ref{GBEKM} and Lemma \ref {A1}, we extend this result as follows
\begin{thm}
Under the same hypothesis of Theorem \ref {GBEKM}. Suppose that $\displaystyle \int _0^1\frac{U(s)}{s^2}ds=+\infty.
$. We have
\begin{enumerate}
\item If $\displaystyle  \lim _{t\to 0^+}\frac{ \log q_U(t)}{\log \log 1/t} = \infty$, then 
$$s_n(C_\varphi, H^2) = O(1/n^A)\quad (\mbox{for all} \ A>0).$$
\item If $\displaystyle \frac{ \log q_U(t)}{\log \log 1/t}  = O(1)$, then 
$$
s_n(C_\varphi, H^2) \asymp \frac{1}{\sqrt{q_U(x_n)}},
$$
where $x_n$ is given by 
$$\displaystyle \int _{x_n}^\pi\frac{q_U(t)}{U(t)}dt =n.$$
\end{enumerate}
\end{thm}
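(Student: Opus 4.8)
The plan is to follow the template of the proof of Theorem D, replacing the profile $e^{-\alpha\Gamma(t)}$ by $1/q_U(t)$ and the density $1/\gamma(t)$ by $q_U(t)/U(t)$, which are precisely the quantities appearing in the trace estimate of Theorem \ref{GBEKM}. Accordingly, I would first set
$$
\Lambda(t)=\int_t^\pi\frac{q_U(s)}{U(s)}\,ds,\qquad x_n=\Lambda^{-1}(n),\qquad \rho(x)=q_U\bigl(\Lambda^{-1}(x)\bigr),
$$
so that $\rho(n)=q_U(x_n)$, the function $\rho$ is increasing (a composition of two decreasing maps), and the asserted estimate $s_n\asymp 1/\sqrt{q_U(x_n)}$ reads $s_n^2(C_\varphi,H^2)\asymp 1/\rho(n)$. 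The argument rests on two ingredients: a discretization identity
$$
\int_0^1 h\!\left(\frac{C}{q_U(t)}\right)\frac{q_U(t)}{U(t)}\,dt\ \asymp\ \sum_{n\ge1}h\!\left(\frac{C}{\rho(n)}\right),
$$
obtained by splitting $(0,1)$ along the $x_n$ so that $\int_{x_{n+1}}^{x_n}\frac{q_U}{U}=1$ and then using the slow variation $q_U(x_{n+1})\asymp q_U(x_n)$; and the Schatten characterization of \cite{BEKM}, namely $C_\varphi\in S_p(H^2)$ iff $\int_0^1 \frac{dt}{U(t)q_U(t)^{p/2-1}}<\infty$.

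For part (1) I would use the Schatten criterion directly. Writing $\frac{dt}{U(t)q_U(t)^{p/2-1}}=q_U(t)^{-p/2}\,\frac{q_U(t)}{U(t)}\,dt$ and carrying out the change of variables $x=\Lambda(t)$ (so that $\frac{q_U(t)}{U(t)}\,dt=-d\Lambda$ and $q_U(t)=\rho(x)$) turns the integral into $\int^{\infty}\rho(x)^{-p/2}\,dx$. The hypothesis $\log q_U(t)/\log\log(1/t)\to\infty$ forces $\rho$ to grow faster than every power of $x$, so that $\int^\infty\rho(x)^{-p/2}\,dx<\infty$ for every $p>0$. Hence $C_\varphi\in\bigcap_{p>0}S_p(H^2)$, which is equivalent to $s_n(C_\varphi,H^2)=O(1/n^A)$ for all $A>0$.

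For part (2) I would combine Theorem \ref{GBEKM}, the discretization identity and Lemma \ref{A1}. First one checks that $\rho(x)/x^A$ is decreasing for a suitable $A>0$: as in Theorem D this amounts to the monotonicity in $t$ of $q_U(t)/\Lambda(t)^A$, and differentiating reduces it to $\Lambda(t)\le A\bigl(t\,q_U(t)/U(t)\bigr)^2$. Using that $s\mapsto s\,q_U(s)/U(s)$ is decreasing one gets $\Lambda(t)\lesssim \frac{t\,q_U(t)}{U(t)}\log(1/t)$, and the hypothesis $\log q_U(t)/\log\log(1/t)=O(1)$ is exactly what guarantees $\frac{U(t)\log(1/t)}{t\,q_U(t)}=O(1)$, hence the required bound with some finite $A$. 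Choosing then $\beta\in(0,1]$ with $\beta<1/A$ and applying Theorem \ref{GBEKM} to $h=h_{\beta,\delta}$ (for which $h_{\beta,\delta}(t^{1/\beta})$ and $h_{\beta,\delta}^{1/\beta}$ are convex) together with the discretization gives, for every $\delta\in(0,1)$,
$$
\sum_{n}h_{\beta,\delta}\!\left(\frac{1}{B\rho(n)}\right)\le\sum_n h_{\beta,\delta}\!\left(s_n^2(C_\varphi,H^2)\right)\le\sum_n h_{\beta,\delta}\!\left(\frac{B}{\rho(n)}\right).
$$
Lemma \ref{A1}, applied with this $\beta$ and with the exponent $A<1/\beta$, then yields $s_n^2\asymp 1/\rho(n)=1/q_U(x_n)$, which is the claim.

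The main obstacle is the regularity bookkeeping: establishing the slow variation $q_U(x_{n+1})\asymp q_U(x_n)$ needed for the discretization, the monotonicity of $s\mapsto s\,q_U(s)/U(s)$, and the super-polynomial growth of $\rho$ in part (1). All three must be extracted from the admissibility of $U$ (in particular $U(t)\asymp tU'(t)$ and $U(t)=o(t\,q_U(t))$) together with the identity $q_U'(t)=-U(t)/t^2$; translating the \emph{size} condition on $q_U$ into the \emph{derivative} and growth statements for $\rho$ and $\Lambda$ is the only genuinely delicate point, the remainder being a transcription of the proof of Theorem D.
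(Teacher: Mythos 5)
Your proposal follows exactly the route the paper intends: the paper offers no separate proof of this theorem beyond the citation of Theorem \ref{GBEKM} and Lemma \ref{A1}, and the implicit argument is precisely the proof of Theorem D with $e^{-\alpha\Gamma}$ replaced by $1/q_U$ and $1/\gamma$ by $q_U/U$, which is what you carry out. Your reductions are correct: the discretization along the points $x_n$, the use of the Schatten criterion of \cite{BEKM} for part (1), and the reduction of the monotonicity of $\rho(x)/x^A$ to the inequality $\Lambda(t)\le A\bigl(t\,q_U(t)/U(t)\bigr)^2$ all match the template of Theorem D. The one point you flag but do not settle is genuinely the crux: in Theorem D the case hypothesis is stated directly as $\gamma(t)\log(1/t)/t=O(1)$, whereas here the hypothesis $\log q_U(t)/\log\log(1/t)=O(1)$ is a condition on the \emph{integral} $q_U$ and must be converted into the pointwise bound $U(t)\log(1/t)\lesssim t\,q_U(t)$ (equivalently $q_U(t)\gtrsim \log(1/t)\,U(t)/t$), and similarly in part (1) into the super-polynomial growth of $\rho$. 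This conversion is immediate when $U$ is convex (since $U(s)/s$ is then increasing, $q_U(t)\ge \frac{U(t)}{t}\log(1/t)$ directly), but for concave $U$ it requires using the full admissibility package ($U(t)\asymp tU'(t)$, $U(t)\asymp U(2t)$ and global concavity) to rule out functions $q_U$ that are polylogarithmic in size yet have occasional steep relative growth; without this the inequality can fail for merely doubling, non-concave profiles. So the proof is structurally complete and identical in approach to the paper's, but the ``regularity bookkeeping'' you defer is not a triviality and is where the remaining mathematical content of the statement lives.
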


\subsection{Composition operators on the Dirichlet space}
The Dirichlet space, denoted by $\cD$, is given by 
$$
\cD (:= \cH _0)= \{ f \in H(\DD): \ f' \in  L^2(\DD, dA)\}.
$$
The Nevanlinna counting function $N_{\varphi ,0}$ induced by $\varphi$ and associated with $\cD$ is the counting function $n_\varphi$. That is 
$$
N_{\varphi , 0}(z)= n_\varphi (z) = \text{Card} \{ \varphi ^{-1}(z)\},\quad (z\in \DD).
$$
\indent In particular, if $\varphi$ is univalent then 
$$n_\varphi = \chi _{\Omega}\ \ \mbox{ and}\ \  d\mu_{\varphi, 0}=  \chi _{\varphi (\Omega)}dA, \quad (\Omega = \varphi (\DD)).$$

Let $\Omega, \gamma$ and $\varphi$ as before. The compactness and membership to schatten classes of $C_\varphi$ is studied in \cite {EEN}. Recall that $C_\varphi$ is compact on $\cD$ if and only if 
$$\displaystyle \lim_{t\to 0^+}\frac{\gamma (t)}{t} = \infty.$$
And $C_\varphi \in S_p$ if and only if 
$$
\displaystyle \int _0 \left ( \frac{t}{\gamma (t)}\right )^{p/2}\frac{\gamma '(t)}{\gamma (t)}dt<\infty.
$$
Using Corollary \ref {sComposition} and the discussion above, one can prove easily that if $\gamma (t)/t = O(\log ^\beta (1/t))$ for some $\beta >0$, then 
$$
s_n(C_\varphi) \asymp \sqrt{e^n\gamma ^{-1}(e^{-n})}.
$$

\bibliographystyle{amsplain}
\bibliography{bibl}

\end{document}